\newcommand{\gcl}{\text{cl}_{\text{g}}}
\newcommand{\tmcl}{\text{cl}_{\text{tm}}}
\newcommand*{\defeq}{\mathrel{\vcenter{\baselineskip0.5ex \lineskiplimit0pt
                     \hbox{\scriptsize.}\hbox{\scriptsize.}}}
                     =}
\declaretheorem[name=Theorem, refname={Theorem, Theorems}, numberwithin=section]{thm}
\declaretheorem[name=Lemma, refname={Lemma,Lemmas}, sibling=thm]{lem}
\declaretheorem[name=Corollary, sibling=thm]{cor}
\theoremstyle{definition}
\declaretheorem[name=Definition, refname={Definition,Definitions}, sibling=thm]{Def}
\title{Reducts of the Generic Digraph}
\date{}
\begin{document}
\maketitle

\vspace*{-20mm}
\begin{abstract}
The generic digraph $(D,E)$ is the unique countable homogeneous digraph that embeds all finite digraphs. In this paper, we determine the lattice of reducts of $(D,E)$, where a structure $\mathcal{M}$ is a reduct of $(D,E)$ if it has domain $D$ and all its $\emptyset$-definable relations are $\emptyset$-definable relations of $(D,E)$. As $(D,E)$ is $\aleph_0$-categorical, this is equivalent to determining the lattice of closed groups that lie in between Aut$(D,E)$ and Sym$(D)$.

\end{abstract}

This paper is a part of a large body of work concerning reducts of first-order structures, where $\mathcal{N}$ is said to be a reduct of $\mathcal{M}$ if all $\emptyset$-definable relations in $\mathcal{N}$ are $\emptyset$-definable in $\mathcal{M}$. Usually, the set-up is that one studies the reducts of some given structure $\mathcal{M}$. When the structure is $\aleph_0$-categorical, this is equivalent to studying the closed subgroups lying between Aut$(\mathcal{M})$ and Sym$(M)$.

The first results in this area were the classification of the reducts of $(\mathbb{Q},<)$ (\cite{cam76}) and of the random graph $\Gamma$ (\cite{tho91}). In \cite{tho96}, Thomas conjectured that all homogeneous structures in a finite relational language have only finitely many reducts. This question remains unsolved and continues to provide motivation for study.  More recent results include the classification of the reducts of $(\mathbb{Q},<,0)$ (\cite{jz08}), of the affine and projective spaces over $\mathbb{Q}$ (\cite{ks13}) and of the $\aleph_0$-dimensional vector space over prime fields (\cite{bks14}). 

A surprising development in this area is the connection with constraint satisfaction in complexity theory, by Bodirsky and Pinsker. This connection is made via clone theory in universal algebra. In order to analyse certain closed clones they developed a Ramsey-theoretic tool, named `canonical functions'. With further developments (\cite{bp11}, \cite{bpt13}), canonical functions now provide a powerful tool in studying reducts, for example, they were used to classify the reducts of the generic partial order (\cite{ppp+}) and of the generic ordered graph (\cite{bpp13}).

In this paper, we determine the lattice of reducts of the generic directed graph, which we denote by $(D,E)$.  For us, a directed graph (or digraph) means a set of vertices with directed edges between them, where we do \emph{not} allow an edge going in both directions.  The generic digraph is the unique countable homogeneous digraph that embeds all finite digraphs. `Homogeneous' means that every isomorphism $f: A \to B$, where $A,B \subset D$ are finite, can be extended to an automorphism of $(D,E)$.

We outline the structure of the paper.  In Section 1, we provide the necessary preliminary definitions and facts about the generic digraph and about reducts. We also comment on some notational conventions that we use. In Section 2, we define the reducts of the generic graph and provide the lattice, $\mathcal{L}$, that these reducts form. The main theorem is that this lattice $\mathcal{L}$ is the lattice of all the reducts of the generic digraph. In Section 3, we describe the reducts in some detail, establishing notation and important lemmas that are used in the rest of the paper. In Section 4, we show that $\mathcal{L}$ is indeed a sublattice of the lattice of reducts. In Section 5, we prove that $\mathcal{L}$ does contain all the reducts of $(D,E)$. The section starts by describing the information that is obtained from the known classifications of the random graph and the random tournament (\cite{ben97}).  We then give the background definitions and results on canonical functions at the start of Section 5.2, and we also carry out the  the combinatorial analysis of the canonical functions in this section. Section 5 ends by using the analysis to complete the proof of the main theorem. In Section 6, we provide a summary and some open questions.

\section{Preliminaries}
\subsection{Notational Conventions}
Structures are denoted by $\mathcal{M},\mathcal{N}$, and their domains are $M$ and $N$ respectively. Sym$(M)$ is the set of all bijections $M \to M$ and Aut$(\mathcal{M})$ is the set of all automorphisms of $\mathcal{M}$. Given a formula $\phi(x,y)$, we use $\phi^*(x,y)$ to denote the formula $\phi(y,x)$. $S(\mathcal{M})$ denotes the space of types of the theory of $\mathcal{M}$. If $f$ has domain $A$ and $\bar{a} \in A$, then $f(a_1,\ldots,a_n) \defeq (f(a_1),\ldots,f(a_n))$. If $\bar{a}$ and $\bar{b}$ are tuples of the same length $n$ we say $\bar{a}$ and $\bar{b}$ are isomorphic, and write $\bar{a} \cong \bar{b}$, to mean that the function $a_i \mapsto b_i$ for all $i$ such that $1 \leq i \leq n$ is an isomorphism.

There will be instances where we do not adhere to strictly correct notational usage, however, the meaning is always clear from the context. For example, we may write `$a \in (a_1,\ldots,a_n)$' instead of `$a=a_i$ for some $i$ such that $1 \leq i \leq n$'. Another example is that we sometimes use $c$ to represent the singleton set $\{ c \}$ containing it.

\subsection{The Generic Digraph}
\begin{Def} \begin{enumerate}[(i)]
\item A directed graph $(V,E)$ consists of a set $V$ and an irreflexive, antisymmetric relation $E \subseteq V^2$. $V$ represents the set of vertices and $E$ represents the set of directed edges, so if $(a,b) \in E$, we visualise it as an edge going out of $a$ and into $b$.  We abbreviate `directed graph' by `digraph'.
\item By an empty digraph we mean a digraph whose edge set is empty.
\item We say that a structure $\mathcal{M}$ is homogeneous if every isomorphism $f:A \to B$, where $A,B$ are finite substructures of $M$, can be extended to an automorphism of $\mathcal{M}$.
\item The generic digraph, which we denote by $(D,E)$, is the unique (up to isomorphism) countable homogeneous digraph that embeds all finite digraphs.
\item $N(x,y) \subset D^2$ will denote the non-edge relation of $(D,E)$, so $N(x,y) \defeq \neg E(x,y) \wedge \neg E^*(x,y)$.
\end{enumerate}
\end{Def}

The fact that the generic digraph exists and is unique follows from the theory of Fra\"{i}ss\'{e} limits and amalgamation classes, originally described in \cite{fra53}. Details and proofs can be found in \cite{hod97}.

The following lemma collects several useful properties of the generic digraph.

\begin{lem} \label{genericdigraph} \begin{enumerate}[(i)]
\item Th$((D,E))$ is $\aleph_0$-categorical and has quantifier elimination.
\item Let $\bar{a},\bar{b} \in D$. If tp$(\bar{a})=$ tp$(\bar{b})$, then there exists an automorphism mapping $\bar{a}$ to $\bar{b}$.
\item The generic digraph $(D,E)$ is the unique, up to isomorphism, countable digraph satisfying the following extension property: for all finite pairwise disjoint subsets $U,V,W \subset D$ there exists $x \in D\backslash (U \cup V \cup W)$ such that $(\forall u \in U) E(x,u)$, $(\forall v \in V) E(v,x)$ and $(\forall w \in W) N(x,w)$.
\item All countable digraphs can be embedded into the generic digraph.
\item Let $A \subseteq D$ and $B=A^c$. Then $(A,E|_A)$ or $(B,E|_B)$ is isomorphic to the generic digraph.
\end{enumerate}
\end{lem}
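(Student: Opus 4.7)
The plan is to handle the five parts in order, leveraging standard Fra\"iss\'e theory for (i)--(iv) and then using (iii) as the engine for (v). For part (i), the class of finite digraphs has the amalgamation property via free amalgamation: gluing two finite digraphs over a common substructure and adding no new edges preserves irreflexivity and antisymmetry. So $(D,E)$ is the Fra\"iss\'e limit of this class and is homogeneous. Ryll--Nardzewski then gives $\aleph_0$-categoricity, because in the finite relational language of digraphs there are only finitely many isomorphism types of $n$-element substructures, and by homogeneity these biject with the complete $n$-types. Quantifier elimination follows directly from homogeneity: two tuples with the same quantifier-free type induce isomorphic finite substructures, and homogeneity produces an automorphism sending one to the other, so they realise the same full type. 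Part (ii) is then an immediate restatement: equality of types gives equality of quantifier-free types, hence an isomorphism of finite substructures, which homogeneity extends to an automorphism.

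For part (iii), I would first verify the extension property in $(D,E)$ by a one-point amalgamation: given disjoint finite $U,V,W\subset D$, the structure obtained by adding a new vertex with the prescribed edges to $U\cup V\cup W$ is a valid finite digraph, and homogeneity realises it in $(D,E)$. Uniqueness is the usual back-and-forth: enumerate two countable digraphs satisfying the extension property and alternately extend a finite partial isomorphism, invoking the extension property on each side to realise the required relations between the new vertex and the image/preimage of the current finite domain. Part (iv) is the forth-only version of the same construction: enumerate the vertices $v_0, v_1, \ldots$ of an arbitrary countable digraph and inductively choose $f(v_n)\in D$ so that $f(v_n)$ relates to each previously chosen $f(v_i)$ exactly as $v_n$ relates to $v_i$; the extension property supplies a suitable $f(v_n)$ at each stage.

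The substantive part is (v), which I would prove contrapositively using the characterisation from (iii). Suppose $(A, E|_A)$ is not isomorphic to $(D,E)$; then by (iii) there exist finite pairwise disjoint $U_0, V_0, W_0 \subseteq A$ witnessing failure of the extension property inside $A$, meaning no $x \in A\setminus(U_0\cup V_0\cup W_0)$ has the required edges and non-edges to these three sets. I then show that $(B, E|_B)$ satisfies the extension property. Given finite pairwise disjoint $U, V, W \subseteq B$, apply the extension property of $(D,E)$ to the augmented disjoint sets $U\cup U_0$, $V\cup V_0$, $W\cup W_0$ to obtain some $x\in D$ with the prescribed relations to all of them. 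This $x$ cannot lie in $A$, or else it would witness the extension property in $A$ for $U_0,V_0,W_0$, contradicting our choice; hence $x\in B$, and in particular $x$ is the required witness for $U,V,W$ inside $(B, E|_B)$. By (iii) again, $(B,E|_B)\cong(D,E)$. The main (mild) obstacle here is the observation that one must \emph{augment} the test sets in $B$ by the obstruction $U_0\cup V_0\cup W_0$ coming from $A$, as this is exactly what forces any $(D,E)$-witness out of $A$ and therefore into $B$.
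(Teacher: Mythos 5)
Your proposal is correct and follows essentially the same route as the paper: (i)--(iv) by the standard Fra\"iss\'e/back-and-forth arguments the paper invokes, and (v) by the same key trick of combining the failure witness from one half with the test sets from the other and applying the extension property of $(D,E)$ to force the witness into the desired half (the paper phrases this as a contradiction from both halves failing, yours as a direct contrapositive, but the combination idea is identical).
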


Remark: Due to the importance of the property in (iii), we give it the name `the extension property'.

Remark: As a result of (ii), there is bijective correspondence between $n$-types and orbits of $n$-tuples.  Given a type $p(\bar{x})$ you obtain the orbit $\{\bar{x} \in D\:$ tp$(\bar{x})=p \}$, and given an orbit $A \subset D^n$ you obtain the type $p(\bar{a})$, where $\bar{a} \in A$. In this light, and as has become customary in modern model theory, we sometimes blur the distinction between a type and the set of tuples that realise that type.

\begin{proof} (i) This is an instance of the more general statement that any countable homogeneous structure in a finite relational language is $\aleph_0$-categorical and has quantifier elimination. See \cite{hod97} for details.

(ii) Since we have quantifier elimination, tp$(\bar{a})=$ tp$(\bar{b})$ implies that $\bar{a} \cong \bar{b}$, so by homogeneiety there is an automorphism that maps $\bar{a}$ to $\bar{b}$.

(iii) We leave this as an exercise for the reader. To show that two countable digraphs which satisfy the extension property are isomorphic, you use a back-and-forth argument. An explanation and examples of back-and-forth arguments can be found in \cite{hod97}.

(iv) This is proved using only the `forth' part of a back-and-forth argument. We sketch the proof. Let $(D',E')$ be a countable digraph, and let $d_1,d_2,d_3,\ldots$ be an enumeration of the elements of $D'$. You then define an embedding of $D'$ into $D$ inductively. The condition that the generic digraph needs to satisfy to ensure that the inductive step works is precisely the extension property.

(v) By (iii), it suffices to show that $(A,E_A)$ or $(B,E_B)$ satisfies the extension property. Suppose for contradiction that both fail the extension property. Let $U_1,V_1,W_1 \subset A$ and $U_2,V_2,W_2 \subset B$ witness this failure. Now let $U=U_1 \cup U_2, V=V_1 \cup V_2$ and $W=W_1 \cup W_2$. These are finite pairwise disjoint subsets of $D$. By (i), we know that $D$ satisfies the extension property, so we can find an appropriate witness $x$ in $D$. Now observe that $x$ is also a witness for $U_1,V_1,W_1$ and for $U_2,V_2,W_2$. But this means we have a contradiction, because $x$ must be in $A$ or in $B$.
\end{proof}

\subsection{Reducts}
Let $\mathcal{M}$ be a structure on domain $M$. A relation $P \subseteq M^k$ is $\emptyset$-definable in $\mathcal{M}$ if there exists a formula $\phi(x_1,\ldots,x_k)$ in the language of $\mathcal{M}$ such that $P=\{(x_1,\ldots,x_k) \in M^k: \mathcal{M} \models \phi(x_1,\ldots,x_k)\}$.

Let $\mathcal{M}$ and $\mathcal{N}$ be two structures on the same domain $M$. We say that $\mathcal{N}$ is a reduct of $\mathcal{M}$ if for all $k \in \mathbb{N}$ and all relations $P \subset M^k$, if $P$ is $\emptyset$-definable in $\mathcal{N}$ then $P$ is $\emptyset$-definable in $\mathcal{M}$.  We say $\mathcal{N}$ is a proper reduct of $\mathcal{M}$ if $\mathcal{N}$ is a reduct of $\mathcal{M}$ and $\mathcal{N} \neq \mathcal{M}$.

The question that is answered here is: What are the reducts of the generic digraph?  For this question to be meaningful an important caveat is required, which is that if two structures are both reducts of each other - which implies that they are (first-order) interdefinable - we regard them as being equal. This is the reason you will find the phrase `up to interdefinability' used in the literature. For the sake of conciseness, we choose to avoid this phrase with the understanding that we will always consider two reducts that are interdefinable to be equal.

An important fact about the reducts of a fixed structure $\mathcal{M}$ is that they form a lattice, where $\mathcal{N} \leq \mathcal{N'}$ if $\mathcal{N}$ is a reduct of $\mathcal{N'}$. The top element is always the original structure $\mathcal{M}$ and the bottom element is the trivial structure $(M,=)$. The meet (respectively join) of two structures $\mathcal{N}$ and $\mathcal{N'}$ will be the structure whose named relations are precisely the $\emptyset$-definable relations that are definable in both (respectively in at least one of) $\mathcal{N}$ and $\mathcal{N'}$.  Intuitively, the meet contains the intersection of the information in the two structures, and the join contains the union of the information.  In addition to determining what the reducts of the generic digraph are, we also determine how they relate in this lattice.

There is a second, closely related notion of a reduct known as a group reduct. We say that $\mathcal{N}$ is a group reduct of $\mathcal{M}$ if Aut$(\mathcal{N}) \geq$ Aut$(\mathcal{M})$. The group reducts of a fixed structure $\mathcal{M}$ form a lattice via the usual inclusion operation; the bottom element is Aut$(\mathcal{M})$ and the top element is always Sym$(D)$.

As a consequence of the Engeler--Ryll-Nardzewski--Svenonius theorem (see \cite{hod97}), if $\mathcal{M}$ is $\aleph_0$-categorical then the lattice of reducts is anti-isomorphic to the lattice of group-reducts. In one direction, a reduct $\mathcal{N}$ is mapped to its automorphism group Aut$(\mathcal{N})$. In the other direction, given a group reduct $G$ you let $\mathcal{N}$ be the structure whose $n$-ary relations are the orbits of the action of $G$ on $M^n$ (where for all $g \in G, \bar{x} \in M^n, g\cdot \bar{x}=g(\bar{x})$).  In this light, we often use the word `reduct' to refer to either notion, with the meaning being clear from the context.

Furthermore, group reducts of $\mathcal{M}$ can be described purely in terms of permutation group theory, without reference to structures. To do this, we need to consider the topological structure of Sym$(M)$.  There are two ways of defining the topology. The first is to say that the topology on Sym$(M)$ is the subspace topology of $M^M$, where $M^M$ has the product topology, where $M$ is given the discrete topology.  The second (equivalent) way is to say what it means for $F \subseteq$ Sym$(M)$ to be closed: We say that $g \in$ Sym$(M)$ is in the closure of $F$ if for all finite $A \subset M$, there exists $f \in F$ such that $f(a)=g(a)$ for all $a \in A$. Then, $F$ is closed if $F$ is equal to the closure of itself.

It is a central fact in permutation group theory that $G \leq$ Sym$(M)$ is closed if and only if there exists a structure with domain $M$ such that $G$ is its automorphism group.  Thus, the group reducts of $\mathcal{M}$ are exactly the closed groups $G \leq$ Sym$(M)$ that contain Aut$(\mathcal{M})$.

From the above discussion, since $(D,E)$ is $\aleph_0$-categorical, the task of determining its reducts is the same as determining its group reducts, which in turn is the same as determining the closed groups $G$ where Aut$(D,E) \leq G \leq$ Sym$(D)$.

%START OF NEW SECTION
%START OF NEW SECTION

\section{Defining the Reducts}
There are two ways of defining reducts, corresponding to the two different notions of reducts. On the permutation group theoretic side, you can define a reduct by adding a function $f \in$ Sym$(D)$ to Aut$(D,E)$, then closing under group operations and closing under the topology. By considering the model theoretic view, you first define a relation, $P$ say, and define the reduct to be the automorphism group of $(D,P)$. In view of this, we establish some notation:

\begin{enumerate}[(i)]
\item Let $G$ be a topological group (e.g. Sym($D$)). For $F \subseteq G$, let $\langle F \rangle$ denote the smallest closed subgroup of $G$ containing $F$. For brevity, when it is clear we are discussing reducts of $(D,E)$, we may abuse notation and write $\langle F \rangle$ to mean $\langle F \cup $ Aut$(D,E) \rangle$.

\item Let $G$ be a group. For $F \subseteq G$, let $\gcl(F)$\footnote{where $\gcl$ stands for `group closure'} denote the smallest subgroup of $G$ containing $F$. As above, we may abuse notation where it is clear we are discussing supergroups of Aut($D,E$).
\end{enumerate}

We begin by showing in the next few lemmas that three particular functions $-,sw$ and $rot$ exist. These functions will give us the three reducts $\langle - \rangle, \langle sw \rangle$ and $\langle rot \rangle$.

\begin{lem} There exists $f:D \to D$ such that for all $x,y \in D$, $E(f(x),f(y))$ iff $E(y,x)$.
\end{lem}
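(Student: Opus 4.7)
The plan is to exhibit $f$ as an isomorphism between $(D,E)$ and its \emph{reverse} digraph. Define $E^{*}$ on $D$ by $E^{*}(x,y) \iff E(y,x)$. Irreflexivity and antisymmetry of $E^{*}$ are immediate, so $(D,E^{*})$ is a digraph on $D$; moreover its non-edge relation coincides with $N$ since $N$ is symmetric. The goal is to show $(D,E) \cong (D,E^{*})$: any such isomorphism $f$ satisfies $E(x,y) \iff E^{*}(f(x),f(y)) \iff E(f(y),f(x))$, and swapping the roles of $x$ and $y$ yields exactly the biconditional required by the lemma.

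To establish the isomorphism, I would appeal to Lemma \ref{genericdigraph}(iii) and verify that $(D,E^{*})$ satisfies the extension property. Given pairwise disjoint finite $U,V,W \subset D$, unpacking $E^{*}(x,u) \iff E(u,x)$ and $E^{*}(v,x) \iff E(x,v)$ shows that a witness $x$ for the $E^{*}$-extension property with parameters $(U,V,W)$ is precisely a witness for the $E$-extension property with parameters $(V,U,W)$. Such a witness exists by the extension property for $(D,E)$, so $(D,E^{*})$ satisfies the extension property, and by uniqueness it is isomorphic to $(D,E)$ via some bijection $f$, which by construction has the required reversing action on edges.

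I do not anticipate a serious obstacle: the argument is essentially the observation that the Fraïssé class of finite digraphs is closed under the operation of reversing all edges, so the Fraïssé limit is invariant under this operation. The only minor bookkeeping point is making sure the direction swap in the extension property (swapping the roles of $U$ and $V$) is applied consistently so that the definition of $E^{*}$ yields the biconditional in the exact form stated, rather than the reversed form $E(f(x),f(y)) \iff E(x,y)$.
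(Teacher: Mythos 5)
Your proof is correct and follows essentially the same route as the paper: define the reversed relation $E^*$, verify the extension property for $(D,E^*)$ by swapping the roles of $U$ and $V$, and take $f$ to be the resulting isomorphism. The direction bookkeeping you flag is handled identically in the paper's argument.
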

Remark. For the rest of this article, we fix such a function and denote it by $-$.
\begin{proof} The idea is to define a structure $(D,E')$ which is isomorphic to $(D,E)$, in such a way that any isomorphism $f:D \to D$ witnessing this fact has the desired property. For this lemma, we let $E'(x,y) = E^*(x,y) \defeq E(y,x)$. We need to show that $(D,E')$ is  isomorphic to $(D,E)$. 

By \autoref{genericdigraph}, it suffices to show that $(D,E')$ satisifes the extension property. So let $U,V,W$ be finite disjoint subsets of $D$. By the definition of $E'$, we need to find $x \in D\backslash (U \cup V \cup W)$ such that $\forall u \in U, E(u,x), \forall v \in V, E(x,v)$ and $\forall w \in W, N(x,w)$.  This is simply the extension property for $(D,E)$ with the role of $U$ and $V$ swapped, so we know such an $x$ exists (again by \autoref{genericdigraph}). Thus, $(D,E)$ and $(D,E')$ are isomorphic.

Now let $f:D \to D$ be an isomorphism from $(D,E)$ to $(D,E')$ to complete the proof.
\end{proof}

\begin{lem} \label{swexists} Let $a \in D$. Then there exists $f:D \to D$ such that
\[ E(f(x),f(y)) \text{ if and only if } \begin{cases}
  E(x,y) \text{ and } x,y \neq a, \text{ OR,}\\
  E^*(x,y) \text{ and } x=a \vee y=a
\end{cases}
\]
\end{lem}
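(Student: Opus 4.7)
The plan is to mirror the strategy of the previous lemma: define an auxiliary digraph $(D, E')$ obtained from $(D, E)$ by reversing exactly those edges incident to $a$, show that $(D, E')$ satisfies the extension property and is therefore isomorphic to $(D, E)$ via \autoref{genericdigraph}(iii), and then take $f$ to be any isomorphism $(D, E') \to (D, E)$.

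Concretely, I would set $E'(x,y)$ to hold precisely when the right-hand side of the biconditional holds: $E(x,y)$ with $x, y \neq a$, or $E^*(x,y)$ with $x = a$ or $y = a$. A routine check, splitting on whether $a \in \{x,y\}$, shows that $E'$ is irreflexive and antisymmetric, so $(D, E')$ really is a digraph. It is also immediate from the definition that $E$ and $E'$ have the same non-edge relation, since reversing edge directions sends non-edges to non-edges.

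The substance of the proof is to verify that $(D, E')$ satisfies the extension property. Given finite pairwise disjoint $U, V, W \subseteq D$, I would construct modified sets $U', V', W'$ by relocating $a$: if $a \in U$, transfer it to $V$; if $a \in V$, transfer it to $U$; if $a \in W$, leave it there; and if $a$ lies in none of the three, add it to $W$. The sets $U', V', W'$ remain pairwise disjoint finite subsets of $D$ and always contain $a$. Applying the extension property of $(D, E)$ to $U', V', W'$ yields a witness $x \notin U' \cup V' \cup W'$; in particular $x \neq a$ and $x \notin U \cup V \cup W$. Translating the conditions on $x$ back through the definition of $E'$, using that non-edges agree for $E$ and $E'$ and that edges at $a$ have been reversed, shows that the same $x$ witnesses the extension property for $(D, E')$ relative to the original $U, V, W$.

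Once this is in place, \autoref{genericdigraph}(iii) yields an isomorphism $f: (D, E') \to (D, E)$, and the identity $E(f(x), f(y)) \iff E'(x, y)$ is exactly the biconditional required. The only real work is the bookkeeping in the case analysis around $a$ when matching up the extension-property witnesses; no step here is conceptually subtle, and in particular no new ideas are needed beyond those already used for the reversal map $-$.
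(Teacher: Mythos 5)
Your proposal is correct and follows essentially the same route as the paper: define $E'$ by reversing the edges at $a$, verify the extension property by relocating $a$ between $U$ and $V$ in the case analysis, and extract $f$ from the resulting isomorphism with $(D,E)$. If anything, your variant of adding $a$ to $W'$ when $a\notin U\cup V\cup W$ is slightly more careful than the paper's treatment of that case, since it rules out the witness $x$ accidentally being $a$ itself.
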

Remark: For the rest of this article, we fix such a function and denote it by $sw$.

\begin{proof} As in the previous lemma, the idea is to find an appropriate structure $(D,E')$ isomorphic to $(D,E)$. For this, we define $E'(x,y)$ as follows: \[ E'(x,y) \defeq \begin{cases}
  E(x,y),& \text{if } x,y \neq a\\
  E^*(x,y),& \text{otherwise}
\end{cases}
\]

As before, \autoref{genericdigraph} tells us that we need to establish the extension property for $(D,E')$. Let $U,V,W \subset D$ be finite and pairwise disjoint. This time the proof splits into three cases.

Case 1: $a \in U$. Let $U'=U\backslash \{a\}$ and $V'=V \cup \{a\}$. Then the extension property of $(D,E)$ applied to $U',V',W$ gives an appropriate $x$.

Case 2: $a \in V$. Let $U'=U \cup \{a\}$ and $V'=V \backslash \{a\}$. Then again the extension property of $(D,E)$ gives us an appropriate $x$.

Case 3: $a \in W$ or $a \notin U \cup V \cup W$. Then applying the extension property of $(D,E)$ gives us an appropriate $x$, without needing to modify $U,V$ or $W$.

Thus, $(D,E')$ satisfies the extension property, and hence is isomorphic to $(D,E)$. We end by letting $f$ witness this isomorphism.
\end{proof}

\begin{lem}
Let $a \in D$. Then there exists $f:D \to D$ such that
\[ E(f(x),f(y)) \text{ if and only if } \begin{cases}
  x,y \neq a \text{ and } E(x,y)\\
  x=a \text{ and } N(x,y)\\
  y=a \text{ and } E^*(x,y)
\end{cases}
\]
\end{lem}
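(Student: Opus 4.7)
My plan is to follow exactly the pattern established in Lemma 1.3 and \autoref{swexists}: define a new edge relation $E'$ on $D$, use \autoref{genericdigraph}(iii) to show that $(D, E')$ is isomorphic to $(D, E)$ by verifying the extension property, and then take $f$ to be any isomorphism from $(D,E)$ to $(D,E')$. The intuition is that in $(D,E')$ the three relations $E, E^*, N$ should be cyclically permuted at $a$.

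Concretely, for $x \neq y$ I would set
\[
E'(x,y) \defeq \begin{cases} E(x,y) & \text{if } x \neq a \text{ and } y \neq a, \\ N(x,y) & \text{if } x = a, \\ E(a,x) & \text{if } y = a. \end{cases}
\]
A quick check using irreflexivity and antisymmetry of $E$, together with the disjointness of $E$ and $N$, shows that $E'$ is irreflexive and antisymmetric, so $(D, E')$ is a digraph.

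The core of the proof is the verification of the extension property for $(D, E')$. Given finite pairwise disjoint $U, V, W \subset D$, I would split into four cases according to where $a$ lies, reducing each to the extension property of $(D, E)$ by cycling $a$ one step along $U \to V \to W \to U$: if $a \in U$ apply the extension property of $(D, E)$ to $(U \setminus \{a\},\, V \cup \{a\},\, W)$; if $a \in V$ apply it to $(U,\, V \setminus \{a\},\, W \cup \{a\})$; if $a \in W$ apply it to $(U \cup \{a\},\, V,\, W \setminus \{a\})$; and if $a$ lies in none of $U, V, W$, apply it to $(U, V, W \cup \{a\})$ to guarantee the witness is distinct from $a$. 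In each case, the witness $x$ obtained from $(D,E)$ is automatically distinct from $a$, and a short check using the definition of $E'$ shows that $x$ also witnesses the extension property for $(U,V,W)$ in $(D, E')$.

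I do not anticipate any substantive obstacle, since the structure of the argument is essentially identical to that of \autoref{swexists}. The only genuine content is the book-keeping in the case analysis, where one must confirm that the cyclic rotation $E \to E^* \to N \to E$ at $a$ corresponds precisely to the cyclic movement of $a$ along the three role sets $U, V, W$ of the extension property.
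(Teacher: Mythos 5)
Your proposal is correct and is precisely the strategy the paper intends, since its own proof of this lemma simply says to reuse the argument given for $-$ and $sw$; your case analysis (cycling $a$ through the roles $U \to V \to W \to U$, and adding $a$ to $W$ when it occurs in none of the sets so the witness avoids $a$) correctly fills in the details.
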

Remark: For the rest of this article, we fix such a function and denote it by $rot$.

Remark: In words, $rot$ sends edges going out of $a$, to edges going into $a$, to non-edges, to edges going out of $a$.

\begin{proof} Use the same strategy as for $-$ and $sw$. 
\end{proof}

\begin{Def} \begin{enumerate}[(i)]
\item We let $\Gamma = (D,E_{\Gamma})$, where $E_{\Gamma} \defeq E(x,y) \vee E^*(x,y)$. $\Gamma$ is a graph and, as will be proved later, is in fact (isomorphic to) the random graph.
\item We let $-_{\Gamma} \in$ Sym$(D)$ be a function which interchanges the sets of edges and non-edges in $\Gamma$.
\item Let $a \in D$. We let $sw_{\Gamma} \in$ Sym$(D)$ be a function which interchanges the sets of edges and non-edges adjacent to $a$, and preserves all other edges and non-edges.
\end{enumerate}
\end{Def}
Remark: $(D,E_{\Gamma})$ is inter-definable with $(D,N)$, where $N(x,y)$ says that $xy$ is a non-edge.

We now have all the background definitions necessary to state the main theorem:

\begin{thm} \label{maintheorem} The reducts of $(D,E)$ are given by the following lattice, which we call $\mathcal{L}$:
\begin{center}
\begin{tikzpicture}[node distance=1.3cm, auto]
  \node (D) {Aut$(D,E)$};
  \node (sw) [above left of=D] {$\langle sw \rangle$};
  \node (glo) [above right of=D] {$\langle - \rangle$};
  \node (sup) [above right of=sw] {$\langle sw,- \rangle$};
  \node (Ra) [above of=sup, yshift=-2mm] {Aut($\Gamma$)};
  \node (sw2) [above left of=Ra] {$\langle sw_{\Gamma} \rangle$};
  \node (glo2) [above right of=Ra] {$\langle -_{\Gamma} \rangle$};
  \node (sup2) [above right of=sw2] {$\langle sw_{\Gamma},-_{\Gamma} \rangle$};
  \node (top) [above of=sup2] {Sym(D)};
  \node (rot) [right of=glo] {$\langle rot \rangle$};
  \node (rot2)[above of=rot, yshift=8mm] {$\langle -,rot \rangle$};
  \draw[-] (D) to node {} (sw);
  \draw[-] (D) to node {} (glo);
  \draw[-] (sw) to node {} (sup);
  \draw[-] (glo) to node {} (sup);
  \draw[-] (sup) to node {} (Ra);
  \draw[-] (Ra) to node {} (sw2);
  \draw[-] (Ra) to node {} (glo2);
  \draw[-] (sw2) to node {} (sup2);
  \draw[-] (glo2) to node {} (sup2);
  \draw[-] (sup2) to node {} (top);
  \draw[-] (D) to node{}(rot);
  \draw[-] (rot) to node{}(rot2);
  \draw[-] (glo) to node{}(rot2);
..\draw[-] (rot2) to node{}(top);
\end{tikzpicture}
\end{center}
\end{thm}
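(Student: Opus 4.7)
The plan is to prove the equality in two directions: first that the eleven displayed groups are distinct closed supergroups of Aut$(D,E)$ standing in the claimed containments, and second that every closed group $G$ with Aut$(D,E) \le G \le$ Sym$(D)$ coincides with one of them.

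For the forward direction I would begin by verifying that the undirected graph $\Gamma = (D,E_\Gamma)$ satisfies the extension axiom characterising the random graph: this is an immediate consequence of \autoref{genericdigraph}, and it identifies the upper part of $\mathcal{L}$ (from Aut$(\Gamma)$ to Sym$(D)$) with Thomas's five-element lattice of reducts of the random graph (\cite{tho91}), so those five nodes and the edges between them come for free. I would then work through the lower part of $\mathcal{L}$, establishing for each claimed covering relation a pair consisting of a $\emptyset$-definable relation $P$ of $(D,E)$ and a generator $f$ such that $f$ preserves $P$ if $f$ lies in the smaller group or violates $P$ if $f$ lies in the strictly larger group. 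In particular, this shows that $sw$, $-$, $rot$ are genuinely in Sym$(D) \setminus $ Aut$(D,E)$ and that their closures are pairwise incomparable; it also shows that $\langle -, rot \rangle$ is a proper subgroup of Sym$(D)$, which is the most delicate point in this direction, since $\langle -, rot \rangle$ does not land in Aut$(\Gamma)$ and so the random-graph classification gives no help.

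For the reverse direction, let $G$ be an arbitrary closed supergroup of Aut$(D,E)$. I would split into two cases according to whether or not every element of $G$ preserves $E_\Gamma$. If yes, then $G \le$ Aut$(\Gamma)$ and Thomas's classification places $G$ in the top half of $\mathcal{L}$. Otherwise there is some $f \in G$ that does not preserve $E_\Gamma$, and to classify $G$ I would invoke the canonical-function machinery of Bodirsky--Pinsker (\cite{bp11}, \cite{bpt13}): after expanding $(D,E)$ by a generic linear order and using its Ramsey property, from $f$ one extracts a $g \in G$ (or its topological closure) that is canonical, meaning it induces a well-defined function on the three $2$-types $\{E, E^*, N\}$. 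A finite case analysis of the possible canonical behaviours of $g$ is then matched against the behaviours of $sw$, $-$, $rot$, $sw_\Gamma$, $-_\Gamma$ and their compositions to conclude that $G$ contains one of these generators, hence equals the corresponding node of $\mathcal{L}$. Bennett's classification of the reducts of the random tournament (\cite{ben97}) is invoked in parallel for the subcase in which $G$ preserves the tournament relation $E \vee E^*$ but not $\Gamma$ itself.

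The principal obstacle is the canonical-function analysis, and within it the verification that the small groups $\langle sw \rangle$, $\langle - \rangle$, $\langle rot \rangle$, $\langle sw, - \rangle$ and $\langle -, rot \rangle$ are precisely the groups generated by the non-identity canonical behaviours that do not already lie in Aut$(\Gamma)$. The difficulty is twofold: one must show that every canonical behaviour not already accounted for by the displayed generators collapses the group up to Aut$(\Gamma)$ or higher, and one must produce enough canonical functions from the initial $f$ to recognise which node $G$ sits in --- this typically requires composing $f$ with elements of Aut$(D,E)$ and iterating the Ramsey-type extraction. It is here that most of the combinatorial content of the proof is concentrated.
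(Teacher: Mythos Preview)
Your case split in the reverse direction is inverted, and this is a genuine gap rather than a cosmetic one. Thomas's theorem classifies the closed groups \emph{above} Aut$(\Gamma)$, not below it: if every element of $G$ preserves $E_\Gamma$ then $G \le$ Aut$(\Gamma)$, and this is precisely the region where Thomas gives no information. In the paper this is the most delicate region (\autoref{region2}), handled by first transferring the problem to the random tournament via an auxiliary construction $G \mapsto T(G)$ and invoking Bennett's classification there, and then finishing the residual case $\langle sw, - \rangle < G \le$ Aut$(\Gamma)$ with canonical functions. Your proposed placement of Bennett ``for the subcase in which $G$ preserves the tournament relation $E \vee E^*$ but not $\Gamma$ itself'' is incoherent as stated, since $E \vee E^*$ \emph{is} the edge relation of $\Gamma$; Bennett is used exactly when $G$ \emph{does} preserve $E_\Gamma$.

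Conversely, the case $G \not\le$ Aut$(\Gamma)$ is not disposed of by a single canonical-function sweep matching behaviours against a fixed list of generators. The paper first proves a trichotomy (\autoref{threeregions}): every reduct either lies below Aut$(\Gamma)$, contains Aut$(\Gamma)$, or contains $\langle rot \rangle$. Only then does Thomas handle the middle option, and a separate canonical-function argument (\autoref{region3}) handles the groups above $\langle rot \rangle$. Without the trichotomy you have no mechanism for deciding when a non-$E_\Gamma$-preserving $G$ lands in the $rot$ branch rather than above Aut$(\Gamma)$. Note also that the canonical functions one extracts are canonical as maps $(D,E,<,c_1,\ldots,c_n) \to (D,E)$, so their behaviour is recorded on far more $2$-types than the three types $\{p_E, p_{E^*}, p_N\}$ you list; the substantive combinatorics is in how the function behaves on and between the infinite orbits determined by the constants $\bar{c}$, and your sketch does not engage with this.
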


This theorem can be split into two main claims. The first is that $\mathcal{L}$ is a sublattice of the reducts of $(D,E)$ (so for example one needs to show that the meets and joins are correct). The second claim is that $\mathcal{L}$ is in fact the whole lattice - that there are no other reducts. The second claim is the more interesting claim, and requires more work to prove.

%START OF NEW SECTION
%START OF NEW SECTION

\section{Understanding the reducts}
The purpose of this section is twofold.  The first is to establish conditions for an unknown reduct $G$ of $(D,E)$ to be equal to or to contain particular elements of $\mathcal{L}$ - these lemmas will be used throughout the article. The second is to provide familiarity with the reducts, without which the article may be more difficult to understand.

The first few lemmas will provide a concrete description of the three groups $\langle sw \rangle, \langle - \rangle$ and $\langle rot \rangle$. The way we do this is by comparing how two functions behave, via the following definition.

\begin{Def} \label{behaves}
Let $f,g: D \to D$ and $A \subset D$. We say \emph{$f$ behaves like $g$ on $A$} if for all finite tuples $\bar{a} \in A$, $f(\bar{a})$ is isomorphic (as a finite digraph) to $g(\bar{a})$. If $A=D$, we simply say \emph{$f$ behaves like $g$}.
\end{Def}

Example. All automorphisms of $(D,E)$ behave like the identity $id:D \to D$. Conversely, all $f \in Sym(D)$ which behave like $id$ are automorphisms.

\textbf{Important Remark.} If $f:D \to D$ is any function and $g \in$ Aut$(D,E)$, then $h \defeq g \circ f$ behaves like $f$. The converse it also true: if $h$ behaves like $f$, then there is $g \in$ Aut$(D,E)$ such that $h= g \circ f$.

Before continuing, we note the following useful fact. If a bijection $f$ and its inverse both preserve a definable relation $P$, then the group $\langle$Aut$(D,E) \cup \{f\} \rangle$ also preserves $P$. This follows straightforwardly by unravelling the definitions, and doing this would be a worthwhile exercise for the reader first encountering these notions.

We start with the simplest of the three groups, $\langle - \rangle$.

\begin{lem} \label{understanding-}Let $f \in Sym(D)$. Then $f \in \langle - \rangle \backslash$ Aut$(D,E) \Leftrightarrow f$ behaves like $-$.
\end{lem}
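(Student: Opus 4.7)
The plan is to identify $\langle - \rangle$ explicitly as $H \sqcup H \cdot -$, where $H = \text{Aut}(D,E)$, and to recognize $H \cdot -$ as the set of bijections behaving like $-$ via the Important Remark. For the easy direction $(\Leftarrow)$: if $f$ behaves like $-$, the converse of the Important Remark yields $g \in H$ with $f = g \circ -$, so $f \in \gcl(H \cup \{-\}) \subseteq \langle - \rangle$. To see $f \notin H$, pick an edge $(x,y) \in E$ (available by the extension property). By the defining property of $-$, $E(-(y),-(x)) \iff E(x,y)$ holds but $E(-(x),-(y)) \iff E(y,x)$ fails by antisymmetry, so the positional map $(x,y) \mapsto (-(x),-(y))$ is not an isomorphism, i.e., $(x,y) \not\cong -(x,y)$. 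Since $f(x,y) \cong -(x,y) \not\cong (x,y)$, $f$ cannot behave like $id$, and the example following the definition of ``behaves like'' then forces $f \notin H$. The same observation shows that, for elements of $\text{Sym}(D)$, behaving like $id$ and behaving like $-$ are mutually exclusive.

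For the hard direction $(\Rightarrow)$, the first step is to show that $\gcl(H \cup \{-\}) = H \sqcup H \cdot -$. Two direct computations from the defining property of $-$ give (a) $-^2$ behaves like $id$, since $E((-\circ-)(x),(-\circ-)(y)) \iff E(-(y),-(x)) \iff E(x,y)$, hence $-^2 \in H$; and (b) $-$ normalizes $H$: noting first that $-^{-1} = (-^2)^{-1} \circ -$ lies in $H \cdot -$ and therefore also reverses edges, for $h \in H$ the expression $E(-h-^{-1}(x), -h-^{-1}(y))$ unfolds stepwise to $E(h-^{-1}(y), h-^{-1}(x)) \iff E(-^{-1}(y),-^{-1}(x)) \iff E(x,y)$, so $-h-^{-1} \in H$. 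These two facts imply that every word in $H \cup \{-, -^{-1}\}$ reduces to $g$ or $g \circ -$ with $g \in H$, and the two cosets are disjoint by the mutual exclusion noted above.

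The second step is to take the topological closure. Let $f \in \langle - \rangle \setminus H$; since $f \notin H$, there exists a tuple $\bar{a}_0$ with $f(\bar{a}_0) \not\cong \bar{a}_0$. For any tuple $\bar{b}$, choose a finite $A \subset D$ containing the entries of $\bar{a}_0$ and $\bar{b}$; by the definition of the closure there is $w_A \in \gcl(H \cup \{-\})$ with $w_A|_A = f|_A$. By the previous step $w_A \in H$ or $w_A \in H \cdot -$; the case $w_A \in H$ is impossible since $w_A(\bar{a}_0) = f(\bar{a}_0) \not\cong \bar{a}_0$, so $w_A = g \circ -$ for some $g \in H$, giving $f(\bar{b}) = w_A(\bar{b}) \cong -(\bar{b})$. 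Since $\bar{b}$ was arbitrary, $f$ behaves like $-$. The main subtlety is this dichotomy: a priori the approximant $w_A$ could lie in either coset depending on $A$, but the fixed witness $\bar{a}_0$ forces $w_A \in H \cdot -$ whenever $A$ contains $\bar{a}_0$, uniformizing the behavior of $f$.
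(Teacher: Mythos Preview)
Your proof is correct, but your argument for the ``$\Rightarrow$'' direction is genuinely different from the paper's. The paper proceeds \emph{semantically}: it identifies the 4-ary relation $E_w(x,y;a,b) \defeq E(x,y) \leftrightarrow E(a,b)$, observes that $-$ and $-^{-1}$ (hence all of $\langle - \rangle$) preserve it, notes that $N$ is definable from $E_w$, and then checks that any $f \notin \text{Aut}(D,E)$ which does not behave like $-$ must violate $E_w$. Your approach is \emph{syntactic/group-theoretic}: you show $-^{2} \in H$ and $-H-^{-1} = H$, deduce $\gcl(H \cup \{-\}) = H \sqcup H\!\cdot\! -$, and then run a uniformization argument on the topological closure using a fixed witness $\bar{a}_0$ to force every approximant into the nontrivial coset. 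Your route avoids the need to guess the right invariant relation and would generalise to any situation where the added generator has finite order modulo the base group and normalises it. The paper's route, on the other hand, produces the relation $E_w$ as a by-product, and this relation is reused later (e.g.\ in \autoref{sublattice}(ii)) to separate $\langle - \rangle$ from the other reducts. One small point you are using implicitly: that $\langle - \rangle$ equals the topological closure of $\gcl(H \cup \{-\})$, i.e.\ that the closure of a subgroup of $\text{Sym}(D)$ is again a subgroup; this is standard for topological groups and is tacitly assumed in the paper as well.
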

\begin{proof} ``$\Leftarrow$''. We need to show that $f \in \langle - \rangle$. Consider the function $g \defeq - \circ f$. It is easy to see that for all tuples $\bar{a} \in D$, $g(\bar{a})$ is isomorphic to $\bar{a}$. This means that $g$ behaves like $id$, so $g \in$ Aut$(D,E)$. Hence, $f = -^{-1} \circ g \in \gcl(-) \subseteq \langle - \rangle$, so we are done.

``$\Rightarrow$''. $-$ and $-^{-1}$ preserve the weakened edge relation $E_w (x,y;a,b) \defeq E(x,y) \leftrightarrow E(a,b)$, so $\langle - \rangle$ must also preserve $E_w$. In addition, the non-edge relation $N(x,y)$ is definable from $E_w$: $N(x,y) \Leftrightarrow \forall a,b  (E_w(x,y;a,b) \leftrightarrow E_w(y,x;a,b))$; hence, $\langle - \rangle$ also preserves non-edges.

Now suppose $f$ does not behave like $-$ on $D$ -  we want to show $f \notin \langle - \rangle \backslash$ Aut$(D,E)$. If $f$ is an automorphism, then we're trivially done, so assume $f \notin$ Aut$(D,E)$. If $f$ does not preserve non-edges, then we are also done by the previous paragraph; so assume $f$ does preserve non-edges. The only possibility that remains is that there are edges $ab,cd \in D$ such that $E(f(a),f(b))$ and $\neg E(f(c),f(d))$. This means that $E_w(a,b;c,d)$ and $\neg E_w(f(a,b;c,d))$, i.e. that $f$ does not preserve $E_w$. Thus, $f \notin \langle - \rangle$, as required.
\end{proof}

%UNDERSTANDING SW
%UNDERSTANDING SW

Next we look at $\langle sw \rangle$. To do this we need some notation. For $A \subset D$, we let $sw_A: D \to D$ denote a function which behaves like $id$ on $A$ and $A^c$, and which switches the direction of all edges between $A$ and $A^c$. For example, $sw=sw_a$ for some $a \in D$, and, $sw_{\emptyset}$ is just an automorphism.  The fact that $sw_A$ exists for all $A \subseteq D$ follows from the fact that all countable digraphs are embeddable in the generic digraph (\autoref{genericdigraph}). However, $sw_A$ cannot be a bijection for all $A \subset D$. This is because the image of the generic digraph on applying $sw_A$ may not be isomorphic to the digraph. For example, if you let $A=\{x \in D: E(a,x)\}$ where $a$ is some element of $D$, then $sw_A(a)$ will not have any outward edges. However, there are many subsets of $A$ for which $sw_A$ can be a bijection. For example, if $A \subset D$ is finite, one checks that the digraph obtained by switching with respect to a $A$ satisfies the extension property, so it is isomorphic to the generic digraph.

A big idea in the next lemma is this: Let $a_1,\ldots,a_n$ be distinct elements of $D$. Then $sw_{a_1} \circ \ldots \circ sw_{a_n}$ behaves like $sw_A$, where $A=\{a_1,\ldots,a_n\}$. The problem with this idea is that, as stated, it is false: this is because the points $a_1,\ldots,a_n$ will not necessarily be fixed by each of the $sw_{a_i}$'s. \emph{Do} however keep this idea in mind, as it provides the intuition for (parts of) the lemma.

\begin{lem} \label{understandingsw}
\begin{enumerate}[(i)]
\item $\gcl(sw) = \{f \in$ Sym$(D): f$ behaves like $sw_A$, for some finite $A \subset D\}.$
\item For all $A \subseteq D$, if $sw_A \in$ Sym($D)$ then $sw_A \in \langle sw \rangle$.
\item For all proper non-empty $A \subset D$, if $sw_A \in$ Sym$(D)$ then $\langle sw_A \rangle = \langle sw \rangle$.
\item $\langle sw \rangle = \{ f \in$ Sym$(D): f$ behaves like $sw_A$, for some $A \subseteq D\}.$
\end{enumerate}
\end{lem}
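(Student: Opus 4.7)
The four parts build on each other: (i) is the combinatorial core classifying the abstract group closure $\gcl(sw)$, while (ii)--(iv) extend to the topological closure $\langle sw \rangle$ using the principle that $g \in \langle sw \rangle$ iff on every finite tuple $\bar{x} \subset D$ there is $h \in \gcl(sw)$ with $h|_{\bar{x}} = g|_{\bar{x}}$. My plan is to prove (i) first by a group-theoretic argument, then deduce (ii) and (iv) by topological approximation using homogeneity of $(D,E)$, and finally derive (iii) from (ii) together with the $A$/$A^c$ symmetry of $sw_A$.

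For (i), let $S \defeq \{f \in \text{Sym}(D) : f \text{ behaves like } sw_A \text{ for some finite } A \subset D\}$. The forward inclusion $\gcl(sw) \subseteq S$ amounts to checking that $S$ is a subgroup containing $sw$ and $\text{Aut}(D,E)$; by the Important Remark every $f \in S$ has the form $\alpha \circ sw_A$ with $\alpha \in \text{Aut}(D,E)$, so a short calculation yields the composition formula that if $f$ behaves like $sw_A$ and $g$ behaves like $sw_B$ then $gf$ behaves like $sw_{A \triangle f^{-1}(B)}$, and $f^{-1}$ behaves like $sw_{f(A)}$; all finiteness is preserved. For the reverse inclusion I would induct on $|A|$. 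For any $a \in D$, conjugating $sw$ by an automorphism $\phi$ with $\phi(a_0) = a$ (where $a_0$ is the distinguished vertex of \autoref{swexists}) yields an element of $\gcl(sw)$ behaving like $sw_{\{a\}}$. In the inductive step, starting from $f$ behaving like $sw_{\{a_1,\dots,a_{n-1}\}}$ and composing with a single-vertex switch $g$ behaving like $sw_{\{a_n\}}$, the composition formula shows $gf$ behaves like $sw_{\{a_1,\dots,a_{n-1}\} \triangle f^{-1}(\{a_n\})}$, so it suffices to arrange $f(a_n) = a_n$, which I do by postcomposing $f$ with an automorphism sending $f(a_n)$ back to $a_n$ (possible because $(D,E)$ has a unique $1$-type).

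For (ii) with infinite $A$, note that $sw_A|_{\bar{x}}$ depends only on the finite set $A \cap \bar{x}$, so (i) supplies $h_0 \in \gcl(sw)$ behaving like $sw_{A \cap \bar{x}}$, and postcomposing with an automorphism (by homogeneity) yields $h \in \gcl(sw)$ literally agreeing with $sw_A$ on $\bar{x}$. The $\supseteq$ inclusion of (iv) follows by the same approximation, while for the $\subseteq$ inclusion, given $f \in \langle sw \rangle$, I would define $A \subseteq D$ vertex-by-vertex by testing whether $f$ preserves or flips the orientation at each vertex, then verify tuple-by-tuple (using topological convergence to elements of $\gcl(sw)$) that $f$ behaves like $sw_A$. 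For (iii), the inclusion $\langle sw_A \rangle \subseteq \langle sw \rangle$ is immediate from (ii). For the reverse, I must show $sw \in \langle sw_A \rangle$, and here both hypotheses on $A$ matter: given a finite $\bar{x}$, \autoref{genericdigraph}(v) yields that $A$ or $A^c$ is isomorphic to the generic digraph, and (swapping $A$ with $A^c$ if necessary, noting that $sw_A$ and $sw_{A^c}$ coincide up to an automorphism) homogeneity then lets me pick $\phi \in \text{Aut}(D,E)$ with $\phi(A) \cap \bar{x} = \{a_0\} \cap \bar{x}$; the conjugate $\phi \circ sw_A \circ \phi^{-1} \in \gcl(sw_A)$, corrected by a further automorphism, agrees with $sw$ on $\bar{x}$.

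The main obstacle is the inductive step of (i). The paragraph preceding the lemma warns that the intuitive identity $sw_{a_1} \circ \cdots \circ sw_{a_n} \approx sw_A$ is false as stated, because the individual factors move the other $a_j$; the success of the induction depends entirely on the composition formula making precise how these movements compound, and on exploiting the abundance of automorphisms to correct each intermediate factor so that the accumulated switch set ends up being exactly $A$ rather than some other finite set.
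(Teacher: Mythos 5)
Your parts (i) and (ii) are essentially the paper's argument: the symmetric-difference bookkeeping ($gf$ behaves like $sw_{A \triangle f^{-1}(B)}$, $f^{-1}$ like $sw_{f(A)}$) is exactly what the paper establishes by induction on the length of a word in $sw$ and automorphisms, and your induction on $|A|$ with automorphism corrections is the paper's $h_{i+1}=sw_{h_i(a_{i+1})}h_i$ device in different clothes. In (iv) you do diverge: the paper proves the hard inclusion contrapositively, exhibiting the invariant ternary relation $P_{sw}$ (``every $3$-tournament has an even number of switched edges'') and running a long partition-and-case analysis, whereas you propose to glue the local finite switch-sets supplied by (i) into a single global $A$. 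That route is viable and arguably cleaner, but be aware of what it costs: on a fixed finite tuple the switch-set is determined only up to complementation (and only per connected component of the underlying edge-graph), and for a vertex $x$ with $N(a_0,x)$ membership in $A$ cannot be read off from the pair $(a_0,x)$ at all --- you must propagate the assignment through common neighbours and prove independence of the path chosen, which is precisely where the content of the paper's $A_2,B_2,A_3,B_3$ partition lives. The paper's route also yields the useful by-product that $\langle sw\rangle$ is exactly the stabiliser of $P_{sw}$.

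The genuine gap is in (iii). You claim that, since $A$ or $A^c$ is isomorphic to the generic digraph, ``homogeneity lets me pick $\phi\in$ Aut$(D,E)$ with $\phi(A)\cap\bar{x}=\{a_0\}\cap\bar{x}$.'' That statement is the entire difficulty of (iii), and neither ingredient delivers it. Homogeneity only extends isomorphisms between finite substructures; it gives no control over how an automorphism moves an arbitrary infinite, co-infinite set $A$. Genericity of $(A^c,E|_{A^c})$ lets you realise any finite digraph inside $A^c$, but the configuration you need has one prescribed vertex in $A$ and all the others in $A^c$, with prescribed edges \emph{across} the partition, and genericity of either side says nothing about those cross-edges: a priori, no vertex of $A$ need realise the required type over a chosen copy of the rest of $\bar{x}$ in $A^c$. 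The paper proves the needed combinatorial fact --- every finite tuple $\bar{a}$ has an isomorphic copy $\bar{b}$ with $A\cap\bar{b}=\{b_1\}$ or $\{b_2,\ldots,b_n\}$ --- by induction on the length of the tuple via a dichotomy: either some witness of the relevant extension type lies in $A^c$ (done), or \emph{every} such witness lies in $A$, in which case one builds a fresh tuple $(c_1,\ldots,c_{n+1})$ with $c_1\in A^c$ and $c_2,\ldots,c_{n+1}$ forced into $A$ by that very assumption, landing in the complementary form of the conclusion (this is why the disjunction in the target statement is essential). That dichotomy is the missing idea; note that \autoref{genericdigraph}(v) is not used for this step in the paper and cannot substitute for it.
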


\begin{proof} For all of this proof, let $a \in D$ be the point such that $sw = sw_a$.

(i) RHS $\subseteq$ LHS. From the important remark above, in order to show that every $f$ which behaves like $sw_A$ is in $\gcl(sw)$, it suffices to show that $sw_A \in \gcl(sw)$.

First, we show that $sw_{a'} \in \gcl(sw)$, for all $a' \in D$. This is easy: let $g \in$ Aut$(D,E)$ map $a'$ to $a$. Then $sw \circ g \in \gcl(sw)$ and $sw \circ g$ behaves like $sw_{a'}$. Thus, again by the important remark, $sw_{a'} \in \gcl(sw)$.

Now let $A=\{a_1,\ldots,a_n\} \subset D$. We start by letting $h_1 = sw_{a_1}$. Then let $h_2 = sw_{h_1(a_2)} h_1$ - observe that $h_2$ behaves like $sw_{\{a_1,a_2\}}$. Next let $h_3 = sw_{h_2(a_3)} h_2$ - $h_3$ behaves like $sw_{\{a_1,a_2,a_3\}}$. Continuing, we obtain $h_n$ which behaves like $sw_A$. By construction, $h_n \in \gcl(sw)$ and so by the important remark $sw_A \in \gcl(sw)$, as required.

LHS $\subseteq$ RHS: Any $f \in \gcl(sw)$ can be written as $g_n sw^{\epsilon_n} \ldots g_1 sw^{\epsilon_1} g_0$, where the $g_i$ are automorphisms and $\epsilon_i \in \{1,-1\}$. Since $sw^{-1}$ behaves like $sw$, it is equal to $g\circ sw$, for some $g \in$ Aut$(D,E)$, so without loss, $\epsilon_i=1$ for all $i$.

We prove by induction on $n$ that $f$ behaves like $sw_A$ for some finite $A$. In the base case, $f =g_0$ which behaves like $sw_{\emptyset}$. So assume that $f' \defeq g_n sw \ldots g_1 sw g_0$ behaves like $sw_A$ for some finite $A$; we consider $f=g_{n+1}sw f'$. Let $a' = f'^{-1}(a)$. If $a' \notin A$, then $f$ behaves like $sw_{A \cup \{a'\}}$. If $a' \in A$, then $f$ behaves like $sw_{A \backslash \{a'\}}.$ In both cases, we have what we want, thus completing the proof.

(ii) Let $A \subseteq D$ and $sw_A \in$ Sym$(D)$. We need to show that for all finite tuples $\bar{d} \in D$, there exists $g \in \gcl(sw)$ such that $g(\bar{d})=sw_A(\bar{d})$.

Let $A' = A \cap \bar{d}$. $A'$ is finite, so by part (i), $sw_{A'} \in \gcl(sw)$. Now, $sw_{A'}(\bar{d})$ is isomorphic to $sw_A(\bar{d})$, so by homogeneity let $h \in$ Aut$(D,E)$ map $sw_{A'}(\bar{d})$ to $sw_A(\bar{d})$. Letting $g=hsw_{A'}$ finishes the proof.

(iii) Part (ii) tells us that $\langle sw_A \rangle \subseteq \langle sw \rangle$. To show the other direction, it suffices to show that $sw \in \langle sw_A \rangle$.

So let $A \subset D$ be such that $A$ and $A^c$ are non-empty. By unravelling the definitions, we need to prove the following: For all $a_1,\ldots a_n \in D$, there exist $b_1,\ldots,b_n \in D$ such that $\bar{a} \cong \bar{b}$, and $A \cap \bar{b} = \{b_1\}$ or $\{b_2,\ldots,b_n\}$.

If $A$ is finite, we let $b_1$ be any element of $A$ and find the remaining $b_2,\ldots,b_n$ by homogeneity. By the same reasoning, we are done if $A^c$ is finite. Hence, assume that $A$ is infinite and co-infinite.

We prove the result by induction on the length $n$ of the tuple $\bar{a}$. The base case $n=1$ is trivial - simply let $b_1$ be any element of $A$. Now let $(a_1,\ldots, a_{n+1})$ be any tuple of length $n+1$. By the inductive hypothesis, we can find $(b_1,\ldots,b_n)$ isomorphic to $(a_1,\ldots,a_n)$ where $A \cap \bar{b} = \{b_1\}$ or $\{b_2,\ldots,b_n\}$. Without loss, we may assume that $A \cap \bar{b} = \{b_1\}$: the argument is symmetric in the other case.

If we find $x \in A^c$ such that $(b_1,\ldots,b_n,x) \cong \bar{a}$, then we are done, so from now on assume that $(b_1,\ldots,b_n,x) \cong \bar{a}$ implies $x \in A$. $(*)$

Now consider a tuple $(c_1,\ldots,c_{n+1})$ satisfying the following:
\begin{itemize}
\item $c_1$ is some element of $A^c \backslash \{b_2,\ldots,b_n\}$.
\item $\bar{c} \cong \bar{a}$.
\item For each $2 \leq i \leq n+1$, $(b_1,\ldots,b_n,c_i) \cong \bar{a}$
\end{itemize}

The first condition can be satisfied as $A^c$ is infinite. The latter two conditions can be satisfied because $(D,E)$ is homogeneous. By $(*), c_2,\ldots,c_{n+1} \in A$. So $(c_1,\ldots,c_{n+1})$ satisfies all the conditions that we want, completing the induction and hence the proof.

(iv) By part (ii), we have RHS $\subseteq$ LHS. To prove the other direction, we find a relation $P$ that all functions in $\langle sw \rangle$ preserve, and show that if $f$ does not behave like $sw_A$ for any $A$, then $f$ does not preserve $P$.

The relation is: \[
\begin{aligned}
P(x,y,z) \defeq &(E(x,y) \wedge E(y,z) \wedge E(x,z))\\
            \vee &(E^*(x,y) \wedge E^*(y,z) \wedge E(x,z))\\
            \vee &(E^*(x,y) \wedge E(y,z) \wedge E^*(x,z))\\
            \vee &(E(x,y) \wedge E^*(y,z) \wedge E^*(x,z))
\end{aligned}
\]

`Motto': A function preserves $P$ if for all tournaments on three vertices, it switches an even number of edges.

To show that $\langle sw \rangle$ preserves $P$, it suffices to show that $sw$ preserves $P$. This is easy to see. First, $sw$ clearly preserves non-edges. Second, given any three vertices which form a tournament, either $sw$ does not switch any of the edges, or, it switches the direction of precisely two edges (and it would be those two edges which are adjacent to $a$).

Now let $f \in$ Sym$(D)$ be a function which does not behave like $sw_A$ for any $A \subseteq D$. Define a partition of $D$ into subsets as follows: \begin{itemize}
\item Let $A_0 = \{a_0\}$, where $a_0$ is any element of $D$.
\item Let $A_1=\{x \in D: x$ is adjacent to $a_0$ and $f$ does not switch this edge$\}$
\item Let $B_1=\{x \in D: x$ is adjacent to $a_0$ and $f$ switches this edge$\}$
\item Let $A_2=\{x \in D:$ there is an edge between $A_1$ and $x$ that is not switched by $f \}$
\item Let $B_2=\{x \in D:$ there is an edge from $A_1$ to $x$ and all edges between $A_1$ and $x$ are switched by $f \}$
\item Let $A_3=\{x \in D:$ there are no edges between $A_1$ and $x$ and there is an edge between $B_1$ and $x$ switched by $f \}$
\item Let $B_3=\{x \in D:$ there are no edges between $A_1$ and $x$ and all edges between $B_1$ and $x$ are not switched by $f \}$.
\end{itemize}

By construction, these sets are pairwise disjoint. The fact their union equals $D$ follows from the fact that the maximum path length in the generic digraph is two.

The idea behind defining these sets is that \emph{if} $f$ behaved like $sw_A$, then this procedure would find $A$ for us ($A$ would be the union of the $A_i$'s or the union of the $B_i$'s). In this light, let $A=A_0 \cup \ldots \cup A_3$ and $B$ be its complement. By assumption, $f$ does not behave like $sw_A$. What is left in the proof is simply a matter of case checking: we look at the possible reasons $f$ could not behave like $sw_A$ and show in each one that $f$ does not preserve $P$. 

Case 1a: There exists an edge $x,y \in A_1$ that is switched by $f$. Then consider the tournament $(a_0,x,y)$ - $f$ switches exactly one edge, so by the motto $f$ does not preserve $P$.

Case 1b: There exists an edge $x,y \in B_1$ that is switched by $f$. Then $f$ switches all three edges of $(a_0,x,y)$, so $f$ does not preserve $P$.

Case 1c: There exists an edge $x,y \in B_2$ switched in $f$. Let $z$ be any element of $A_1$. Then $f$ switches one edge in $(x,y,z)$.

Case 1d: There exists an edge $x,y \in A_2$ switched by $f$. By definition of $A_2$ there is an $x' \in A_1$ such that $f$ does not switch the edge $x'x$, and there is a corresponding $y'$ for $y$. If $x'=y'$, then we get that $f$ switches one edge in $(x,y,x')$. If $x' \neq y'$, consider the tournament $(x,y,x',y')$.  Now consider any element $z \in D$ such that there is an edge between $z$ and all the vertices $x,y,x',y'$. No matter what $f$ does to these edges, we will be able to find a tournament on three vertices on which $f$ switches an odd number of edges.  For example, if $f$ switched all the edges between $z$ and $x,y,x',y'$, then look at $(x,y,z)$. \footnote{Note that what happens between between $x$ and $y'$ and between $x'$ and $y$ does not matter.}

If there is an edge inside $B_3$ that is switched, then look at any point in $B_1$. If there is an edge inside $A_3$ that is switched, use a similar argument as in Case 1d but using $B_1$ in place of $A_1$. We have now dealt with all edges whose points lie in the same part.

Case 2a: There is an edge $xy$ between $A_1$ and $B_1$ not switched by $f$. Then $f$ switches direction of one edge of $(a_0,x,y)$.

Case 2b: There is an edge $xy$ between $A_1$ and $A_2$ which is switched by $f$. Let $y' \in A_1$ be such that $yy'$ is an edge not switched by $f$. Then look at $(x,y,x',a_0)$ and use the argument in Case 1d.

We have now dealt with all edges containing a point in $A_1$.

Case 2c: There is an edge $xy$ between $A_2$ and $A_3$ which is switched. Let $x' \in A_1$ be adjacent to $x$, and let $y' \in B_1$ be adjacent to $y$ such that $yy'$ is switched. Then consider $(a_0,x,y,x',y')$ and continue as in Case 1d.

If there is an edge between $A_2$ and $B_1$ that is not switched, use Case 1d. Dealing with an edge between $A_2$ and $B_2$ that is not switched is straightforward. If there is an edge between $A_2$ and $B_3$ that is not switched, then continue as in Case 2c. We have now dealt with all edges containing a point in $A_2$.

Case 2d: There is an edge $xy$ between $B_1$ and $B_2$ switched by $f$. Let $z \in A_1$ be a vertex adjacent to $y$. Consider the tournament $(x,y,z,a_0)$, and use the same argument as in Case 1d.

Case 2e: There is an edge $xy$ between $B_1$ and $A_3$ not switched by $f$. Use an argument similar to Case 2b.

This deals with all the edges containing a point in $B_1$.

Case 2f: The case where there is an edge between $A_3$ and $B_3$ which is not switched is straightforward. If there is an edge $xy$ between $A_3$ and $B_2$ that is not switched, let $x' \in B_1$ be such that $xx'$ is an edge that is switched, and $y' \in A_1$ be an edge that is switched. Then look at $(x,y,x',y',a_0)$ and continue as in Case 1d.

Case 2g: There is an edge between $B_2$ and $B_3$ which is switched. Continue as in 2f.

This completes all the cases, and thus the proof.
\end{proof}

\textbf{Remark.} The proof of part (iv) also shows that $\langle sw \rangle = \{f \in Sym(D): f$ preserves $P(x,y,z) \}$. Due to the importance of this relation, we give it a definition.

\begin{Def} Let $P_{sw}(x,y,z)$ be the 3-ary relation $P$ from the proof above.\end{Def}

%UNDERSTANDING ROT
%UNDERSTANDING ROT

The next reduct we analyse is $\langle rot \rangle$. The ideas and proofs are analogous to those of $\langle sw \rangle$ so for the sake of conciseness, we will not go into as much detail and may only sketch the idea for some proofs.

\textbf{Notation.} For what follows, $A,B,C \subseteq D$ are pairwise disjoint. For the ordered pair $(A,B)$, an \emph{outward edge} is an edge going from $A$ to $B$ and an \emph{inward edge} is one going from $B$ to $A$. We say $f$ behaves like $rot$ between $(A,B)$\footnote{We may also write `between $A$ and $B$'} if $f$ maps outward edges to inward edges to non-edges to outward edges. We let $rot_{A,B,C}$ be a function $D \to D$ which behaves like $id$ on $A,B$ and $C$ and behaves like $rot$ between $(A,B), (B,C)$ and $(C,A)$. If $C=(A \cup B)^c$, we just write $rot_{A,B}$. If in addition $C=\emptyset$, so that $B=A^c$, we just write $rot_A$.

Simple observations: $rot=rot_a$ for some $a \in D$. If $f$ behaves like $rot_{A,B,C}$ then $f^2$ and $f^{-1}$ behave like $rot_{C,B,A}$, and $f^3$ behaves like $id$. $rot_{B,C,A}$ and $rot_{C,A,B}$ both behave like $rot_{A,B,C}$.

As we did for $sw$, we describe a key idea in the following lemma. Let $a_1,\ldots, a_n,b_1,\ldots,b_m \in D$ be distinct elements. The idea is that $rot_{a_1}^2 \ldots rot_{a_n}^2 rot_{b_1} \ldots rot_{b_m}$ behaves like $rot_{A,B}$ where $A=\{a_1,\ldots a_n\}$ and $B=\{b_1,\ldots,b_n\}$. As before, this is not true as stated because the $a$'s and $b$'s are not fixed points of the functions involved.

\begin{lem} \label{understandingrot}
\begin{enumerate}[(i)]
\item $\gcl(rot) = \{f \in $Sym$(D): f$ behaves like $rot_{A,B}$ where $A,B$ are finite$\}.$
\item For any disjoint $A,B \subseteq D$, if $rot_{A,B} \in$ Sym$(D)$ then $rot_{A,B} \in \langle rot \rangle$.
\item Let $A,B$ be proper disjoint subsets of $D$ such that at least one of $A$ or $B$ is non-empty. If $rot_{A,B} \in$ Sym$(D)$, then $\langle rot_{A,B} \rangle = \langle rot \rangle$.
\item $\langle rot \rangle = \{f \in $Sym$(D): f$ behaves like $rot_{A,B}$ where $A,B$ are disjoint subsets of $D\} $.
\end{enumerate}
\end{lem}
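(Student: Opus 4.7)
The plan is to mirror the proofs for $\langle sw \rangle$ in \autoref{understandingsw}, with the three-cycle behaviour of $rot$ replacing the involution $sw$.

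For (i), the inclusion RHS $\subseteq$ LHS is proved by first showing $rot_{a'} \in \gcl(rot)$ for every $a' \in D$ via conjugation by an automorphism taking $a' \mapsto a$, then building $rot_{A,B}$ for finite $A = \{a_1,\ldots,a_n\}$ and $B = \{b_1,\ldots,b_m\}$ as a composition of the form $rot_{a_1}^2 \cdots rot_{a_n}^2 \, rot_{b_1} \cdots rot_{b_m}$ (the ``big idea'' preceding the lemma), with automorphism corrections threaded between factors to compensate for the fact that the $a_i$'s and $b_j$'s are not fixed by all later factors, exactly as in \autoref{understandingsw}(i). For LHS $\subseteq$ RHS, I write $f \in \gcl(rot)$ as $g_n \, rot^{\epsilon_n} \cdots g_1 \, rot^{\epsilon_1} \, g_0$; since $rot^{-1}$ behaves like $rot^2$, I may take all $\epsilon_i = 1$ and induct on $n$. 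In the inductive step, composing one more factor of $rot$ moves the point $a' := f'^{-1}(a)$ one step around the cycle $A \to B \to C \to A$ in the putative partition, keeping intact the form ``behaves like $rot_{A',B'}$ for finite $A', B'$''.

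Part (ii) then follows from (i) just as \autoref{understandingsw}(ii) follows from its (i): for a finite tuple $\bar{d}$, put $A' = A \cap \bar{d}$ and $B' = B \cap \bar{d}$, invoke (i) to get $rot_{A',B'} \in \gcl(rot)$, and use homogeneity to produce an element of $\gcl(rot)$ agreeing with $rot_{A,B}$ on $\bar{d}$. For (iii), by (ii) it suffices to show $rot \in \langle rot_{A,B} \rangle$; write $C := (A \cup B)^c$. The crucial observation is that if $\bar{b}$ is a tuple with $\bar{b} \cap B = \{b'\}$ and $\bar{b} \cap A = \emptyset$, then the only inter-cell pairs of $\bar{b}$ lie between $b'$ and $\bar{b} \setminus \{b'\} \subseteq C$, and the $(B,C)$-rotation component of $rot_{A,B}$ acts on them exactly as $rot_{b'}$; hence $rot_{A,B}(\bar{b}) \cong rot_{b'}(\bar{b}) \cong rot(\bar{b})$. (Symmetrically, a tuple with $\bar{b} \cap A = \{a'\}$ and $\bar{b} \cap B = \emptyset$ realises $rot^2$, and $rot = (rot^2)^2$ since $rot^3$ behaves like $\mathrm{id}$.) I then show, by induction on tuple length in the style of \autoref{understandingsw}(iii), that every tuple $\bar{a}$ admits such a copy $\bar{b}$, using that at least one of $A, B$ is non-empty and case-splitting on which of $A, B, C$ are finite, infinite-coinfinite, or cofinite.

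For (iv), I introduce the invariant $3$-ary relation
\[
P_{rot}(x,y,z) \; \defeq \; c(x,y) + c(y,z) + c(z,x) \equiv 1 \pmod{3},
\]
where $c(u,v) \in \mathbb{Z}/3\mathbb{Z}$ is defined to be $1$ if $E(u,v)$, $2$ if $E(v,u)$, and $0$ if $N(u,v)$. A short check shows $rot_a$ preserves $P_{rot}$: the case $a \notin \{x,y,z\}$ is trivial, and if $a = x$ the pair $(x,y)$ contributes $+1$ to the sum while $(z,x)$ contributes $-1$, leaving it invariant (symmetrically for $a = y, z$); hence $\langle rot \rangle$ preserves $P_{rot}$. (Note that $-$ does \emph{not} preserve $P_{rot}$, since it sends $\sigma := c(x,y) + c(y,z) + c(z,x)$ to $-\sigma$ and so swaps the residues $1$ and $2$, consistent with $- \notin \langle rot \rangle$.) For the converse direction, given $f \in \mathrm{Sym}(D)$ not behaving like any $rot_{A,B}$, I build a candidate partition $D = A_0 \sqcup B_0 \sqcup C_0 \sqcup A_1 \sqcup B_1 \sqcup C_1 \sqcup \cdots$ by fixing a base point $a_0$ and classifying each further vertex by the rotation amount in $\{0, +1, -1\}$ that $f$ applies to the pair-type formed with $a_0$ (and iteratively with already-classified points); since paths in $(D,E)$ have length at most $2$, the partition exhausts $D$ after finitely many stages. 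A case analysis on the ways $f$ can fail to behave like $rot_{A,B}$ (where $A, B$ are taken from the partition) then produces a triple witnessing that $f$ violates $P_{rot}$. This last step is the main obstacle: the three-valued rotation defect per pair triples both the partition structure and the catalogue of failure configurations relative to \autoref{understandingsw}(iv), and keeping the rotation amounts around each triangle mutually consistent will be the bulk of the combinatorial work.
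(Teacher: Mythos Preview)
Parts (i) and (ii) match the paper. The differences are in (iii) and (iv).

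For (iii), the paper takes a genuinely different and shorter route than your three-part induction. With $C=(A\cup B)^c$, the paper first disposes of the case where one of $A,B,C$ is empty by imitating the two-part argument of \autoref{understandingsw}(iii). When all three are nonempty, it does \emph{not} attempt a direct induction; instead it invokes \autoref{genericdigraph}(v): either $(B\cup C,E|_{B\cup C})$ is isomorphic to the generic digraph, in which case one simply ignores $A$ and is back in a two-part situation, or the extension property fails in $B\cup C$, in which case there are finite $U,V,W\subset B\cup C$ whose associated quantifier-free formula $\phi(x)$ forces $x\in A$. Homogeneity then lets one realise any finite digraph with one vertex among $U\cup V\cup W$ (hence in $B$ or in $C$) and all remaining vertices satisfying $\phi$ (hence in $A$), yielding $rot$ or $rot^{-1}$ in $\langle rot_{A,B}\rangle$. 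Your direct induction may be salvageable, but with three cells the inductive step is substantially harder than in the $sw$ case, and you have not explained what replaces the crucial ``or $\{b_2,\dots,b_n\}$'' alternative that makes the two-part induction go through.

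For (iv), your $\mathbb Z/3$-valued invariant $\sigma$ is exactly the content of the paper's three relations $P_{rot,1},P_{rot,2},P_{rot,3}$ (they are the level sets of $\sigma$), and the ``changing one pair changes the orbit'' observation is precisely that altering one $c(u,v)$ shifts $\sigma$ by $\pm1$. However, the paper's argument is more direct than your contrapositive imitation of \autoref{understandingsw}(iv): it takes $f\in\langle rot\rangle$, fixes any $a\in D$, defines $A,B,C$ in a single step by the rotation amount $f$ applies to pairs $(a,x)$, and then verifies $f$ behaves like $rot_{A,B}$ on an arbitrary pair $(a_1,a_2)$ by looking at the single triple $(a,a_1,a_2)$---if $f$ did the wrong thing on $(a_1,a_2)$ it would change exactly one pair of that triple, contradicting preservation of the $P_{rot,i}$. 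This bypasses the iterated partition $A_0\sqcup B_0\sqcup C_0\sqcup A_1\sqcup\cdots$ and the large case analysis you anticipate. Note also that the paper uses all three level sets of $\sigma$; your single relation $P_{rot}=\{\sigma\equiv1\}$ suffices only after an extra step you omit, namely the cocycle identity $\sigma(x,y,w)+\sigma(x,w,z)+\sigma(w,y,z)\equiv\sigma(x,y,z)$, which together with the extension property lets one choose $w$ so that all three auxiliary triples lie in $\{\sigma\equiv1\}$ and hence deduce preservation of the full invariant from preservation of one level set.
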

\begin{proof}
For this proof, let $a \in D$ be the point such that $rot=rot_a$.

(i) RHS $\subseteq$ LHS. It suffices to show that $rot_{A,B} \in \gcl(rot)$. We start by showing that $rot_{a'} \in \gcl(rot)$ for all $a' \in D$. This is easy: let $g \in$ Aut$(D,E)$ map $a'$ to $a$ then consider $rot \circ g$. 

For the general case, let $A=\{a_1,\ldots,a_n\}$ and $B=\{b_1,\ldots,b_m\}$. The idea is to rotate twice about each element of $A$ and rotate once about each element of $B$ - we leave the details to the reader.

LHS $\subseteq$ RHS. Any $f \in \gcl(rot)$ can be written in the form $g_n rot^{\epsilon_n} \ldots g_1 rot^{\epsilon_1} g_0$ where for all $i, \epsilon_i \in \{1,-1\}$. Since $rot^{-1}$ behaves like $rot^2$, we can assume that $\epsilon_i=1$ for all $i$. We prove by induction on $n$ that there exist finite disjoint $A,B \subset D$ such that  $f$ behaves like $rot_{A,B}$. 

The base case $n=0$ is trivial, so assume that we know $h=g_{n-1} rot \ldots g_1 rot g_0$ behaves like $rot_{A,B}$ for finite $A,B$, and we consider $f=g_n rot\,h$. There are three cases depending on $a' \defeq h^{-1}(a)$. If $a' \notin A \cup B$, then $f$ behaves like $rot_{A,B \cup \{a'\}}.$ If $a' \in B$, then $f$ behaves like $rot_{A \cup \{a'\}, B \backslash \{a'\}}$. Lastly, if $a' \in A$, then $f$ behaves like $rot_{A \backslash \{a'\},B}$. This completes the induction and hence the proof.

(ii) This is straightforward - just unravel the definitions and use part (i).

(iii) Let $A,B \subseteq D$ be as described in the lemma, and let $C= (A \cup B)^c$. By (ii), we know that LHS $\subseteq$ RHS. To show the other direction, it suffices to show that $rot$ or $rot^{-1} \in \langle rot_{A,B} \rangle$. 

If one of $A,B$ or $C$ is empty, then we are done by imitating the corresponding argument for $\langle sw \rangle$. So assume $A,B$ and $C$ are all non-empty. Now, if $(B \cup C, E_{B \cup C}$ is isomorphic to the generic graph, then we can ignore $A$ and treat it as if it were empty, so again we can imitate the argument from the switching case to get the result.

Hence, assume that $B \cup C$ is not isomorphic to the generic digraph. This means there exist finite, pairwise disjoint $U,V,W \subset B \cup C$ such that if $x \in D$ satisfies $\phi(x) \defeq (\forall u \in U E(u,x)) \wedge (\forall v \in V E^*(v,x)) \wedge (\forall w \in W N(w,x))$, then $x \in A$.

Suppose that there exists $c \in C \cap (U \cup V \cup W)$. We will show that for all $(d_1,\ldots,d_n) \in D$, there exists $a_2,\ldots,a_n \in A$ such that $(c,a_2,\ldots,a_n) \cong (d_1,\ldots,d_n)$. By unravelling definitions, it is easy to see that this is sufficient to show that $rot \in \langle rot_{A,B} \rangle$. So, let $(d_1,\ldots,d_n) \in D$. Then let $(a_2,\ldots,a_n) \in D$ be such that $D \models \phi(a_2),\ldots,\phi(a_n)$ and $(c,a_2,\ldots,a_n) \cong (d_1,\ldots,d_n)$. Such $a_i$ exist by the homogeneity of $(D,E)$. Since $\phi(a_i)$ for all $i$, $(a_2,\ldots,a_n)$ has to be in $A$, as required, so $rot \in \langle rot_{A,B} \rangle$.

Now suppose that $C \cap (U \cup W \cup V) = \emptyset$, so there must be $b \in B \cap (U \cup V \cup W)$. By repeating the argument above, we can show that $rot^{-1} \in \langle rot_{A,B} \rangle$, so we are done.

(iv) From (ii) we have that RHS $\subseteq LHS$. To prove the other direction, we need to identify relations that $\langle rot \rangle$ preserves. These relations correspond to the orbits when you let $\gcl(rot)$ act on $(D,E)$. We describe the orbits diagrammatically:

\begin{center}
\begin{tikzpicture}[line cap=round,line join=round,>=triangle 45,x=0.2cm,y=0.2cm]
\begin{scope}[decoration={markings,mark=at position 0.5 with {\arrow[scale=0.5]{>}}}] 
\clip(2.,-9.) rectangle (69.,9.);
\draw[postaction={decorate}]  (8.,4.) -- (4.,8.);
\draw[postaction={decorate}]  (4.,4.) -- (4.,8.);
\draw[postaction={decorate}]  (8.,4.) -- (4.,4.);
\draw[postaction={decorate}]  (12.,4.) -- (16.,4.);
\draw[postaction={decorate}]  (16.,4.) -- (12.,8.);
\draw[postaction={decorate}]  (20.,8.) -- (20.,4.);
\draw[postaction={decorate}]  (24.,4.) -- (20.,8.);
\draw[postaction={decorate}]  (28.,8.) -- (28.,4.);
\draw[postaction={decorate}]  (28.,8.) -- (32.,4.);
\draw[postaction={decorate}]  (32.,4.) -- (28.,4.);
\draw[postaction={decorate}]  (34.,4.) -- (34.,8.);
\draw[postaction={decorate}]  (34.,4.) -- (38.,4.);
\draw[postaction={decorate}]  (34.,8.) -- (38.,4.);
\draw[postaction={decorate}]  (42.,8.) -- (46.,4.);
\draw[postaction={decorate}]  (54.,4.) -- (50.,4.);
\draw[postaction={decorate}]  (58.,8.) -- (58.,4.);
\draw[postaction={decorate}]  (62.,4.) -- (58.,4.);
\draw[postaction={decorate}]  (64.,4.) -- (64.,8.);
\draw[postaction={decorate}]  (4.,2.) -- (4.,-2.);
\draw[postaction={decorate}]  (8.,-2.) -- (4.,-2.);
\draw[postaction={decorate}]  (8.,-2.) -- (4.,2.);
\draw[postaction={decorate}]  (12.,-2.) -- (12.,2.);
\draw[postaction={decorate}]  (12.,-2.) -- (16.,-2.);
\draw[postaction={decorate}]  (16.,-2.) -- (12.,2.);
\draw[postaction={decorate}]  (24.,-2.) -- (20.,2.);
\draw[postaction={decorate}]  (32.,-2.) -- (28.,-2.);
\draw[postaction={decorate}]  (28.,2.) -- (32.,-2.);
\draw[postaction={decorate}]  (34.,2.) -- (34.,-2.);
\draw[postaction={decorate}]  (34.,-2.) -- (38.,-2.);
\draw[postaction={decorate}]  (34.,2.) -- (38.,-2.);
\draw[postaction={decorate}]  (42.,-2.) -- (42.,2.);
\draw[postaction={decorate}]  (42.,2.) -- (46.,-2.);
\draw[postaction={decorate}]  (54.,-2.) -- (50.,-2.);
\draw[postaction={decorate}]  (50.,-2.) -- (50.,2.);
\draw[postaction={decorate}]  (58.,-2.) -- (62.,-2.);
\draw[postaction={decorate}]  (64.,2.) -- (64.,-2.);
\draw[postaction={decorate}]  (8.,-8.) -- (4.,-4.);
\draw[postaction={decorate}]  (8.,-8.) -- (4.,-8.);
\draw[postaction={decorate}]  (16.,-8.) -- (12.,-4.);
\draw[postaction={decorate}]  (12.,-4.) -- (12.,-8.);
\draw[postaction={decorate}]  (12.,-8.) -- (16.,-8.);
\draw[postaction={decorate}]  (20.,-8.) -- (20.,-4.);
\draw[postaction={decorate}]  (24.,-8.) -- (20.,-4.);
\draw[postaction={decorate}]  (28.,-8.) -- (28.,-4.);
\draw[postaction={decorate}]  (28.,-4.) -- (32.,-8.);
\draw[postaction={decorate}]  (32.,-8.) -- (28.,-8.);
\draw[postaction={decorate}]  (34.,-4.) -- (38.,-8.);
\draw[postaction={decorate}]  (34.,-8.) -- (38.,-8.);
\draw[postaction={decorate}]  (42.,-4.) -- (42.,-8.);
\draw[postaction={decorate}]  (42.,-4.) -- (46.,-8.);
\draw[postaction={decorate}]  (50.,-8.) -- (50.,-4.);
\draw[postaction={decorate}]  (50.,-8.) -- (54.,-8.);
\draw[postaction={decorate}]  (58.,-4.) -- (58.,-8.);
\draw[postaction={decorate}]  (62.,-8.) -- (58.,-8.);
\begin{scriptsize}
\draw [fill=black] (4.,-2.) circle (1.0pt);
\draw [fill=black] (4.,2.) circle (1.0pt);
\draw [fill=black] (8.,-2.) circle (1.0pt);
\draw [fill=black] (4.,-4.) circle (1.0pt);
\draw [fill=black] (4.,-8.) circle (1.0pt);
\draw [fill=black] (8.,-8.) circle (1.0pt);
\draw [fill=black] (4.,4.) circle (1.0pt);
\draw [fill=black] (8.,4.) circle (1.0pt);
\draw [fill=black] (4.,8.) circle (1.0pt);
\draw [fill=black] (12.,-2.) circle (1.0pt);
\draw [fill=black] (12.,2.) circle (1.0pt);
\draw [fill=black] (16.,-2.) circle (1.0pt);
\draw [fill=black] (12.,-4.) circle (1.0pt);
\draw [fill=black] (12.,-8.) circle (1.0pt);
\draw [fill=black] (16.,-8.) circle (1.0pt);
\draw [fill=black] (12.,4.) circle (1.0pt);
\draw [fill=black] (16.,4.) circle (1.0pt);
\draw [fill=black] (12.,8.) circle (1.0pt);
\draw [fill=black] (20.,-2.) circle (1.0pt);
\draw [fill=black] (20.,2.) circle (1.0pt);
\draw [fill=black] (24.,-2.) circle (1.0pt);
\draw [fill=black] (20.,-4.) circle (1.0pt);
\draw [fill=black] (20.,-8.) circle (1.0pt);
\draw [fill=black] (24.,-8.) circle (1.0pt);
\draw [fill=black] (20.,4.) circle (1.0pt);
\draw [fill=black] (24.,4.) circle (1.0pt);
\draw [fill=black] (20.,8.) circle (1.0pt);
\draw [fill=black] (28.,-2.) circle (1.0pt);
\draw [fill=black] (28.,2.) circle (1.0pt);
\draw [fill=black] (32.,-2.) circle (1.0pt);
\draw [fill=black] (28.,-4.) circle (1.0pt);
\draw [fill=black] (28.,-8.) circle (1.0pt);
\draw [fill=black] (32.,-8.) circle (1.0pt);
\draw [fill=black] (28.,4.) circle (1.0pt);
\draw [fill=black] (32.,4.) circle (1.0pt);
\draw [fill=black] (28.,8.) circle (1.0pt);
\draw [fill=black] (34.,-2.) circle (1.0pt);
\draw [fill=black] (34.,2.) circle (1.0pt);
\draw [fill=black] (38.,-2.) circle (1.0pt);
\draw [fill=black] (34.,-4.) circle (1.0pt);
\draw [fill=black] (34.,-8.) circle (1.0pt);
\draw [fill=black] (38.,-8.) circle (1.0pt);
\draw [fill=black] (34.,4.) circle (1.0pt);
\draw [fill=black] (38.,4.) circle (1.0pt);
\draw [fill=black] (34.,8.) circle (1.0pt);
\draw [fill=black] (42.,-2.) circle (1.0pt);
\draw [fill=black] (42.,2.) circle (1.0pt);
\draw [fill=black] (46.,-2.) circle (1.0pt);
\draw [fill=black] (42.,-4.) circle (1.0pt);
\draw [fill=black] (42.,-8.) circle (1.0pt);
\draw [fill=black] (46.,-8.) circle (1.0pt);
\draw [fill=black] (42.,4.) circle (1.0pt);
\draw [fill=black] (46.,4.) circle (1.0pt);
\draw [fill=black] (42.,8.) circle (1.0pt);
\draw [fill=black] (50.,-2.) circle (1.0pt);
\draw [fill=black] (50.,2.) circle (1.0pt);
\draw [fill=black] (54.,-2.) circle (1.0pt);
\draw [fill=black] (50.,-4.) circle (1.0pt);
\draw [fill=black] (50.,-8.) circle (1.0pt);
\draw [fill=black] (54.,-8.) circle (1.0pt);
\draw [fill=black] (50.,4.) circle (1.0pt);
\draw [fill=black] (54.,4.) circle (1.0pt);
\draw [fill=black] (50.,8.) circle (1.0pt);
\draw [fill=black] (58.,-2.) circle (1.0pt);
\draw [fill=black] (58.,2.) circle (1.0pt);
\draw [fill=black] (62.,-2.) circle (1.0pt);
\draw [fill=black] (58.,-4.) circle (1.0pt);
\draw [fill=black] (58.,-8.) circle (1.0pt);
\draw [fill=black] (62.,-8.) circle (1.0pt);
\draw [fill=black] (58.,4.) circle (1.0pt);
\draw [fill=black] (62.,4.) circle (1.0pt);
\draw [fill=black] (58.,8.) circle (1.0pt);
\draw [fill=black] (64.,-2.) circle (1.0pt);
\draw [fill=black] (64.,2.) circle (1.0pt);
\draw [fill=black] (68.,-2.) circle (1.0pt);
\draw [fill=black] (64.,-4.) circle (1.0pt);
\draw [fill=black] (64.,-8.) circle (1.0pt);
\draw [fill=black] (68.,-8.) circle (1.0pt);
\draw [fill=black] (64.,4.) circle (1.0pt);
\draw [fill=black] (68.,4.) circle (1.0pt);
\draw [fill=black] (64.,8.) circle (1.0pt);
\end{scriptsize}
\end{scope}
\end{tikzpicture}
\end{center}

This diagram contains all the possible digraphs you can have on a triple in $D$. Each row of the diagram represents one of the orbits and hence, one of the relations that $\langle rot \rangle$ preserves. Let $P_{rot,1}, P_{rot,2}$ and $P_{rot,3}$ be the relations for the top, middle and bottom rows respectively. One feature worth noting is that given any finite triple in $D$, if you change the relation between exactly one pair of its vertices, you change the orbit the triple is in. For example, given a triple with only non-edges (so it is in $P_{rot,3}$), changing exactly one non-edge into an edge results in the triple no longer being in $P_{rot,3}$.

Now let $f \in \langle rot \rangle$. We know that $f$ preserves $P_{rot,i}$, $i=1,2,3$. We want to find disjoint $A,B \subseteq D$ such that $f$ behaves like $rot_{A,B}$. We do this as follows. Pick any $a \in D$. Let $A=\{a\} \cup \{x \in D: E(a,x) \wedge E(f(a,x))$ or $E^*(a,x) \wedge E^*(f(a,x))$ or $N(a,x) \wedge N(f(a,x)) \}$.  Let $B=\{x \in D: E(a,x) \wedge E^*(f(a,x))$ or $E^*(a,x) \wedge N(f(a,x))$ or $N(a,x) \wedge E(f(a,x)) \}$.

We claim that $f$ behaves like $rot_{A,B}$. This amounts to case checking, which we leave to the reader. We provide one case as an example.

Case 1. We need to show that $f$ behaves like $id$ on $A$.  Suppose not, and let $a_1,a_2 \in A$ witness this fact.  Then we have $(a,a_1,a_2)$ such that $f$ only changes what happens between $a_1$ and $a_2$, contradicting that $f$ preserves $P_{rot,i}$.
\end{proof}

The relations introduced in this proof are important, so we give them a definition.

\begin{Def} For $i=1,2,3$, let $P_{rot,i}(x,y,z)$ be the relations defined in the proof of part (iv) of the lemma above.
\end{Def}

%UNDERSTANDING -SW AND -ROT, SYM(D) AND AUT(GAMMA)

The descriptions of $\langle -, sw \rangle$ and $\langle -, rot \rangle$ are straightforward:

\begin{lem} \label{understandingjoinwith-}
\begin{enumerate}[(i)]
\item $\langle -,sw \rangle = \{f \in$ Sym$(D): f=g$ or $- \circ g$ for some $g \in \langle sw \rangle \}$.
\item $\langle -,rot \rangle = \{f \in$ Sym$(D): f=g$ or $- \circ g$ for some $g \in \langle rot \rangle \}$.
\end{enumerate}
\end{lem}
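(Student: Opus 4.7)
The approach for both parts is parallel; I describe it for part (i), setting $H := \langle sw \rangle$ and $S := H \cup (- \circ H)$, with the obvious modification (replace $sw$ by $rot$) for part (ii). The inclusion $S \subseteq \langle -,sw \rangle$ is immediate since both $H$ and $-$ lie in $\langle -,sw \rangle$. The task therefore reduces to showing that $S$ is itself a closed subgroup of Sym$(D)$ containing the generators Aut$(D,E) \cup \{-,sw\}$; minimality of $\langle -,sw \rangle$ then forces the reverse inclusion.

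The key step is showing that $-$ normalizes $H$, i.e.\ $-H-^{-1} \subseteq H$. By \autoref{understandingsw}(iv), every $g \in H$ behaves like $sw_A$ for some $A \subseteq D$, and a short edge-by-edge check shows that reversing an edge, applying $sw_A$, and reversing again has the same net effect as switching according to $sw_{-(A)}$; hence $-g-^{-1}$ behaves like $sw_{-(A)}$ and so lies in $H$ again. The analogous statement for (ii), using \autoref{understandingrot}(iv), is that if $g$ behaves like $rot_{A,B,C}$ then $-g-^{-1}$ behaves like $rot_{-(C),-(B),-(A)}$: the two outer edge-reversals invert the direction of the $rot$ 3-cycle (out $\to$ in $\to$ non becomes out $\to$ non $\to$ in), and this inverse cycle is realised by the reverse ordering of the partition. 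I expect this normalization computation to be the main (though mild) obstacle, boiling down to tracking how edges, reversed edges, and non-edges interact with the partitioning sets.

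With normalization in hand, closure of $S$ under composition is routine: since $-^2 \in$ Aut$(D,E) \subseteq H$, the only non-trivial product $(-g_1)(-g_2) = (-g_1-^{-1}) \cdot -^2 \cdot g_2$ lies in $H \cdot H \cdot H = H$, and the other three product types are similar. For inverses, $(-g)^{-1} = g^{-1}\cdot -^{-1}$; since $-^{-1}$ behaves like $-$, the important remark in Section~3 gives $-^{-1} \in -\cdot\,$Aut$(D,E)$, so $(-g)^{-1} \in H \cdot - \cdot$ Aut$(D,E) \subseteq - \cdot H \subseteq S$ after one further use of normalization. Finally $H$ is closed by construction, left-multiplication by $-$ is a homeomorphism of Sym$(D)$, so $- \cdot H$ is closed, and $S$ is a union of two closed sets, hence closed. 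Thus $S$ is a closed subgroup of Sym$(D)$ containing the required generators, which completes the proof.
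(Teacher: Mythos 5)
Your proof is correct, but it takes a genuinely different route from the paper. The paper proves the non-trivial inclusion ($\langle -,sw \rangle$ is contained in the right-hand side) semantically: it exhibits a relation $P_{sw,w} = (P_{sw}(\bar{x}) \leftrightarrow P_{sw}(\bar{y}))$ preserved by the whole join, and then splits on whether a given $f$ preserves $P_{sw}$ itself (in which case $f \in \langle sw \rangle$ by \autoref{understandingsw}) or not (in which case $- \circ f$ does, so $f \in - \circ \langle sw \rangle$ after adjusting $-^{-1}$ to $-$ by an automorphism); part (ii) is identical with $P_{rot,w}$. You instead verify directly that $\langle sw \rangle \cup (-\circ\langle sw \rangle)$ is a closed group containing the generators, with the key computation being that conjugation by $-$ sends the behaviour $sw_A$ to $sw_{-(A)}$ and $rot_{A,B,C}$ to $rot_{-(C),-(B),-(A)}$ (equivalently $rot_{-(B),-(A)}$, which is the form needed to invoke \autoref{understandingrot}(iv)); your cycle computation out $\to$ non $\to$ in $\to$ out checks out, and the topological closure argument (left translation is a homeomorphism, finite unions of closed sets are closed) is fine. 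Both arguments lean on parts (iv) of \autoref{understandingsw} and \autoref{understandingrot}, so neither is more elementary in substance. What the paper's route buys is the relations $P_{sw,w}$ and $P_{rot,w}$ themselves, which are reused later (in \autoref{sublattice} and throughout Section 5, where non-preservation of these relations supplies the constants fed into the canonical-function machinery); what your route buys is an explicit index-two coset decomposition and a normalization statement that makes the group structure of the join transparent without having to guess an invariant relation.
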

\begin{proof} 
(i)$\langle -,sw \rangle$ preserves the 6-ary relation $P_{sw,w} \defeq P_{sw}(\bar{x}) \leftrightarrow P_{sw}(\bar{y})$. Now let $f \in \langle -,sw \rangle$. If $f$ preserves $P_{sw}$, then by \autoref{understandingsw} $f \in \langle sw \rangle$. Now suppose that $f$ does not preserve $P_{sw}$. Since $f$ preserves $P_{sw,w}$, we have that $- \circ f$ preserves $P_{sw}$, so $- \circ f=g \in \langle sw \rangle$. Hence, $f=-^{-1}g$. We can replace $-^{-1}$ by $-$ because $-^{-1}=-\circ h$ for some $h \in$ Aut$(D,E)$.

(ii)$\langle -,rot \rangle$ preserves the 6-ary relation $P_{rot,w} \defeq (P_{rot,1}(\bar{x}) \wedge P_{rot,1}(\bar{y})) \vee (P_{rot,2}(\bar{x}) \wedge P_{rot,2}(\bar{y}))$. Now let $f \in \langle -,rot \rangle$. If $f$ preserves $P_{rot,1}$, then by \autoref{understandingrot} $f \in \langle rot \rangle$. Now suppose that $f$ does not preserve $P_{rot,1}$. Since $f$ preserves $P_{rot,w}$, we have that $- \circ f$ preserves $P_1$, so $- \circ f=g \in \langle rot \rangle$. Hence, $f=-^{-1}g$. We can replace $-^{-1}$ by $-$ because $-^{-1}=-\circ h$ for some $h \in$ Aut$(D,E)$.
\end{proof}

The next lemmas will give us conditions on a group $G$ to be equal to Sym$(D)$ or to contain Aut($\Gamma$).

\begin{lem} \label{equalsSymD} Let $G \leq$ Sym$(D)$ be a closed supergroup of Aut$(D,E)$.
\begin{enumerate}[(i)]
\item If $G$ is $n$-transitive for all $n \in \mathbb{N}$, then $G=$ Sym$(D)$.
\item If $G$ is $n$-homogeneous for all $n \in \mathbb{N}$, then $G=$ Sym$(D)$.
\item Suppose that whenever $A \subset D$ is finite and has edges, there exists $g \in G$ such that $g(A)$ has less edges than in $A$ (i.e. $|\{(x,y) \in A^2: E(g(x),g(y))\}| <  |\{(x,y) \in A^2: E(x,y)\}|)$. Then, $G=$ Sym$(D)$.
\item Suppose that there exists a finite $A \subset D$ and $g \in G$ such that $g$ behaves like $id$ on $D\backslash A$, $g$ behaves like $id$ between $A$ and $D\backslash A$, and, $g$ deletes at least one edge in $A$. Then, $G=$ Sym$(D)$.
\end{enumerate}
\end{lem}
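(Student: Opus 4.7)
My plan is to prove (i) directly, then to deduce (iii) from (i) by an iterative reduction argument, and finally to obtain (ii) and (iv) as applications of (iii); this means I treat them in the order (i), (iii), (ii), (iv) rather than as listed. Part (i) is a standard density calculation: since $G$ is closed, $\sigma\in\text{Sym}(D)$ lies in $G$ iff for every finite $F=\{a_1,\dots,a_n\}\subset D$ there is $g\in G$ agreeing with $\sigma$ on $F$, which is exactly the conclusion of $n$-transitivity applied to $(a_1,\dots,a_n)$ and $(\sigma(a_1),\dots,\sigma(a_n))$.

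For (iii), I iterate the hypothesis: given any $n$-tuple $\bar{a}$ with underlying set $A_0$, I repeatedly apply elements of $G$ that strictly decrease the edge count on the image; after at most $|E(A_0)|$ steps one reaches an edge-free set. Since all edge-free $n$-tuples of distinct elements realise the same type, \autoref{genericdigraph}(ii) places them in a single $\text{Aut}(D,E)$-orbit. So for any two $n$-tuples $\bar{a},\bar{b}$, sending each to an edge-free tuple via $G$ and bridging the results by an automorphism establishes $n$-transitivity, and (i) yields $G=\text{Sym}(D)$. Part (ii) then follows as an immediate corollary: for a finite $A\subset D$ with edges, $|A|$-homogeneity maps $A$ to some edge-free $|A|$-subset of $D$ (one exists by \autoref{genericdigraph}(iv)), so the hypothesis of (iii) is satisfied.

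The real work is in (iv). Fix a finite $X=\{x_1,\dots,x_k\}\subset D$ containing an edge $(x_1,x_2)$. Because $|E(g(A))|<|E(A)|$, some pair $(a_1,a_2)\in A^2$ has $E(a_1,a_2)$ together with $N(g(a_1),g(a_2))$; fix such a pair. Using the extension property (\autoref{genericdigraph}(iii)) iteratively, I find distinct $y_3,\dots,y_k\in D\setminus A$ so that $(a_1,a_2,y_3,\dots,y_k)\cong(x_1,x_2,x_3,\dots,x_k)$; at each stage the finitely many remaining elements of $A$ are absorbed into the non-edge set $W$ of the extension property, which forces $y_i\notin A$. Homogeneity then gives $h_0\in \text{Aut}(D,E)$ realising this isomorphism, and $g\circ h_0\in G$. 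A case check using the two ``behaves like $\text{id}$'' clauses shows that every edge of $X$ apart from $(x_1,x_2)$ survives in $(g\circ h_0)(X)$, while $(x_1,x_2)$ is sent to a non-edge; hence $|E((g\circ h_0)(X))|=|E(X)|-1$, and (iii) finishes the argument. The main obstacle is this embedding step: one must simultaneously land a chosen edge of $X$ on the specific deleted edge of $A$, keep the other vertices of $X$ outside the finite set $A$, and preserve the isomorphism type of the tuple -- the extension property provides exactly enough flexibility, but combining the type constraints with the avoidance constraint requires some careful bookkeeping when assembling the triple $(U,V,W)$.
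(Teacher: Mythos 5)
Your proposal is correct and follows essentially the same route as the paper: part (i) by closedness of $G$, parts (ii)--(iii) by reducing finite sets to edge-free configurations (which form a single orbit by homogeneity and quantifier elimination), and part (iv) by transporting an arbitrary edge onto the deleted edge of $A$ via an automorphism and invoking (iii). The only differences are cosmetic --- you derive (ii) from (iii) rather than (iii) from (ii), and in (iv) you make explicit the step of keeping the remaining vertices outside $A$, which the paper leaves implicit.
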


Remark: $G$ is $n$-transitive if for all pairs of tuples $\bar{x}, \bar{y} \in D^n$, there exists $g \in G$ such that $g(\bar{x})=\bar{y}$.  $G$ is $n$-homogeneous if for all subsets $A,B \subset D$ of size $n$, there exists $g \in G$ such that $g(A)=B$.

\begin{proof}
(i) Let $f \in$ Sym$(D)$. We want to show that $f \in G$. Since $G$ is closed, it suffices to show that for all finite tuples $\bar{a} \in D$, there exists $g \in G$ such that $g$ maps $\bar{a}$ to $\bar{f(a)}$. But $G$ is $n$-transitive for all $n$, so we can always find an appropriate $g$, so we are done.

(ii) We will show that $G$ is $n$-transitive for all $n$. Let $\bar{a},\bar{b}$ be tuples of length $n$ in $D$. Let $f \in G$ be such that $f(\bar{a})$ is empty; this is possible as $G$ is $n$-homogeneous. Similarly, let $g \in G$ be such that $g(\bar{b})$ is empty. Now consider the map $h: f(a_i) \mapsto g(b_i)$. This is an isomorphism of digraphs so can be extended to an automorphism $h'$ of Aut$(D,E)$, by homogeneity. But now $g^{-1}h'f \in G$ maps the tuple $\bar{a}$ to the tuple $\bar{b}$, as required.

(iii) We will show that $G$ is $n$-homogeneous for all $n$. It suffices to show that for all finite $A \subset D$, we can map $A$ to the empty digraph. We prove this by induction on the number of edges $k$ in $A$. The base case $k=0$ is trivial. Now let $A$ have $k$ edges. By assumption, there is $f \in G$ such that $f(A)$ has $k'<k$ edges. By the inductive hypothesis, there is $g \in G$ such that $g(f(A))$ is the empty digraph, so we are done.

(iv) Let $A$ and $g$ be as in the lemma. We will show that for all finite $B \subset D$, if $B$ contains edges then there is $f \in G$ such that $f(B)$ has less edges than $B$ - this suffices by (iii). So let $B \subset D$ be finite. Let $bb'$ be an edge in $B$, and let $aa' \in A$ be an edge that is deleted by $g$. Let $h$ be an automorphism mapping $bb'$ to $aa'$. Then $gh \in G$ and $gh(B)$ contains less edges than in $B$, as required.
\end{proof}

Before we describe conditions for $G$ to contain Aut$(\Gamma)$, we first establish a fact we have mentioned earlier, which is that $\Gamma$ is indeed the random graph.

\begin{lem} $\Gamma$ is isomorphic to the random graph.
\end{lem}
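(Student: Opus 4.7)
The plan is to invoke the standard characterisation of the random graph: a countable graph is isomorphic to the random graph if and only if it satisfies the graph extension property, namely that for every pair of finite disjoint $U, W \subset V$ there exists $x \in V \setminus (U \cup W)$ adjacent to every element of $U$ and non-adjacent to every element of $W$. Since $D$ is countable and $E_\Gamma(x,y) \defeq E(x,y) \vee E^*(x,y)$ is irreflexive and symmetric, $\Gamma$ is a countable graph, so the only work is verifying this extension property.

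To verify it, fix finite disjoint $U, W \subset D$. I want $x \in D \setminus (U \cup W)$ with $E_\Gamma(x,u)$ for all $u \in U$ and $\neg E_\Gamma(x,w)$ for all $w \in W$. By part (iii) of \autoref{genericdigraph}, the digraph extension property applied to the triple of pairwise disjoint sets $(U, \emptyset, W)$ furnishes some $x \in D \setminus (U \cup W)$ with $E(x,u)$ for every $u \in U$ and $N(x,w)$ for every $w \in W$. This $x$ works: $E(x,u)$ immediately gives $E_\Gamma(x,u)$, and $N(x,w)$ means both $\neg E(x,w)$ and $\neg E^*(x,w)$, so $\neg E_\Gamma(x,w)$.

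Thus $\Gamma$ is a countable graph satisfying the extension property, hence isomorphic to the random graph. There is no real obstacle here; the only subtle point is remembering to check symmetry and irreflexivity of $E_\Gamma$ so that $\Gamma$ qualifies as a graph, and to pick the correct sets (putting $V = \emptyset$ in the digraph extension property) so that the witness delivered for $(D,E)$ automatically gives adjacencies, not just directed edges, in $\Gamma$.
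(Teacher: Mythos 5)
Your proof is correct and follows exactly the same route as the paper: reduce to the graph extension property for $\Gamma$ and obtain the witness by applying the digraph extension property of \autoref{genericdigraph} to the triple $(U,\emptyset,W)$. The extra remarks about irreflexivity and symmetry of $E_\Gamma$ are fine but not needed beyond what the paper already takes for granted.
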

\begin{proof}
Recall that we defined $\Gamma$ to be $(D,E_{\Gamma})$, where $E_{\Gamma}(x,y) \defeq E(x,y) \vee E(y,x)$. To show $\Gamma$ is isomorphic to the random graph, it suffices to show it satisfies the extension property of the random graph. So let $U,W \subset D$ be finite disjoint - we need to find $x \in D \backslash (U \cup W)$ such that $E_{\Gamma}(x,u)$ for all $u \in U$ and $N(x,w)$ for all $w \in W$. Apply the extension property of the digraph (\autoref{genericdigraph}) to $U, \emptyset, W$ to find an appropriate $x$.
\end{proof}

\begin{lem} \label{containsgamma} Let $G \leq$ Sym$(D)$ be a closed supergroup of Aut$(D,E)$.
\begin{enumerate}[(i)]
\item Suppose that whenever $a_1,\ldots,a_n, b_1, \ldots, b_n \in D$ satisfy $N(a_i,a_j) \leftrightarrow N(b_i,b_j)$ for all $i,j$, there exists $g \in G$ such that $g(\bar{a})=\bar{b}$. Then $G \geq$ Aut$(\Gamma)$.
\item Suppose that for all $A=\{a_1,\ldots,a_n\} \subset D$, there exists $g \in G$ such that for all edges $a_ia_j$ in $A$, $E(g(a_i),g(a_j))$ iff $i<j$. (Intuitively, such a $g$ is switching the edges so they all point in the same direction.) Then, $G \geq$ Aut$(\Gamma)$.
\item Suppose that there exists a finite $A \subset D$ and $g \in G$ such that $g$ behaves like $id$ on $D \backslash A$, $g$ behaves like $id$ between $A$ and $D \backslash A$, and, $g$ switches the direction of (at least) one edge in $A$. Then, $G \geq$ Aut$(\Gamma)$.
\end{enumerate}
\end{lem}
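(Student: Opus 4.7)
The plan is a chain of implications: (i) follows directly from the closedness of $G$; (ii) will be deduced by showing that it implies the hypothesis of (i); and (iii) will be deduced by showing that it implies the hypothesis of (ii).

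For (i), take $f \in$ Aut$(\Gamma)$. Since $G$ is closed, it suffices to show that for every finite tuple $\bar{a}$, some $g \in G$ satisfies $g(\bar{a}) = f(\bar{a})$. Because $f$ preserves $N$, the tuples $\bar{a}$ and $f(\bar{a})$ agree on every non-edge, so the hypothesis of (i) supplies the required $g$. Hence $f$ lies in the closure of $G$, which equals $G$.

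For (ii), I verify the hypothesis of (i). Given $\bar{a}, \bar{b}$ with matching non-edge patterns, apply the hypothesis of (ii) to each indexed tuple to obtain $g_1, g_2 \in G$ such that in both $g_1(\bar{a})$ and $g_2(\bar{b})$ every edge points from lower to higher index. Matching non-edges gives matching edge positions (the only alternatives at each pair are edge or non-edge), so $g_1(\bar{a})$ and $g_2(\bar{b})$ realise the same quantifier-free type. Homogeneity of $(D,E)$ then provides $h \in$ Aut$(D,E) \subseteq G$ with $h g_1(\bar{a}) = g_2(\bar{b})$, and so $g_2^{-1} h g_1 \in G$ sends $\bar{a}$ to $\bar{b}$.

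For (iii), I verify the hypothesis of (ii) by induction on the number of backward edges of an indexed tuple $B = (b_1,\ldots,b_n)$, i.e.\ edges $b_i b_j$ with $i < j$ but $E(b_j, b_i)$. The base case is the identity. For the inductive step, pick a backward edge $b_j b_i$ and let $pq$ be the edge of $A$ that $g$ switches, with $E(p,q)$. By the extension property together with homogeneity, choose $h \in$ Aut$(D,E)$ sending $b_j \mapsto p$, $b_i \mapsto q$, and $b_k \mapsto c_k \in D \setminus A$ for $k \notin \{i,j\}$, preserving every relation of $B$. Then $gh \in G$ acts on $B$ as follows: the edge at positions $(i,j)$ is reversed to forward because $g$ switches $pq$; the relations between $\{b_i, b_j\}$ and the other $b_k$'s are preserved because $g$ behaves like the identity between $A$ and $D \setminus A$; and the relations among the remaining $b_k$'s are preserved because $g$ behaves like the identity on $D \setminus A$. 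Hence $gh(B)$ has one fewer backward edge than $B$, and applying the inductive hypothesis to $gh(B)$ yields the required element of $G$ as a composition.

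The main obstacle is the inductive step of (iii): one must verify that the single local switch at $pq$ does not corrupt any other relation inside $B$. This is handled by the extension property, which produces the $h$ placing the other $b_k$'s in $D \setminus A$, together with the two ``identity-like'' clauses in the hypothesis of (iii), which ensure $g$ leaves the rest alone.
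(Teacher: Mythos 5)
Your proposal is correct and takes essentially the same approach as the paper: (i) from closedness, (ii) by normalising both tuples and applying homogeneity, and (iii) via the key observation that one can flip a single edge of a finite configuration while fixing the rest by placing that edge on the switched edge of $A$ and the remaining points outside $A$. The only (cosmetic) difference is that you route (iii) through the hypothesis of (ii) by induction on backward edges, whereas the paper reduces (iii) directly to (i) by aligning edges one at a time.
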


\begin{proof}
(i) Let $f \in$ Aut$(\Gamma)$ and let $\bar{a} \in D$ be a finite tuple. We need to find $g \in G$ such that $g(\bar{a})=f(\bar{a})$. Since $f \in$ Aut$(\Gamma)$, we have that $N(a_i,a_j) \leftrightarrow N(f(a_i),f(a_j))$ for all $i,j$. Hence, by the assumptions given in the lemma, there exists an appropriate $g \in G$.

(ii) Let $\bar{a}$ and $\bar{b} \in D$ satisfy $N(a_i,a_j) \leftrightarrow N(b_i,b_j)$ - we will show that there is $f \in G$ s.t $f(\bar{a})=\bar{b}$. Let $g_1 \in G$ be a function such that for all edges $a_ia_j \in A$, $E(g(a_i),g(a_j))$ iff $i<j$; such a function exists by assumption. Let $g_2 \in G$ be the corresponding function for $\bar{b}$. By construction, $g_1(\bar{a})$ and $g_2(\bar{a})$ are isomorphic so there is an automorphism $h \in$ Aut$(D,E)$ mapping $g_1(\bar{a})$ to $g_2(\bar{a})$. But then $g_2^{-1}hg_1(\bar{a})=\bar{b}$. Hence we are done by part (i).

(iii) Let $A$ and $g$ be as stated in the lemma, and let $aa' \in A$ be an edge whose direction is switched by $g$.

Claim: Let $\bar{b} \in D$ be finite and let $bb'$ be any edge in $\bar{b}$. Then there exists $f \in G$ such that $f$ switches the direction of $bb'$ and behaves like $id$ everywhere else on $\bar{b}$. This is easy: By homogeneity, there exists $h \in$ Aut$(D,E)$ such that $h(bb')=aa'$ and $h(\bar{b}) \cap A = \{a,a'\}$. Then $f=gh$ is the function we want.

Now suppose we have two tuples $\bar{b},\bar{c}$ as in the statement of (i); we want to find a function in $G$ mapping one to the other. We do this by repeatedly using the above claim to switch the edges in $\bar{b}$ until they are all aligned with the edges in $\bar{c}$. 
\end{proof}

%NEW SECTION
%NEW SECTION

\section{$\mathcal{L}$ is a sublattice of the reducts of $(D,E)$}
Before we begin please note a convention that we will use for the remainder of the article. There will be proofs where we want to show that we can map a digraph $A$ to a related digraph $B$. Often, the function will be the composition of a sequence of functions $f_1,f_2,\ldots$, where the definition of each one will depend on those defined earlier. For example, suppose we have defined $f_1$ and $f_2$, and $f_3$ is going to be a switching function. The convention is that we will say `Let $f_3$ be $sw_{A'}$' (where $A'$ will be a particular subset of $A$), in place of the strictly correct phrase `Let $f$ be $sw_{f_2f_1(A')}$'.

This may seem odd, but it has benefits. First, the proofs will be easier to follow and will better match the underlying intuition behind the argument. Second, with this convention in place, we often avoid needing to name the functions: We can now use phrases like `First switch about the subset $A_1$, then apply $rot$ about the point $a$', whereas without the convention we would have to say `...then apply $rot$ about the point which is the current image of $a$'.

\begin{lem} \label{sublattice}
\begin{enumerate}[(i)]
\item $\langle - \rangle, \langle sw \rangle$ and $\langle rot \rangle$ are proper reducts of Aut$(D,E)$.
\item $\langle - \rangle, \langle sw \rangle$ and $\langle rot \rangle$ are not reducts of each other.
\item $\langle -, sw \rangle$ is a proper reduct of $\langle - \rangle$ and $\langle sw \rangle$, and is not equal to Sym$(D)$.
\item $\Gamma$ is a proper reduct of $\langle -, sw \rangle$
\item $\langle -,rot \rangle$ is a proper reduct of $\langle - \rangle$ and $\langle rot \rangle$, and is not equal to Sym$(D)$.
\item The join of $\langle rot \rangle$ and $\langle sw \rangle$ is Sym$(D)$.
\item The meet of $\langle sw \rangle$ and $\langle - \rangle$ is Aut$(D)$.
\item The meet of $\langle rot \rangle$ and $\langle sw_{\Gamma}, -_{\Gamma} \rangle$ is Aut$(D)$.
\item The meet of $\langle -,rot \rangle$ and $\langle sw_{\Gamma}, -_{\Gamma} \rangle$ is $\langle - \rangle$.
\end{enumerate}
\end{lem}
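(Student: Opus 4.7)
The plan is to handle the nine parts using the toolkit built up in the section: the concrete descriptions of $\langle - \rangle$, $\langle sw \rangle$, $\langle rot \rangle$, $\langle -,sw \rangle$, and $\langle -,rot \rangle$ from \autoref{understanding-}--\autoref{understandingjoinwith-}; the relations $P_{sw}$ and $P_{rot,i}$ preserved by these groups; the $\Gamma$-level relation $E_\Gamma$; and the criteria of \autoref{equalsSymD} and \autoref{containsgamma}. For (i) and (ii) I would verify properness and pairwise non-reduction by noting that each of the three basic groups preserves a relation ($P_{sw}$, $P_{rot,i}$, or $E_w$) that the others violate; for (iii) and (v) the joins are proper supergroups of each factor by \autoref{understandingjoinwith-} and still preserve the 6-ary ``weakened'' relations $P_{sw,w}$ and $P_{rot,w}$, so neither equals Sym$(D)$. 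For (iv), both $-$ and $sw$ preserve $E_\Gamma$, so $\langle -,sw\rangle\subseteq$ Aut$(\Gamma)$; strictness is witnessed by any graph automorphism that freely reverses a directed edge of $(D,E)$.

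For part (vi) I plan to apply \autoref{equalsSymD}(iv): the composition $sw \circ rot$ about a common vertex $a$ turns $E$-in edges at $a$ into non-edges, and after conjugating by an automorphism of $(D,E)$ one can localise the effect to a finite set that loses an edge. For (vii), an $f\in\langle sw\rangle\cap\langle -\rangle$ must behave like some $sw_A$ and simultaneously like $id$ or $-$. Comparison on a 3-tuple $\bar a$ with two vertices in $A$ joined by an edge and a third in $A^c$ (with carefully chosen cross-edges) shows that $sw_A(\bar a)$ and $-(\bar a)$ have different in-/out-degree distributions, so they are not isomorphic as digraphs unless $A$ contains no edges; by symmetry the same obstruction applies to $A^c$, and the generic digraph cannot be bipartite, so $f$ must behave like $id$ and lies in Aut$(D,E)$.

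Part (viii) is the central combinatorial content. I would use the 5-ary relation $R$ recording the parity of the number of $E_\Gamma$-pairs on any 5 distinct vertices; this is preserved by $\langle sw_\Gamma, -_\Gamma \rangle$ because $-_\Gamma$ flips all 10 pairs (even) while $sw_\Gamma$ flips either $0$ or $4$ pairs (both even). For $f$ behaving like $rot_{A,B,C}$, an analysis of the 5-tuple $\{a, v_1, \ldots, v_4\}$ with $a$ in one part and $v_i$ ranging over another shows that preservation of $R$ for all such tuples forces the ``flip status'' of cross-pairs from $a$ to be constant on $B$ and on $C$; chasing these uniformity constraints across $a \in A$, $b \in B$, $c \in C$ yields a rigid structure (no edges in the cyclic directions $A\to B$, $B\to C$, $C\to A$, or all of them) which by a triple-extension argument (feeding $U = \{a,b,c\}$, $V=W=\emptyset$ into \autoref{genericdigraph}(iii)) contradicts the extension property unless two of $A,B,C$ are empty. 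The main obstacle is the case analysis covering all possible non-trivial partitions, especially when some part is very small; the core reason it succeeds is that the generic digraph has too many local configurations to support the rigid patterns required for $rot_{A,B,C}$ to respect $R$.

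Finally, (ix) follows from (viii) as a short corollary. The containment $\langle -\rangle\subseteq\langle -,rot\rangle$ is immediate, and $-$ preserves $E_\Gamma$ so $-\in$ Aut$(\Gamma)\subseteq\langle sw_\Gamma,-_\Gamma\rangle$, giving $\langle -\rangle\subseteq\langle sw_\Gamma,-_\Gamma\rangle$. Conversely, by \autoref{understandingjoinwith-}(ii) any $f$ in the meet has the form $g$ or $-\circ g$ with $g\in\langle rot\rangle$; since $-\in\langle sw_\Gamma,-_\Gamma\rangle$, the element $g$ also lies in that group, so $g\in\langle rot\rangle\cap\langle sw_\Gamma,-_\Gamma\rangle=$ Aut$(D,E)$ by (viii). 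Hence $f$ is either an automorphism or behaves like $-$, and in both cases $f\in\langle -\rangle$ by \autoref{understanding-}.
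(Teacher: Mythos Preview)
Your treatment of (i)--(v), (vii), and (ix) is essentially the paper's, though for (vii) you are working harder than necessary: once $f$ behaves like $sw_A$, simply note that $A$ or $A^c$ contains an edge (both are nonempty, and the generic digraph is not edgeless on either side), and $sw_A$ fixes that edge's direction, so $f$ cannot behave like $-$; hence $f\in$ Aut$(D,E)$ by \autoref{understanding-}. No 3-tuples or degree counts are needed.

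Part (vi) has a genuine gap. The map $sw_a\circ rot_a$ sends outward edges at $a$ to outward edges, inward edges to non-edges, and non-edges to inward edges. This has a \emph{global} effect on all pairs involving $a$, and conjugating by an automorphism cannot localise it to a finite set; so \autoref{equalsSymD}(iv) does not apply. Nor does \autoref{equalsSymD}(iii) apply directly: on a finite set containing $a$, the number of edges changes by $(\#\text{non-edges at }a)-(\#\text{inward edges at }a)$, which need not be negative. The paper instead uses \autoref{equalsSymD}(iii) with a more deliberate composition: given a finite $A$ and a vertex $a\in A$ with a neighbour, one switches about the out-neighbourhood of $a$ in $A$, applies $rot_a^2$, switches about the (original) neighbourhood, and applies $rot_a$ once more. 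This kills \emph{all} edges at $a$ inside $A$ while leaving the rest of $A$ unchanged, so the edge count strictly drops.

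For (viii) you take a genuinely different route. The paper uses the structural description of $\langle sw_\Gamma,-_\Gamma\rangle$: any element either g-behaves like $sw_{\Gamma,U}$ or like $-_\Gamma\circ sw_{\Gamma,U}$ for some $U$. Playing the partition $(A,B,C)$ coming from $rot_{A,B}$ against the partition $(U,U^c)$ gives a short case analysis forcing two of $A,B,C$ to be empty. Your parity-of-$E_\Gamma$-pairs approach is viable in principle, but your sketch jumps too quickly: preservation of the 5-ary parity relation forces, for each $a\in A$, that the set $\{v\in B:E(v,a)\text{ or }N(a,v)\}$ is $\emptyset$ or all of $B$ (provided $|B|\ge 4$), which is a per-vertex dichotomy, not the global ``no edges $A\to B$'' you assert. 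Turning these local dichotomies into a contradiction with the extension property requires additional argument you have not supplied, and the small-part cases you flag are real obstacles. The paper's argument sidesteps all of this.
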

\begin{proof} (i) This is immediate from the definition of the $\langle \cdot \rangle$.

(ii) We need to identify for each reduct a relation that it preserves but which the other two do not preserve. For $\langle - \rangle$ the relation is $E_w$, for $\langle sw \rangle$ the relation it preserves is $P_{sw}$ and for $\langle rot \rangle$ we have $P_{rot, 1}$.

(iii) By (ii), $\langle -,sw \rangle$ is a proper reduct of $\langle - \rangle$ and $\langle sw \rangle$. It preserves $P_{sw,w}$, so it is not equal to Sym$(D)$.

(iv) Both $-$ and $sw$ preserve $N(x,y)$, so $\langle -, sw \rangle \subseteq$ Aut$(D,N)=\Gamma$.  $\Gamma$ is a proper reduct because $\langle -, sw \rangle$ preserves $P_{sw,w}$ but $\Gamma$ does not.

(v) By (ii), $\langle -,rot \rangle$ is a proper reduct of $\langle - \rangle$ and $\langle rot \rangle$. It preserves $P_{rot,w}$, so it is not equal to Sym$(D)$.

(vi) By \autoref{equalsSymD} (iii), it suffices to show that for all finite $A \subset D$ that has at least one edge, we can find $g \in \langle sw, rot \rangle$ such that $g(A)$ has less edges than in $A$.

Let $a \in A$ be a point adjacent to at least one edge. Let $A_1 = \{a' \in A: E(a,a')\}, A_2=\{a' \in A: E(a',a)\}$ and $A_3=\{a' \in A: N(a,a')\}$. First, switch about the subset $A_1$ - the result is that now all the edges adjacent to $a$ are edges going into $a$. Now apply $rot_a^2$: the edges between $a$ and $A_1 \cup A_2$ become outward edges, and the non-edges between $a$ and $A_3$ become inward edges. Now apply $sw_{A_1 \cup A_2}$: the outward edges from $a$ to $A_1 \cup A_2$ now become inward edges. Therefore, between $a$ and $A \backslash \{a\}$ we now only have inward edges. Applying $rot_a$ for the last time results in all these edges becoming non-edges. By noting that at every step, the number of edges within $A \backslash \{a\}$ remains the same, we have shown that we can reduce the number of edges in $A$ using functions in $\langle sw,rot \rangle$, which is what was required.

(vii) Let $f \in \langle - \rangle \cap \langle sw \rangle$. By \autoref{understandingsw}, $f$ behaves like $sw_A$ for some $A \subseteq D$. $A$ or $A^c$ must contain an edge. Hence, there exists an edge whose direction $f$ does not switch. In particular, $f$ does not behave like $-$. By \autoref{understanding-}, we conclude that $f$ has to be an automorphism of Aut$(D,E)$, as required.

(viii) We first establish some notation. We say $f: D \to D$ graph-behaves like $g: D \to D$ if for all $\bar{a} \in D$, $f(\bar{a})$ is isomorphic to $g(\bar{a})$ as \emph{undirected} graphs. We abbreviate `graph-behaves' by `g-behaves'. Let $A \subseteq D$. We say $f: D \to D$ g-behaves like $sw_{\Gamma,A}$ if $f$ g-behaves like $id$ on $A$ and on $A^c$ and if $f$ swaps edges and non-edges between $A$ and $A^c$. By folklore (or by duplicating the arguments in Section 2), $\langle sw_{\Gamma} \rangle = \{f \in$ Sym$(D): f$ g-behaves like $sw_{\Gamma,A}$ for some $A \subseteq D\}$, and $\langle -_{\Gamma}, sw_{\Gamma} \rangle = \{f \in$ Sym$(D): \exists g \in \langle sw_{\Gamma} \rangle$ such that $f=g$ or $f=-_{\Gamma}\circ g \}$.

Let $f \in \langle rot \rangle \cap \langle sw_{\Gamma}, -_{\Gamma} \rangle$. By \autoref{understandingrot}, there exists disjoint $A,B \subseteq D$ such that $f$ behaves like $rot_{A,B}$; let $C=(A \cup B)^c$. We split into two cases.

Case 1. $f \in \langle sw_{\Gamma} \rangle$, so $f$ g-behaves like $sw_{\Gamma,U}$ for some $U \subseteq D$; let $V=U^c$. To show that $f \in$ Aut$(D,E)$ it suffices to show that two of $A,B$ and $C$ must be empty. Suppose without loss that $A$ is non-empty, so we want to show that $B$ and $C$ are empty.

Since $f$ behaves like $id$ on $A$, $A$ must be a subset of $U$ or a subset of $V$. Without loss, suppose $A \subseteq U$. Similarly, $B$ and $C$ must each be a subset of $U$ or $V$. Furthermore, if $B$ is non-empty it cannot be a subset of $U$; this is because $f$ preserves non-edges in $U$ but $f$ does not preserve non-edges between $A$ and $B$. Similarly, if $C$ is non-empty, then $C \subseteq V$. So if both $B$ and $C$ are non-empty, then they must both be subsets of $V$, which is not possible by the same reasoning. Hence, one of $B$ or $C$ must be empty - without loss we may assume that $C$ is empty.

Now we have that $B=A^c$ is non-empty, $A \subseteq U$ and $B \subseteq V$. Hence, $A=U$ and $B=V$. By homogeneity of $D$, there must be an outward edge from $A$ to $B$. But now we get a contradiction: $f$ behaving like $rot_{A,B}$ implies that this edge is mapped to an edge, whereas $f$ g-behaving like $sw_{\Gamma,U}$ implies that $f$ maps this edge to a non-edge.  Thus, $B$ must also be non-empty, as required.

Case 2. $f=-_{\Gamma}\circ g$ for some $g \in \langle sw_{\Gamma} \rangle$. Let $U \subseteq D$ be such that $g$ g-behaves like $sw_{\Gamma,U}$. Now, for any subset $X$ of $D$ of size at least three, $f$ cannot act like the $id$ on $X$. This is because either $|X \cap U| \geq 2$ or $|X \cap U^c| \geq 2$ and we know that $f$ g-behaves like $-_{\Gamma}$ on $U$ and on $U^c$. However, we also know that $f$ behaves like the $id$ on $A,B$ and $C$, and at least one of them has size at least three. Thus, we have a contradiction.

(ix) Let $f \in \langle -,rot \rangle \cap \langle sw_{\Gamma}, -_{\Gamma} \rangle$. By \autoref{understandingjoinwith-}, there exists $g \in \langle rot \rangle$ such that $f=g$ or $f=-\circ g$. Since $f \in \langle - \rangle \Leftrightarrow - \circ f \in \langle - \rangle$, without loss we may assume that $f=g$, i.e. that $f \in \langle rot \rangle$. By (viii), it follows that $f \in$ Aut$(D,E)$, so we are done.
\end{proof}

%NEW SECTION
%NEW SECTION

\section{$\mathcal{L}$ contains all the reducts}
The task of showing that $\mathcal{L}$ contains all the reducts is split up into these lemmas:

\begin{lem}\label{threeregions} Let $G$ be a reduct of Aut$(D,E)$. Then either $G$ contains Aut$(\Gamma)$, is contained in Aut$(\Gamma)$,  or contains $\langle rot \rangle.$
\end{lem}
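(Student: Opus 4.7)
The plan is to prove the contrapositive of the third alternative: assuming $G \not\subseteq \text{Aut}(\Gamma)$, I will show $G \supseteq \text{Aut}(\Gamma)$ or $G \supseteq \langle rot \rangle$. First I would fix a witness $g \in G$ failing to preserve the $\Gamma$-edge relation $E_\Gamma \defeq E \vee E^*$. By replacing $g$ with $g^{-1} \in G$ if necessary, I can assume $g$ sends some edge to a non-edge: there exist $a,b \in D$ with $E(a,b)$ and $N(g(a), g(b))$.

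Before invoking heavy machinery I would establish a structural warm-up: $G$ must act 2-transitively on $D$. The orbits of $G$ on ordered pairs of distinct elements are unions of the three $\text{Aut}(D,E)$-orbits $E, E^*, N$. Since $N$ is symmetric, the fact that $g$ sends $(a,b) \in E$ into $N$ also implies $g$ sends $(b,a) \in E^*$ into $N$; hence $E$ and $N$ merge under $G$ if and only if $E^*$ and $N$ do. This leaves only three possible orbit structures on pairs: three singletons, $\{E,E^*\}|\{N\}$, or a single orbit. The first two force $G \subseteq \text{Aut}(\Gamma)$, contradicting our assumption, so $G$ is 2-transitive on $D$.

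Next I would apply the canonical function technology developed in Section 5.2. Starting from $g$ and its compositions with automorphisms of $(D,E)$, I would extract a canonical $h \in G$ with respect to $(D,E)$ enriched by a constant $c \in D$, so that $h$ acts uniformly on 2-types; its behavior is described by two functions $\pi_c, \pi: \{E,E^*,N\} \to \{E,E^*,N\}$ specifying what $h$ does to pairs involving $c$ and to pairs avoiding $c$ respectively. I would iterate the extraction (enriching by further constants if needed) to localize the non-trivial behavior, arranging $\pi = id$ so that $h$ only alters pairs through $c$. A case analysis on $\pi_c$ then completes the argument: if $\pi_c$ is a $3$-cycle, then $h$ behaves like $rot_c$ and \autoref{understandingrot}(iii) gives $\langle rot \rangle \leq G$; if $\pi_c$ is a transposition involving $N$, then $h$ is a `local edge flip' of the $\Gamma$-graph and \autoref{containsgamma}(iii) gives $\text{Aut}(\Gamma) \leq G$; if $\pi_c$ is the identity or $(E,E^*)$ then $h \in \text{Aut}(\Gamma)$, and the 2-transitivity of $G$ guarantees a further canonical witness outside $\text{Aut}(\Gamma)$, reducing us to one of the earlier cases.

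The hard part will be ensuring the iterated canonical extraction genuinely produces a function with $\pi = id$ away from $c$, so that the non-trivial behavior can in fact be localized to the constant. It will also take delicate checking, in the transposition case, that the alteration produced by $h$ is in a form compatible with \autoref{containsgamma}(iii) --- namely, that $h$ actually switches (rather than deletes) the direction of a single edge while leaving the rest of $D$ untouched --- which will typically require composing $h$ with a carefully chosen automorphism of $(D,E)$ to place the altered edge in the correct relative position.
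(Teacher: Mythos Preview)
Your overall architecture---pass to a canonical function and do a case analysis on its behaviour---is the right one, but several of the steps you sketch are either technically wrong or hide the entire content of the proof.

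First, the extraction step. The black-box theorem (\autoref{blackbox}) does \emph{not} produce a function canonical from $(D,E,c)$; it requires a Ramsey expansion, which here is $(D,E,<,\bar c)$ with the generic linear order. This matters: the 2-types away from the constants are indexed by $\{<,>\}\times\{E,E^*,N\}$, not just by $\{E,E^*,N\}$, so your description of the behaviour by a single map $\pi:\{E,E^*,N\}\to\{E,E^*,N\}$ is too coarse, and with constants added there are many infinite orbits, with separate behaviours on each and between each pair.

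Second, ``iterate the extraction \ldots\ arranging $\pi=id$'' is exactly the substance of the lemma, and you give no mechanism for achieving it. The paper does \emph{not} iterate; it classifies the possible canonical behaviours once and for all (Lemmas~\ref{noconstants}, \ref{oneorbit}, \ref{twoorbits} and their corollaries). The dichotomy is: either some behaviour on or between infinite orbits is already ``exotic'' enough to force $G\supseteq\mathrm{Aut}(\Gamma)$ or $G\supseteq\langle rot\rangle$ directly, or every such behaviour lies in the short list $\{id,-\}$ on orbits and $\{id,sw\}$ between orbits. In the latter case the canonical $g$ preserves non-edges globally, yet it still deletes the edge $c_1c_2$; imitating \autoref{equalsSymD}(iv) then gives $G=\mathrm{Sym}(D)$, contradicting the assumption $G\not\supseteq\mathrm{Aut}(\Gamma)$. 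There is no inductive localisation to a single constant.

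Third, your endgame case analysis is both incomplete and miscited. The induced map $\pi_c$ need not be a permutation of $\{E,E^*,N\}$ at all (it could collapse everything to $N$, for instance), so ``3-cycle / transposition / identity or $(E,E^*)$'' does not exhaust the cases. And in the transposition case you invoke \autoref{containsgamma}(iii), but that lemma requires $g$ to \emph{reverse the direction} of an edge inside a finite set while behaving like $id$ elsewhere; a transposition such as $(E,N)$ on pairs through $c$ deletes and creates edges rather than reversing any, and at the graph level it does not act like $sw_\Gamma$ at $c$ either (incoming edges $E^*(c,x)$ are untouched). Finally, the fallback ``2-transitivity guarantees a further canonical witness outside $\mathrm{Aut}(\Gamma)$, reducing to earlier cases'' is circular: nothing prevents the new canonical function from again landing in the $id$/$(E,E^*)$ bucket.
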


\begin{lem}\label{region1} Let $G$ be a reduct of Aut$(D,E)$ that contains Aut$(\Gamma)$. Then $G=\Gamma, \langle sw_{\Gamma} \rangle,$ $\langle -_{\Gamma} \rangle, \langle sw_{\Gamma}, -_{\Gamma} \rangle$ or Sym$(D)$.
\end{lem}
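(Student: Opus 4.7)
The plan is to reduce this lemma directly to Thomas's classification of the reducts of the random graph \cite{tho91}. We have already shown that $\Gamma = (D, E_\Gamma)$ is isomorphic to the random graph, so Aut$(\Gamma)$ coincides with the automorphism group of the random graph. Thomas's theorem asserts that there are exactly five closed subgroups of Sym$(D)$ lying above Aut$(\Gamma)$: Aut$(\Gamma)$ itself; the closed group obtained by adjoining a single-vertex graph-switching map; the closed group obtained by adjoining a graph-complementation map; the join of the previous two; and Sym$(D)$.

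Since $G$ is closed (as a reduct) and contains Aut$(\Gamma)$ by hypothesis, $G$ is a closed subgroup of Sym$(D)$ lying above Aut$(\Gamma)$. Thomas's theorem then immediately forces $G$ to be one of these five groups. All that is left is to identify these five groups with $\Gamma$, $\langle sw_\Gamma \rangle$, $\langle -_\Gamma \rangle$, $\langle sw_\Gamma, -_\Gamma \rangle$ and Sym$(D)$ respectively.

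For the identification, note that $sw_\Gamma$ was defined precisely to realise single-vertex graph-switching on $\Gamma$, and $-_\Gamma$ to realise graph-complementation; hence $\langle sw_\Gamma \rangle$ lies inside Thomas's switching reduct, and $\langle -_\Gamma \rangle$ lies inside Thomas's complementation reduct. Conversely, $sw_\Gamma$ does not preserve $E_\Gamma$ globally and $-_\Gamma$ swaps $E_\Gamma$ with its complement, so neither lies in Aut$(\Gamma)$; by Thomas's classification there is no closed subgroup strictly between Aut$(\Gamma)$ and the switching reduct, and likewise for the complementation reduct, so the containments are in fact equalities. The same reasoning identifies $\langle sw_\Gamma, -_\Gamma \rangle$ with the join.

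The main obstacle is minimal: once one accepts Thomas's theorem, the lemma is essentially a translation exercise. The only subtlety is verifying that $\langle sw_\Gamma \rangle$ and $\langle -_\Gamma \rangle$ are each strictly between Aut$(\Gamma)$ and Sym$(D)$, so that they can be unambiguously matched to the switching and complementation reducts rather than collapsing onto Aut$(\Gamma)$ or onto Sym$(D)$; this is straightforward from the explicit defining properties of $sw_\Gamma$ and $-_\Gamma$, together with the fact, already exploited in Section 4, that these functions preserve natural relations (analogous to $P_{sw,w}$ and $P_{sw}$) that distinguish the intermediate Thomas reducts from Sym$(D)$.
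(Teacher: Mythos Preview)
Your proposal is correct and takes essentially the same approach as the paper: the paper's proof is the single line ``This is exactly the statement of \autoref{gammaandt} (i),'' i.e., a direct citation of Thomas's classification. Your additional remarks identifying $\langle sw_\Gamma\rangle$, $\langle -_\Gamma\rangle$, and $\langle sw_\Gamma,-_\Gamma\rangle$ with Thomas's reducts are more explicit than the paper bothers to be, but the substance is identical.
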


\begin{lem}\label{region2} Let $G$ be a reduct of Aut$(D,E)$ that is contained in Aut($\Gamma)$. Then $G=$Aut$(D,E)$, $\langle sw \rangle$, $\langle - \rangle, \langle sw, - \rangle$ or Aut$(\Gamma)$.
\end{lem}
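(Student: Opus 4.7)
The plan is to apply the canonical-function machinery developed in Section 5.2, which is applicable because the ordered generic digraph $(D,E,<)$ (endowed with a generic linear order) is a Ramsey structure. Given a closed reduct $G$ with Aut$(D,E) \le G \le$ Aut$(\Gamma)$ and any $g\in G$, the Ramsey property yields, for every finite tuple $\bar{a}\subset D$ and every finite parameter set $\bar{p}$, a function in $G$ that is canonical with respect to $(D,E,<,\bar{p})$ and agrees with $g$ on $\bar{a}$ up to isomorphism. Since $G$ is closed, this reduces the classification of $G$ to enumerating the canonical behaviors that $G$ admits and computing, for each such behavior, the closed subgroup it generates with Aut$(D,E)$.

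First I would classify the parameter-free canonical functions $c\colon(D,E,<)\to(D,E)$ lying in Aut$(\Gamma)$. Such a $c$ is determined by the $(D,E)$-2-type it assigns to each $(D,E,<)$-2-type on a pair $x<y$. Preservation of $\Gamma$ forces the non-edge type to itself and maps each of the two edge 2-types $E(x,y)\wedge x<y$ and $E(y,x)\wedge x<y$ to an edge 2-type with a free binary choice of direction. Modulo Aut$(D,E)$ this leaves four behaviors: (a) identity on directions; (b) global reversal (behaves like $-$); (c) ``ascending orientation'', sending every edge to point upward in $<$; and (d) ``descending orientation'', the symmetric case.

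Next I would determine, for each of (a)--(d) and for the analogous canonical functions obtained by naming one parameter (which include $sw$ and $-\circ sw$), the closed subgroup it generates together with Aut$(D,E)$. Behaviors (a) and (b) yield Aut$(D,E)$ and $\langle-\rangle$ respectively; the parameterised behaviors $sw$ and $-\circ sw$ yield $\langle sw\rangle$ and $\langle-,sw\rangle$ by \autoref{understandingsw} and \autoref{understandingjoinwith-}; and the orientation behaviors (c) and (d), suitably combined with the remaining parameterised canonical behaviors, yield Aut$(\Gamma)$. Once the canonical behaviors admitted by $G$ have been identified, which of the five reducts $G$ equals is pinned down by the preservation relations $E_w$, $P_{sw}$, and $P_{sw,w}$ from Section 3: $G$ preserves $E_w$ iff $G\le\langle-\rangle$, $G$ preserves $P_{sw}$ iff $G\le\langle sw\rangle$, and $G$ preserves $P_{sw,w}$ iff $G\le\langle-,sw\rangle$. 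Combining this with the inclusion $G\le$ Aut$(\Gamma)$ forces $G$ into the listed lattice.

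The main obstacle is the combinatorial case analysis in the third step, particularly handling canonical functions with arbitrary finite parameter sets and verifying that no such function produces a closed subgroup outside the list of five. The key point is to show that every canonical function in Aut$(\Gamma)$ decomposes, modulo Aut$(D,E)$, as a composition of the identity behavior, global reversal, and finitely many localized switches $sw_a$; using homogeneity of $(D,E)$ to choose $<$-orderings on finite tuples, one then verifies directly that every such composition lies in one of Aut$(D,E)$, $\langle-\rangle$, $\langle sw\rangle$, $\langle-,sw\rangle$, or Aut$(\Gamma)$.
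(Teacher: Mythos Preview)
Your overall strategy is plausible, but it departs substantially from the paper's proof, and as written it has a real gap.

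\textbf{How the paper actually proceeds.} The paper does \emph{not} classify $G$ below Aut$(\Gamma)$ by canonical functions alone. Instead it introduces the map $T(G)$ sending $G$ to the closed subgroup of Sym$(T)$ consisting of those permutations of the random tournament whose finite behaviour is replicable in $G$, shows $T(G)$ is a reduct of $\mathcal T$, and then invokes Bennett's classification of the reducts of the random tournament. Lemma~5.7 gives the biconditionals $T(G)=\mathrm{Aut}(T,E_T) \Leftrightarrow G=\mathrm{Aut}(D,E)$, $T(G)=\langle sw_{\mathcal T}\rangle \Leftrightarrow G=\langle sw\rangle$, etc., disposing of four of the five possibilities. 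Canonical functions are then used only for the residual case $\langle sw,-\rangle < G \le \mathrm{Aut}(\Gamma)$, and crucially the normalisation steps there (Claims~1--4) freely compose with $-$ and $sw$, which are already known to lie in $G$.

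\textbf{Where your argument is incomplete.} Your final paragraph asserts that every canonical function in Aut$(\Gamma)$ decomposes, modulo Aut$(D,E)$, into $id$, $-$, and finitely many $sw_a$'s, and that this decomposition pins down $G$. But a decomposition of a canonical $g'\in\tmcl(G)$ as, say, $-\circ sw_A$ does not by itself give $-\in G$ or $sw\in G$; it only gives their product. To run a pure canonical-function proof you must show separately that if a canonical $g'\in\tmcl(G)$ behaves like $-$ on some infinite orbit, then $-\in G$, and that if $g'$ behaves like $id$ on two infinite orbits $X,Y$ but like $sw$ between them, then $sw\in G$ (both can be done by embedding arbitrary finite digraphs into the relevant orbits and approximating, but this argument is absent from your sketch). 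Without these extraction steps the preservation relations $E_w,P_{sw},P_{sw,w}$ give only upper bounds, and you have no lower bounds to meet them. The paper's use of $T(G)$ and Bennett's theorem is precisely what supplies those lower bounds cheaply; replacing it by canonical functions is possible but requires a further layer of case analysis that your proposal does not carry out.
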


\begin{lem} \label{region3} Let $G$ be a reduct of Aut$(D,E)$ that contains $\langle rot \rangle$. Then $G=\langle rot \rangle, \langle rot,- \rangle$ or Sym$(D)$.
\end{lem}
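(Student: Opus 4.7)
The plan divides into two regimes based on whether $G$ is contained in $\langle -, rot \rangle$. First, if $G \subseteq \langle -, rot \rangle$, then by \autoref{understandingjoinwith-}(ii), $\langle -, rot \rangle$ is the disjoint union of $\langle rot \rangle$ and its coset $- \cdot \langle rot \rangle$, so $\langle rot \rangle$ has index $2$ in $\langle -, rot \rangle$. Consequently any subgroup $H$ with $\langle rot \rangle \leq H \leq \langle -, rot \rangle$ equals one of these two, giving $G = \langle rot \rangle$ or $G = \langle -, rot \rangle$ as desired.

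Now suppose $G \not\subseteq \langle -, rot \rangle$. The aim is to show $G = \text{Sym}(D)$, and for this I would invoke \autoref{equalsSymD}(iii): it suffices to show that for every finite $A \subset D$ containing at least one edge, there exists $g \in G$ such that $g(A)$ has strictly fewer edges than $A$. Pick $f \in G \setminus \langle -, rot \rangle$. Combining the two directions of \autoref{understandingjoinwith-}(ii) (and the fact that $\langle rot \rangle$ is the stabilizer of all three relations $P_{rot, 1}, P_{rot, 2}, P_{rot, 3}$ by \autoref{understandingrot}(iv)) shows that $\langle -, rot \rangle$ is exactly the set of permutations preserving the $6$-ary relation $P_{rot, w}$; hence $f$ does not preserve $P_{rot, w}$. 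This furnishes triples $\bar{u}, \bar{v} \in D^3$ with $P_{rot, w}(\bar{u}, \bar{v})$ and $\neg P_{rot, w}(f(\bar{u}), f(\bar{v}))$. Qualitatively there are two cases: either one of $f(\bar{u}), f(\bar{v})$ falls into $P_{rot, 3}$ while its preimage lies in $P_{rot, 1} \cup P_{rot, 2}$, or both remain in $P_{rot, 1} \cup P_{rot, 2}$ but $f$ scrambles $P_{rot, 1}$ and $P_{rot, 2}$ on the witness.

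The construction of $g$ for a given finite $A$ proceeds as follows. By homogeneity of $(D, E)$ (\autoref{genericdigraph}(ii)) and repeated use of the extension property (iii) of the same lemma, I can find an automorphism $\alpha \in \mathrm{Aut}(D, E)$ that realises a copy of the $f$-witness inside a carefully chosen finite extension of $A$. I then precompose with $\alpha$ and post-compose with suitable $rot_{A', B'}$-type elements of $\langle rot \rangle \leq G$ (available by \autoref{understandingrot}(ii)) to cancel $f$'s effect on $D \setminus A$ while retaining its orbit-disrupting effect inside $A$. The resulting $g \in G$ converts the abstract orbit-mismatch produced by $f$ into a concrete reduction in edge-count, because across the three $\langle rot \rangle$-orbits one can find representatives whose triples have distinct numbers of directed edges.

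The \textbf{main obstacle} I foresee is precisely this last step: each of $P_{rot, 3}$ and $P_{rot, 1} \cup P_{rot, 2}$ contains triples of several different edge counts (for example, $P_{rot, 3}$ contains both the empty triple and certain two-edge configurations, since rotating the empty triple about a vertex introduces two edges simultaneously), so the orbit-violation produced by $f$ does not translate mechanically into an edge deletion. To overcome this, I would choose the witnessing triples $\bar{u}, \bar{v}$ minimising the edge count in the image, and, where that is insufficient, post-compose with further $rot$-rotations about vertices inside $A$ to steer the final triple into one with strictly smaller edge count than the source. The resulting case analysis should mirror the combinatorial dissection in \autoref{understandingrot}(iv), exploiting the extension property to realise multiple required triple-isomorphism types simultaneously in a localised region. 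Once such an edge-reducing $g \in G$ is produced for each finite $A$ with edges, \autoref{equalsSymD}(iii) yields $G = \text{Sym}(D)$, completing the proof.
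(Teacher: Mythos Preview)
Your first paragraph is correct and in fact cleaner than what the paper does: the index-$2$ observation via \autoref{understandingjoinwith-}(ii) immediately disposes of the case $G\subseteq\langle -,rot\rangle$, whereas the paper handles the analogous situation by a separate canonical-function argument (its case~(i)).

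The second part, however, has a genuine gap, and it is exactly the one you flag as the ``main obstacle'' but do not close. The difficulty is not merely that the $P_{rot,i}$ contain triples of varying edge-count; it is that the witness $f\in G\setminus\langle -,rot\rangle$ is an \emph{arbitrary} permutation of $D$. You know how $f$ acts on the six points $\bar u,\bar v$, and nothing else. When you write that you will ``post-compose with suitable $rot_{A',B'}$-type elements \ldots\ to cancel $f$'s effect on $D\setminus A$'', you are assuming that the behaviour of $f$ on the remaining points of (an extension of) $A$ can be undone by some element of $\langle rot\rangle$. But every element of $\langle rot\rangle$ behaves like $rot_{A',B'}$ globally (\autoref{understandingrot}(iv)), which is an extremely rigid pattern, while $f$ could act in a completely unstructured way on each pair outside $\bar u\cup\bar v$. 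There is no mechanism here for matching the two.

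This is precisely the obstacle that the canonical-function machinery is designed to remove. The paper applies \autoref{blackbox} to $f$ and the constants $\bar c$ witnessing the failure of $P_{rot,w}$, obtaining $g\in\tmcl(G)$ whose behaviour is \emph{type-determined} as a map $(D,E,<,\bar c)\to(D,E)$. Lemmas~\ref{oneorbit} and~\ref{twoorbits} (and their corollaries) then force $g$ to behave like $id$, $rot$, or $rot^{-1}$ on and between all infinite orbits; because these are exactly the behaviours realised inside $\langle rot\rangle$, one can now legitimately compose with rotations to normalise $g$ to the identity everywhere outside $\bar c$, leaving an element that is the identity off a finite set but disturbs $P_{rot,w}$ on $\bar c$. \autoref{containsgamma}(iii) or \autoref{equalsSymD}(iv) then finishes. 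Without the canonicalisation step you have no control over $f$ away from the six witnesses, and the ``cancel and localise'' step cannot be carried out.
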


The main tool that will be used to prove these lemmas will be that of canonical functions, as developed by Bodirsky and Pinsker in \cite{bp11} and \cite{bpt13}. However, before delving into the use of canonical functions, the next subsection describes the details that are obtained by other means.

\subsection{Using the classification of the reducts of the random graph and of the random tournament}
Knowing the reducts of the random graph is evidently necessary for this result, but it is also helpful to know the reducts of the random tournament. We begin by stating these two classifications.

\textbf{Notation.}
\begin{enumerate}[(i)]
\item We let $\mathcal{T}=(T,E_T)$ denote the random tournament. This can be defined as the countable homogeneous tournament which embeds all finite tournaments.
\item Let $-_{\mathcal{T}}$ denote a function which switches the direction of all edges in the random tournament.
\item Let $sw_{\mathcal{T}}$ denote a function which switches the direction of only those edges that are adjacent to a particular fixed vertex.
\end{enumerate}

\begin{thm}\label{gammaandt}
\begin{enumerate}[(i)]
\item (Thomas \cite{tho91}.) The reducts of the random graph are: $\Gamma, \langle sw_{\Gamma} \rangle,$ $\langle -_{\Gamma} \rangle, \langle sw_{\Gamma}, -_{\Gamma} \rangle$ and the full symmetric group.
\item (Bennett, \cite{ben97}.) The reducts of the random tournament are: Aut$(T,E_T),\langle sw_{\mathcal{T}} \rangle, \langle -_{\mathcal{T}} \rangle,$ $\langle sw_{\mathcal{T}}, -_{\mathcal{T}} \rangle$ and the full symmetric group Sym($T$).
\end{enumerate}
\end{thm}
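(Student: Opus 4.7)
The plan is to give a unified proof of (i) and (ii) via the method of canonical functions, which applies since both $\Gamma$ and $\mathcal{T}$ are $\aleph_0$-categorical homogeneous structures whose expansions by a generic linear order are Ramsey classes (Ne\v{s}et\v{r}il--R\"odl). First I would confirm that the listed reducts are pairwise distinct by exhibiting preserved relations: for the random graph, $\langle -_\Gamma \rangle$ preserves the weakened edge relation $E_\Gamma(x,y) \leftrightarrow E_\Gamma(u,v)$, $\langle sw_\Gamma \rangle$ preserves the $3$-ary ``edge-parity on triangles'' relation analogous to $P_{sw}$, and their join preserves the disjunction of the two; analogous relations work in the tournament case (with transitive vs.\ cyclic triangles playing the role of edge parity).

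The main content is to show that no other closed supergroup $G$ of $\mathrm{Aut}(\Gamma)$ (resp.\ $\mathrm{Aut}(\mathcal{T})$) exists. Fix $f \in G$ lying outside the largest listed proper reduct. Using the Ramsey property of the class of finite ordered graphs (respectively ordered tournaments), extract from $f$ a canonical function $g$ in the topological closure of $\mathrm{Aut} \cup \{f\}$ after expanding by a generic linear order $<$; here ``canonical'' means that $g$ acts uniformly on the finitely many $2$-types of the ordered structure.

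The remaining work is a finite case analysis of canonical behaviours. Behaviours that collapse edges and non-edges onto the same output $2$-type witness $n$-transitivity for all $n$ and force $G = \mathrm{Sym}(D)$ by the analogue of \autoref{equalsSymD}; behaviours that globally swap the edge/non-edge dichotomy yield $-_\Gamma$ (resp.\ $-_\mathcal{T}$); behaviours that act trivially on $2$-types above the diagonal of $<$ and flip the relation below yield $sw_\Gamma$ (resp.\ $sw_\mathcal{T}$) after composing with automorphisms of $(D,E_\Gamma)$ that move the cut induced by $<$ to the intended switching vertex.

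The main obstacle I expect is the completeness of the case analysis combined with the removal of the auxiliary linear order: canonical functions are only produced in the ordered expansion, whereas $G$ itself lives in $\mathrm{Sym}(D)$ and has no access to $<$, so one must verify that the $<$-dependent part of the canonical behaviour of $g$ can be absorbed into $\mathrm{Aut}(\Gamma)$ (resp.\ $\mathrm{Aut}(\mathcal{T})$) via a further Ramsey-theoretic diagonalisation. A secondary subtlety, standard in this area, is the appearance of named constants when one runs the canonisation argument at a base vertex; one has to verify that the resulting $g$ really witnesses an element of the reduct lattice rather than something that only agrees with such an element on a coset of a point-stabiliser.
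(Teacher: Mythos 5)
This theorem is not proved in the paper at all: it is imported verbatim from Thomas \cite{tho91} and Bennett \cite{ben97} and used as a black box (indeed \autoref{region1} is proved by simply invoking part (i)). So you are attempting something the paper deliberately avoids. A canonical-function reproof of both classifications is feasible in principle, but your sketch has concrete problems beyond the fact that the ``finite case analysis'' --- which is the entire content --- is never carried out.

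The most serious error is the mechanism you propose for producing the switching reducts. A canonical function from $(V,E_\Gamma,<)$ \emph{with no added constants} cannot ``act trivially on $2$-types above the diagonal of $<$ and flip the relation below'': since $E_\Gamma$ is symmetric, the $2$-type of $(a,b)$ with $a<b$ determines that of $(b,a)$, so the only possible behaviours are $id$, $-_\Gamma$, everything-to-edge, and everything-to-non-edge; the last two give $n$-homogeneity and hence $\mathrm{Sym}$, and $sw_\Gamma$ never appears. Switching behaviour only arises after adding a constant $c$, as behaviour \emph{between} the orbits of the pointed structure (the neighbourhood and non-neighbourhood of $c$), and one must then prove an analogue of \autoref{understandingsw}(iii) to see that switching about this infinite, co-infinite set generates the same closed group as switching about a single vertex --- a step that is genuinely needed and that your sketch does not supply. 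You relegate the addition of constants to a ``secondary subtlety'', when in fact it is the only way the switching reducts can be witnessed canonically. A smaller point: for the tournament, $sw_{\mathcal{T}}$ does \emph{not} preserve ``transitive vs.\ cyclic triangles'' (switching about a vertex of a $3$-cycle produces a transitive triangle); the correct invariant distinguishing $\langle sw_{\mathcal{T}}\rangle$ from $\mathrm{Sym}(T)$ is the parity relation, i.e.\ the restriction of $P_{sw}$ to tournaments. If your aim is to use the theorem as the paper does, the appropriate move is to cite it; if your aim is to reprove it, the above gaps must be filled.
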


We immediately get:

\begin{proof}[Proof of \autoref{region1}] This is exactly the statement of \autoref{gammaandt} (i).
\end{proof}

Knowing the reducts of the random tournament contributes to the proof of \autoref{region2}, via the following construction:

\begin{Def}
Let $G$ be a reduct of $(D,E)$. We let $T(G)=\{ f \in$ Sym$(T):$ for all finite tuples $\bar{a} \in T,$ there exist $g \in G$ and a tuple $\bar{b} \in D$ such that $\bar{a} \cong \bar{b}$ and $f(\bar{a}) \cong g(\bar{b})\}$.
\end{Def}

In words, $T(G)$ contains those functions whose behaviour on finite sets can be replicated by functions in $G$. The intuition is that $T(G)$ tells us what $G$ can do to tournaments. The idea behind this concept is as follows: We show that $T(G)$ must be a reduct of $\mathcal{T}$, so by \autoref{gammaandt} $T(G)$ has five different possibilities. Now if we assume that $G$ fixes non-edges, $G$ can only change the direction of edges. From this, one might suspect that $G$ is determined by how it behaves on tournaments, i.e., that $G$ is determined by $T(G)$.

\begin{lem} \label{region2a} Let $G$ be a reduct of $(D,E)$. Then $T(G)$ is a reduct of $\mathcal{T}$.
\end{lem}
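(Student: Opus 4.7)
My plan is to verify the three defining properties of a reduct of $\mathcal{T}$: that $T(G)$ is a supergroup of $\mathrm{Aut}(\mathcal{T})$ contained in $\mathrm{Sym}(T)$, and that $T(G)$ is closed in the topology on $\mathrm{Sym}(T)$.

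For containment of $\mathrm{Aut}(\mathcal{T})$, I would take $f\in\mathrm{Aut}(\mathcal{T})$ and any finite tuple $\bar a\in T$. Since every finite tournament embeds into $(D,E)$, pick $\bar b\in D$ with $\bar b\cong\bar a$; then witness $f\in T(G)$ by taking $g=\mathrm{id}\in G$, using $f(\bar a)\cong \bar a\cong\bar b=g(\bar b)$. For closedness, suppose $f$ lies in the pointwise closure of $T(G)$ and fix $\bar a\in T$; choose $f'\in T(G)$ agreeing with $f$ on $\bar a$ and transfer the witnesses for $f'$ on $\bar a$ directly to $f$, using $f(\bar a)=f'(\bar a)$.

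The main work is showing $T(G)$ is a group. Closure under composition is the key step: given $f_1,f_2\in T(G)$ and $\bar a\in T$, apply the definition to $\bar a$ and $f_1$ to produce $g_1\in G$ and $\bar b_1\in D$ with $\bar a\cong\bar b_1$ and $f_1(\bar a)\cong g_1(\bar b_1)$, then apply it to $f_1(\bar a)$ and $f_2$ to produce $g_2\in G$ and $\bar b_2\in D$ with $f_1(\bar a)\cong\bar b_2$ and $f_2f_1(\bar a)\cong g_2(\bar b_2)$. The obstacle is that $g_1(\bar b_1)$ and $\bar b_2$ need not be equal, only isomorphic; here I would use the homogeneity of $(D,E)$ (\autoref{genericdigraph}(ii)) to pick $h\in\mathrm{Aut}(D,E)$ sending $g_1(\bar b_1)$ to $\bar b_2$, and then set $g=g_2 h g_1\in G$ and $\bar b=\bar b_1$, so that $\bar a\cong\bar b$ and $g(\bar b)=g_2(\bar b_2)\cong f_2f_1(\bar a)$. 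Closure under inverse is easier: given $f\in T(G)$ and $\bar a\in T$, set $\bar c=f^{-1}(\bar a)$ and apply the definition to $\bar c$, obtaining $g'\in G$ and $\bar b'\in D$ with $\bar c\cong\bar b'$ and $\bar a=f(\bar c)\cong g'(\bar b')$; then taking $\bar b=g'(\bar b')$ and $g=(g')^{-1}\in G$ gives $\bar a\cong\bar b$ and $g(\bar b)=\bar b'\cong\bar c=f^{-1}(\bar a)$, as required.

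I expect the composition step to be the only nontrivial point, and the homogeneity-based realignment is the essential trick. The rest of the argument is a direct unravelling of the definition of $T(G)$ together with the fact that every countable tournament embeds into $(D,E)$ (\autoref{genericdigraph}(iv)), which is what lets one find the $\bar b\in D$ in the first place.
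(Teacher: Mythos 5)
Your proof is correct and follows essentially the same route as the paper, which also treats the lemma as an unravelling of definitions and demonstrates only closure under composition (leaving containment of $\mathrm{Aut}(\mathcal{T})$, inverses, and topological closedness to the reader). Your explicit use of homogeneity to realign $g_1(\bar b_1)$ with $\bar b_2$ is exactly the step the paper's composition argument glosses over, so your write-up is, if anything, more complete.
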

\begin{proof}
We need to show that $T(G)$ is a closed supergroup of Aut$\mathcal(T)$. This is an easy exercise in unravelling definitions. We demonstrate by showing that $T(G)$ is closed under composition, and leave the remaining conditions to the reader.

Let $f,f' \in T(G)$. We want to show that $f'f \in T(G)$, so let $\bar{a} \in T$ be a finite tuple. Since $f \in T(G)$ we can find $g \in G$ and $\bar{b} \in D$ such that $\bar{a} \cong \bar{b}$ and $f(\bar{a}) \cong g(\bar{b})$. Since $f' \in T(G)$, we can find $g' \in G$ such that $f' (f(\bar{a})) \cong g' (g(\bar{b}))$. Then $g'g$ and $\bar{b}$ satisfy $\bar{a} \cong \bar{b}$ and $f'f(\bar{a}) \cong g'g(\bar{b})$, as required.
\end{proof}

\begin{lem} \label{region2b} Let $G$ be a reduct of $(D,E)$ contained in Aut$(\Gamma)$. Then:
\begin{enumerate}[(i)]
\item $G=$Aut$(D,E) \Leftrightarrow T(G)=$Aut$(T,E_T)$.
\item $G=\langle sw \rangle \Leftrightarrow T(G) = \langle sw_{\mathcal{T}} \rangle$.
\item $G=\langle - \rangle \Leftrightarrow T(G) = \langle -_{\mathcal{T}} \rangle $.
\item $G=\langle sw,- \rangle \Leftrightarrow T(G) = \langle sw_{\mathcal{T}}, -_{\mathcal{T}} \rangle$.
\end{enumerate}
\end{lem}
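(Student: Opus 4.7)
For each of the four possibilities the computation of $T(G)$ is direct, using the concrete descriptions of $G$ from Section 3. The easy case is $G = \text{Aut}(D,E)$: an $f \in T(G)$ satisfies $f(\bar{a}) \cong g(\bar{b}) \cong \bar{b} \cong \bar{a}$ for every finite $\bar{a} \in T$ via some $g \in \text{Aut}(D,E)$ and $\bar{b} \in D$, so $f \in \text{Aut}(\mathcal{T})$; conversely embed $\bar{a}$ as a tournament subtuple $\bar{b}$ of $D$ and take $g = \text{id}$. For $G = \langle sw \rangle$ I would use \autoref{understandingsw}(iv): any $g \in G$ behaves like $sw_A$ for some $A \subseteq D$, which on a tournament subtuple $\bar{b}$ looks like $sw_{A \cap \bar{b}}$, so after translating with an isomorphism between $\bar{b}$ and some $\bar{a} \in T$ the induced behavior is in $\langle sw_{\mathcal{T}}\rangle$ (by the analog of \autoref{understandingsw}(iv) for $\mathcal{T}$). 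The converse uses \autoref{understandingsw}(ii) to realize any prescribed tournament-switching by a genuine $sw_{A'} \in \langle sw \rangle$ on a suitable $\bar{b}$. The cases $G = \langle - \rangle$ and $G = \langle sw, - \rangle$ are parallel, using \autoref{understanding-} and \autoref{understandingjoinwith-}.

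\textbf{Backward direction $(\Leftarrow)$.} I split the proof into $G \subseteq G_0$ and $G_0 \subseteq G$. For $G \subseteq G_0$ I use the relation from Section 3 that characterizes $G_0$ as a subset of $\text{Sym}(D)$: $E$ itself for $\text{Aut}(D,E)$, $P_{sw}$ for $\langle sw \rangle$ (the remark after \autoref{understandingsw}), $E_w$ for $\langle - \rangle$, and $P_{sw, w}$ for $\langle sw, - \rangle$. Given $g \in G \leq \text{Aut}(\Gamma)$, preservation of the relevant relation on tournament subtuples of $D$ follows from $T(G) \subseteq H$, because the tournament-analog is preserved by $H$; preservation on tuples involving non-edges is either automatic (as in the $P_{sw}$ case, where both sides are false since $g$ preserves the underlying graph) or requires a consistency claim, namely that $g$'s ``mode'' (preserve vs.\ reverse for $\langle - \rangle$; switch vs.\ $-\circ$switch for $\langle sw, - \rangle$) is the same across all tournament subtuples. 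This consistency I would establish by a chain argument: given two tournament subtuples, the extension property of $(D,E)$ produces fresh vertices that are edge-adjacent to both and to one another, forming enclosing tournaments that share a common fresh tournament subtuple, and uniformity inside each enclosing tournament (supplied by $T(G)$) transfers through the shared fresh tournament. For the other inclusion $G_0 \subseteq G$, the case $G_0 = \text{Aut}(D,E)$ is automatic. Otherwise $H \supsetneq \text{Aut}(\mathcal{T})$, so pick any $f \in H \setminus \text{Aut}(\mathcal{T})$; realizing its non-trivial behavior on some $\bar{a}$ yields $g \in G$ non-trivial on the corresponding $\bar{b}$, hence $g \notin \text{Aut}(D,E)$. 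The already-established $G \subseteq G_0$ combined with \autoref{understanding-}, \autoref{understandingsw}, and \autoref{understandingjoinwith-} identifies $g$'s type; the ``important remark'' of Section 3 then places the canonical generator ($sw_{A_g}$ or $-$) inside $G$. \autoref{understandingsw}(iii) then yields $\langle sw \rangle = \langle sw_{A_g}\rangle \subseteq G$ whenever $A_g$ is proper non-empty, and the $\langle - \rangle$ case is analogous. For $\langle sw, - \rangle$ I carry out this extraction twice: first with $f = sw_{\mathcal{T}}$ (a size-$\geq 3$ tournament rules out the $-\circ \langle sw \rangle$ branch, giving $\langle sw \rangle \subseteq G$); then with $f = -_{\mathcal{T}}$ to obtain $g' \in G \setminus \langle sw \rangle$ of the form $-\circ h$ with $h \in \langle sw \rangle \subseteq G$, so that $- = g' h^{-1} \in G$.

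\textbf{Main obstacle.} The hardest step is the consistency argument in the $G \subseteq G_0$ direction for $G_0 \in \{\langle - \rangle, \langle sw, - \rangle\}$. Unlike the $P_{sw}$ situation — where the relation is trivially preserved on non-tournament triples — the relations $E_w$ and $P_{sw, w}$ are sensitive to tuples that mix edges and non-edges, and one really must rule out that $g$ acts with different modes on different tournament subtuples. The chain construction via fresh auxiliary vertices is the key device, and is the only place in the proof where the extension property of $(D,E)$ does real work.
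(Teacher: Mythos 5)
Your overall architecture matches the paper's, and several pieces are sound: the forward direction, the extraction of generators in the $G_0 \subseteq G$ half (via the ``important remark'' and \autoref{understandingsw}(iii)), and the chain argument through auxiliary fully-adjacent vertices for cross-tournament consistency — a step the paper actually leaves implicit in cases (iii) and (iv). But there is a genuine gap at the single most load-bearing step of your backward direction: the assertion that ``preservation of the relevant relation on tournament subtuples of $D$ follows from $T(G)\subseteq H$''. The definition of $T(G)$ only goes one way: it says that the finite behaviours of elements of $T(G)$ are replicable by elements of $G$. It does \emph{not} say that the behaviour of an arbitrary $g\in G$ on an arbitrary tournament $\bar b\subset D$ is witnessed by some $f\in T(G)$ — and without that converse, $T(G)\subseteq H$ places no constraint at all on what $G$ does to tournaments in $D$ ($T(G)$ could a priori be small even when $G$ is large, since membership in $T(G)$ requires a single permutation of $T$ \emph{all} of whose finite behaviours are simultaneously replicable). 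Your chain argument inherits the same problem: the ``uniformity inside each enclosing tournament (supplied by $T(G)$)'' is exactly this unproved converse.

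The paper proves this converse as Claim 2 of its proof, and it is the real mathematical content of the lemma. One embeds $\bar b$ into a copy $T_1\subset D$ of the random tournament that is \emph{maximal}, i.e.\ every vertex outside $T_1$ has a non-edge to some vertex of $T_1$ (the existence of such a $T_1$ itself requires a construction). Since $g\in\mathrm{Aut}(\Gamma)$ maps maximal tournaments to maximal tournaments, after composing with an automorphism of $(D,E)$ one may assume $g(T_1)=T_1$, and then $\theta^{-1}g\theta\in T(G)$ for an isomorphism $\theta\colon T\to T_1$, which witnesses $g$'s behaviour on $\bar b$. Some argument of this kind (or a back-and-forth construction of a suitable $f\in\mathrm{Sym}(T)$ directly from $g$) must be supplied; as written, your proof of $G\subseteq G_0$ does not get off the ground.
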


\begin{proof}
The following claims are used in all four parts of the lemma.

textbf{Claim 1.} $T(G)=$ Aut$(T,E_T), \langle sw_{\mathcal{T}} \rangle, \langle -_{\mathcal{T}} \rangle,$ $\langle sw_{\mathcal{T}}, -_{\mathcal{T}} \rangle$ or Sym$(T)$.

\emph{Proof of Claim 1.} This follows immediately from \autoref{region2a} and \autoref{gammaandt}.

\textbf{Claim 2.} Let $g \in G$ and let $\bar{b} \in D$ be a tournament. Then there exist $f \in T(G)$ and $\bar{a} \in T$ s.t $\bar{a} \cong \bar{b}$ and $f(\bar{a}) \cong g(\bar{b})$.

\emph{Proof of Claim 2.} Let $T_1 \subset D$ satisfy: \begin{itemize}
\item $\bar{b} \in T_1$
\item $(T_1,E|_{T_1})$ is isomorphic to the random tournament.
\item $T_1$ is a maximal tournament in $D$, i.e. for all $x \in D\backslash T_1$, there exists $y \in T_1$ such that $N(x,y)$.
\end{itemize}

We sketch how one can show such a $T_1$ exists. Start with $(T,E_T)$, and let $D'=T \cup \{x_1,x_2,x_3,\ldots \}$. We want to define an edge relation on $D'$ so that it extends $E_T$, so that it satisfies the digraph extension property (so by \autoref{genericdigraph} we get the generic digraph), and so that $T$ is a maximal tournament in $D'$. The trickiest condition is ensuring the digraph extension property is satisfied: to deal with this, you enumerate all the pairwise disjoint triples $(U,V,W) \subset D'$, and then you define edge relations so that $x_i$ witnesses the extension property for the $i$th triple. Any edges which are not determined by this process are chosen to be non-edges - this ensures $T$ is a maximal tournament in $D'$.

By composing with an element of Aut$(D,E)$ if necessary, we can assume that $g(\bar{b}) \in T_1$.  Hence, and because elements of Aut$(\Gamma)$ map maximal tournaments to maximal tournaments, $g(T_1)=T_1$. 

Now, let $\theta: T \to T_1$ witness the fact that $T_1$ is isomorphic to the random tournament. Now let $f=\theta^{-1}g \theta$. It is easy to see that $f$ satisfies the requirements of the claim.

(i) ``$\Rightarrow$''. We prove the contrapositive, so suppose $T(G)$ does not equal Aut$(T,E_T)$. Then there exists $f \in T(G)$ which swapS the direction of some edge in $T$. By definition of $T(G)$, that means there is $g \in G$ which swaps the direction of some edge in $D$, which implies that $G \neq$ Aut$(D,E)$.

``$\Leftarrow$''. Suppose $G \neq$ Aut$(D,E)$. Hence, there exists $g \in G$ and an edge $b_1b_2 \in D$ such that $g$ switches the direction of that edge. Hence, by Claim 2, there exists $f \in T(G)$ which switches the direction of an edge, which implies that $T(G) \neq$ Aut$(T,E_T)$.

(ii) ``$\Rightarrow$''. By Claim 1, we have five options for $T(\langle sw \rangle)$. By (i), it cannot be Aut$(T,E_T)$. Suppose $T(G)$ contains $\langle -_T \rangle$. Then there exists $f \in T(G)$ and a triangle in $T$ such that $f$ swaps the direction of all three edges of the triangle. This implies that there is $g \in G$ which swaps the direction of all three edges of a triangle in $D$. But no such function exists in $\langle sw \rangle$, so if $T(G) \geq \langle -_T \rangle$, then $G \neq \langle sw \rangle$.  Hence, we have that $T(\langle sw \rangle) = \langle sw_T \rangle.$

``$\Leftarrow$''. Suppose $T(G)=\langle sw_T \rangle$. By Claim 2, if $G$ does not preserve $P_{sw}$, then this can be witnessed in $T(G)$ also. Since $\langle sw_T \rangle$ does preserve $P_{sw}$, we get that $G$ preserves $P_{sw}$. By \autoref{understandingsw}, we get that $G=$ Aut$(D,E)$ or $\langle sw \rangle$. But it cannot be the former option, so $G=\langle sw \rangle$.

(iii) Same arguments as for part (ii).

(iv) ``$\Rightarrow$''. This is proved similarly to previous cases.

``$\Leftarrow$''. Suppose $T(G)=\langle sw_T,-_T \rangle$. By Claim 2, we get that $G$ preserves $P_{sw,w}$, which implies that $G \leq \langle sw,- \rangle$. In $\langle sw_T,-_T \rangle$, there is a function that does not preserve $sw$. Hence, there is a function $g \in G$ which does not preserve $sw$. Hence, by \autoref{understandingjoinwith-}, $g=- \circ g'$ where $g' \in sw$. Then $g^2$ will be in $\langle sw \rangle \backslash$Aut$(D,E)$. Hence, by \autoref{understandingsw}, $G \geq \langle sw \rangle$. By composing $g$ with an appropriate element of $\langle sw \rangle$, we get that $- \in G$. Hence, we have that $G \geq \langle sw,- \rangle$.  Thus, $G=\langle sw,- \rangle$, as required.
\end{proof}

This lemma almost completes the proof of \autoref{region1}. What is left to prove is that if $\langle -,sw \rangle < G \leq$ Aut$(\Gamma)$, then $G=$ Aut$(\Gamma)$. We believe that this can be proved directly (without the need of canonical functions), but the combinatorics involved were just out of our reach.

%START OF CANONICAL FUNCTIONS
%START OF CANONICAL FUNCTIONS

\subsection{Canonical functions}
\begin{Def} Let $\mathcal{M}, \mathcal{N}$ be any structures. Let $f: M \to N$ be any function between the domains of the structures.
\begin{enumerate}[(i)]
\item The \emph{behaviour} of $f$ is the relation $\{(p,q) \in S(\mathcal{M})\times S(\mathcal{N}): \exists \bar{a} \in M, \bar{b} \in N$ such that tp$(\bar{a})=p$, tp$(\bar{b})=q$ and $f(\bar{a})=\bar{b} \}$.
\item If the behaviour of $f$ is a function $S(M) \to S(N)$, then we say $f$ is \emph{canonical}. Rephrased, we say $f$ is canonical if for all $\bar{a}, \bar{a}' \in M$, tp$(\bar{a})=$ tp$(\bar{a}') \Rightarrow$ tp$(f(\bar{a}))=$ tp$(f(\bar{a}'))$.
\item If $f$ is canonical, we use the same symbol $f$ to denote its behaviour.
\end{enumerate}
\end{Def}

\textbf{Examples.} \begin{enumerate}
\item Any $f \in$ Aut$(D,E)$ is a canonical function, and for all types $p$, $f(p)=p$.
\item $-$ is canonical.
\item $sw_a$ is not canonical: Let $b,b'$ be vertices such that we have $E(a,b)$ and $E(b',b)$. Then, tp$(a,b)=$ tp$(b',b)$, but tp$(sw(a,b)) \neq$ tp$(sw(b',b))$. Similarly, $rot_a$ is not canonical.
\item $sw_a$ and $rot_a$ \emph{are} canonical when we regard them as functions from $(D,E,a) \to (D,E)$.
\item Let $f,g$ be canonical functions. Then $f$ behaves like $g$ (in the sense of \autoref{behaves})  if and only if $f$ and $g$ have the same behaviour (in the sense of the definition above).  Note that this is not necessarily true if the functions are not canonical.
\end{enumerate}

The benefit of canonical functions is that they are particularly well-behaved and can be easily manipulated and analysed. The next theorem will be treated as a `black-box' for this article - a proof can be found in \cite{bpt13}.  In order to state the theorem, we need to give a couple of definitions.

\begin{Def}
Let $F \subseteq D^D$. We let $\tmcl(F)$\footnote{where $\tmcl$ stands for `topological monoid closure'} denote the smallest closed monoid in $M$ containing $F$. We may abuse notation and write $\tmcl(F)$ for $\tmcl($Aut$(D,E) \cup F)$.
\end{Def}

\begin{Def}
We let $(D,E,<)$ denote the countable (linearly) ordered homogeneous digraph that embeds all finite ordered digraphs. 
\end{Def}

The theorem that follows is an application of the theorem in \cite{bpt13} to the structure $(D,E,<)$. In order for this to be valid, we need to know that $(D,E,<)$ is a Ramsey structure. The definition of a Ramsey structure can be found in \cite{bpt13}. The fact that $(D,E,<)$ is Ramsey follows from the main theorem of \cite{nr77}.

\begin{thm}\label{blackbox} Let $f \in$ Sym$(D)$ and $c_1,\ldots, c_n \in D$ be any elements. Then there exists a function $g:D \to D$ such that \begin{enumerate}[(i)]
\item $g \in \tmcl($Aut$(D,E) \cup \{f\})$.
\item $g(c_i)=f(c_i)$ for $i=1,\ldots n$.
\item When regarded as a function from $(D,E,<,c_1,...c_n)$ to $(D,E)$, $g$ is a canonical function.
\end{enumerate}
\end{thm}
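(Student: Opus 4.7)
The statement is presented as a direct application of the main canonization theorem of \cite{bpt13} to the ordered expansion $(D,E,<)$, so the plan is essentially to verify the hypotheses of that theorem and invoke it. First I would recall the theorem in the form needed here: if $\mathcal{M}$ is a homogeneous $\omega$-categorical structure in a finite relational language whose age is a Ramsey class, then for every $f \in$ Sym$(M)$ and every tuple $\bar{c} \in M$ there exists $g \in \tmcl($Aut$(\mathcal{M}) \cup \{f\})$ such that $g(\bar{c}) = f(\bar{c})$ and $g$ is canonical as a function from $(\mathcal{M},\bar{c})$ to $\mathcal{M}$.

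Next I would verify these hypotheses for $\mathcal{M} = (D,E,<)$. This structure is the Fra\"iss\'e limit of the class $\mathcal{K}$ of finite linearly ordered digraphs, so it is homogeneous and, by the same general argument used in \autoref{genericdigraph}(i), $\omega$-categorical with quantifier elimination in a finite relational language. The only non-trivial hypothesis is that $\mathcal{K}$ is a Ramsey class, which is a direct consequence of the Ne\v{s}et\v{r}il--R\"odl theorem of \cite{nr77}: the class of finite linearly ordered structures in any fixed finite relational language has the Ramsey property.

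Having verified the hypotheses, the cited theorem produces $g$ satisfying (i) and (ii) together with canonicity as a function from $(D,E,<,\bar{c})$ to $(D,E,<)$. Condition (iii) as stated, with target $(D,E)$, is then immediate: since every $(E,<)$-type refines some $E$-type, any canonical map into $(D,E,<)$ is canonical as a map into $(D,E)$. A small bookkeeping point is that the theorem of \cite{bpt13} literally places $g$ in $\tmcl($Aut$(D,E,<) \cup \{f\})$; but since Aut$(D,E,<) \subseteq$ Aut$(D,E)$, this is contained in $\tmcl($Aut$(D,E) \cup \{f\})$, so (i) holds as stated.

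The main obstacle, were one to reprove this from scratch rather than cite, would be the Ramsey-theoretic input. The outline is then the standard Bodirsky--Pinsker compactness scheme: for each finite tuple $\bar{a} \in D$ one colours the $(E,<,\bar{c})$-types of tuples of a fixed length by the $E$-type of their $f$-image, applies the Ramsey property of $\mathcal{K}$ to find, after composing with an appropriate automorphism, an isomorphic copy on which the colouring is constant, and then a K\H{o}nig-type diagonalisation stitches these finite canonizations into a single $g \in \tmcl($Aut$(D,E) \cup \{f\})$. All of this machinery is supplied by \cite{bpt13} and \cite{nr77}, so no further combinatorial work is required here.
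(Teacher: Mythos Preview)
Your proposal is correct and matches the paper's treatment exactly: the paper does not prove this theorem at all, but explicitly declares it a ``black-box'' application of the canonization theorem in \cite{bpt13}, noting only that the Ramsey hypothesis for $(D,E,<)$ follows from \cite{nr77}. If anything you are more thorough than the paper, since you spell out the bookkeeping points (canonicity into $(D,E,<)$ implies canonicity into $(D,E)$, and $\tmcl(\mathrm{Aut}(D,E,<)\cup\{f\})\subseteq\tmcl(\mathrm{Aut}(D,E)\cup\{f\})$) that the paper leaves implicit.
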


How is this theorem used? We illustrate by sketching how we will complete the proof of \autoref{region2}: $G$ is a closed group such that $\langle -,sw \rangle < G \leq$ Aut$(\Gamma)$. Thus, $G$ does not preserve $P_{sw,w}$; we let $f \in G$ and $c_1,\ldots, c_6 \in D$ witness this fact.  We now use \autoref{blackbox} to obtain the canonical $g$ as in the theorem. We then examine the possibilities for $g$'s behaviour, which boils down to some finite combinatorics. Using \autoref{containsgamma} we show that in all the possible behaviours, $G$ must contain Aut$(\Gamma)$.

Implicit in this argument is the fact that we care only about the behaviour of the canonical function. Though this is not immediate, it will certainly become clear as we work with these functions. Intuitively, the idea is that two different canonical functions $f,f'$ with the same behaviour provide the same information about $G$.

This means that when we analyse the canonical functions, it suffices to analyse the possible behaviours of canonical functions. This task in turn is greatly simplified by the following:

\textbf{Important Observation.} The behaviour of a canonical function $f: (D,E,<,c_1,\ldots, c_n) \to (D,E)$ is determined by the restriction of the behaviour to 2-types. This follows from two facts. The first is that $(D,E,<,c_1,\ldots,c_n)$ has quantifier elimination (see \cite{hod97}). The second is that the arity of the named relations is $\leq 2$. These two facts imply that the type of an $n$-tuple $(a_1,a_2,\ldots,a_n)$ is determined by the set of 2-types $\{$tp$(a_i,a_j): 1\leq i < j \leq n\}$; the observation follows easily from this.

%START OF NEW SUBSECTION

\subsubsection{Canonical functions from $(D,E,<)$}
We start our analysis with the simplest situation, which is when no constants are added. As per the discussion above, it suffices to analyse the possible behaviours restricted to 2-types. To do this, we first need to describe what the possible 2-types of $(D,E,<)$ and $(D,E)$ are. 

\textbf{Notation.} Let $\phi_1(x,y),\ldots,\phi_n(x,y)$ be formulas. We let $p_{\phi_1,\ldots,\phi_n}(x,y)$ denote the (partial) type determined by the formula $\phi_1(x,y) \wedge \ldots \wedge \phi_n(x,y)$.

For example, let $a,b \in (D,E,<)$ be such that $a<b$ and $E(a,b)$. Then $p_{<,E}(x,y)=$ tp$(a,b)$. We will often omit the free variables $x$ and $y$ and write, for example, $p_{<,E}$.

With this notation in place, it is easy to state what the 2-types of $(D,E,<)$ and $(D,E)$ are.

\begin{itemize}
\item There are three 2-types in $(D,E)$: $p_{E},p_{E^*}$ and $p_{N}$.
\item There are six 2-types in $(D,E,<)$: $p_{<,E},p_{<,E^*}, p_{<,N}$, $p_{>,E},p_{>,E^*}$ and $p_{>,N}$.
\end{itemize}

Now, what are the possible behaviours? For each 2-type in $(D,E,<)$, we must choose which 2-type in $(D,E)$ it gets mapped to. This choice is not free: the image of a type $p(x,y)$, say, determines the image of the corresponding type $p^*(x,y) \defeq p(y,x)$. This is the only restriction - it is easy to show that all functions $\{p_{<,E},p_{<,E^*}, p_{<,N}\} \to \{p_E,p_{E^*},p_N\}$ can be realised as the behaviour of some canonical function $f:(D,E,<) \to (D,E)$. (You use the universality of $(D,E)$. Also, remember that we do not require $f$ to be bijective.)

The next lemma contains the analysis of these behaviours.

\begin{lem} \label{noconstants} Let $G$ be a closed supergroup of Aut$(D,E)$ and let $f \in \tmcl(G)$ be a canonical function from $(D,E,<)$ to $(D,E)$. Then (at least) one of the following is true:
\begin{itemize}
\item $f$ behaves like $id$.
\item $f$ behaves like $-$.
\item $G$ contains Aut($\Gamma$).
\end{itemize}
\end{lem}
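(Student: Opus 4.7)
The plan is to analyze the behavior of $f$ case by case, using the Important Observation to reduce the analysis to 2-types. Since the behavior of a canonical $f:(D,E,<)\to(D,E)$ is determined by its action on the three ``positive direction'' 2-types $p_{<,E}, p_{<,E^*}, p_{<,N}$ (the other three being determined by the involution $(x,y)\mapsto (y,x)$), I will write the behavior as a triple $(\alpha,\beta,\gamma) \in \{p_E, p_{E^*}, p_N\}^3$, giving $27$ potential behaviors. Two of these behaviors directly yield the first two conclusions: $(\alpha,\beta,\gamma)=(p_E,p_{E^*},p_N)$ means $f$ preserves every $2$-type of $(D,E)$ and so behaves like $id$, while $(p_{E^*},p_E,p_N)$ reverses edge directions and preserves non-edges, so $f$ behaves like $-$.

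For the remaining $25$ behaviors I will show $G \supseteq \text{Aut}(\Gamma)$. The two main tools are \autoref{containsgamma}(ii) (edge-alignment gives Aut$(\Gamma)$-containment) and \autoref{equalsSymD}(iii) (uniform edge-reduction gives $G=\text{Sym}(D)$); I will also use that the approximating maps $g\in G$ coming from $f\in\tmcl(G)$ are bijections, so their inverses $g^{-1}$ also lie in $G$. The generic strategy is: given a finite tuple $\bar a \in D$ with any chosen enumeration, use the extension property for $(D,E,<)$ to realize an isomorphic tuple $b_1<\cdots<b_n$ whose edges are oriented (either forward or backward in the $<$-order) so that $f(\bar b)$ realizes the desired alignment; the approximating $g\in G$ then witnesses the hypothesis of \autoref{containsgamma}(ii). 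The key observation keeping the proof of that lemma applicable even when $f$ does not preserve non-edges (e.g.\ $\gamma\in\{p_E,p_{E^*}\}$) is that two approximating maps $g_1,g_2\in G$ coming from the same canonical behavior act uniformly on non-edges, so $g_1(\bar a)$ and $g_2(\bar b)$ are still isomorphic whenever $\bar a,\bar b$ share non-edge structure. When $\alpha=\beta=p_N$, no alignment is possible, but then $f$ deletes every edge, so \autoref{equalsSymD}(iii) applies immediately. When exactly one of $\alpha,\beta$ equals $p_N$ and $\gamma=p_N$, composing with automorphisms in Aut$(D,E) \subseteq G$ to align edges in the ``deletion'' direction of $<$ again gives edge-reduction.

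The main obstacle I anticipate is the small family of behaviors in which $\alpha=\beta=p_N$ but $\gamma\neq p_N$, so that $f$ essentially interchanges edges and non-edges while forward-aligning the new edges. Here direct alignment in the sense of \autoref{containsgamma}(ii) fails because every edge of $A$ is mapped to a non-edge. The plan for these cases is to exploit that both $g$ and $g^{-1}$ are available in $G$: for dense finite $A$ the approximating $g$ strictly reduces edges, while for sparse $A$ one composes $g$ with Aut$(D,E)$-rearrangements and with approximating maps on auxiliary finite sets so that the net effect strictly reduces edge-count on $A$. Verifying that this combinatorial scheme covers every sparse configuration is the delicate point, after which \autoref{equalsSymD}(iii) yields $G=\text{Sym}(D)\supseteq\text{Aut}(\Gamma)$ and completes the case analysis.
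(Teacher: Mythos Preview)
Your overall framework---reducing to the 27 behaviors on the three $<$-increasing 2-types and then invoking \autoref{containsgamma}(ii) or \autoref{equalsSymD}(iii)---is exactly the paper's. The gap is a missing device that the paper uses repeatedly: a preliminary Claim that one may replace $f$ by a function with the same digraph behavior which additionally \emph{preserves the order} $<$. Once $f$ is order-preserving it is canonical as a map $(D,E,<)\to(D,E,<)$, so $f^2\in\tmcl(G)$ is again canonical and its behavior triple can be computed. This squaring trick is precisely what dispatches your ``main obstacle'': for $(\alpha,\beta,\gamma)=(p_N,p_N,p_E)$ one finds that $f^2$ has behavior $(p_E,p_E,p_N)$, reducing to the easy alignment case, and the same trick handles $(p_N,p_E,p_E)$ and $(p_E,p_N,p_E)$. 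Your proposed dense/sparse dichotomy with $g^{-1}$ and auxiliary points is, as you concede, not verified, and I do not see how to complete it without essentially rediscovering the squaring reduction.

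Separately, your ``generic strategy'' covers fewer of the remaining behaviors than you suggest. It works when $\alpha=\beta\in\{p_E,p_{E^*}\}$ (all edges go to a single direction regardless of their original orientation), but fails for e.g.\ $(\alpha,\beta,\gamma)=(p_E,p_{E^*},p_E)$, where edges keep their direction and no choice of $<$-order on an isomorphic copy aligns them with the enumeration. For that case the paper uses an inverse-image trick: find $\bar b,\bar b'$ with $f(\bar b)\cong f(\bar b')$ but $\bar b'$ having one fewer edge than $\bar b$, then travel $\bar a\to\bar b\to f(\bar b)\to f(\bar b')\to\bar b'$ inside $G$ to invoke \autoref{equalsSymD}(iii). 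The cases $(p_N,p_{E^*},p_E)$ and $(p_{E^*},p_N,p_E)$ likewise require a specific argument (placing two chosen vertices as the two $<$-least elements and then swapping their $<$-positions so that exactly one pairwise 2-type changes). So you have the right tools, but are missing both the order-preserving/squaring reduction and the finer case-by-case arguments for the behaviors where direct alignment is impossible.
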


\begin{proof} We split up the task according to the behaviour of $f$. For some of the cases, we use the following claim:

\textbf{Claim.~} When we consider $f$ as a function $(D,E,<) \to (D,E,<)$, we may assume that $f$ preserves the linear order.

\emph{Proof of Claim}. Let $f':D \to D$ be a function with the same behaviour as $f$ and which in addition preserves the linear order; we need to show that $f' \in \tmcl(G)$. Let $\bar{a} \in D$. By definition of $f'$, $f(\bar{a}) \cong f'(\bar{a})$ as unordered digraphs. By homogeneity of $(D,E)$, we can find $h \in$ Aut$(D,E)$ such that $h(f(\bar{a}))= f'(\bar{a})$. Since $f \in \tmcl(G)$, there is $g \in G$ such that $g(\bar{a})=f(\bar{a})$. So we have $hg \in G$ and $hg(\bar{a})=f'(\bar{a})$, as required.

\underline{Case 1}. $f(p_{<,N})=p_N$.

Case 1a. $f(p_{<,E})=p_{E}$ and $f(p_{<,E^*})=p_{E^*}$, in which case $f$ behaves like $id$.

Case 1b. $f(p_{<,E})=p_{E^*}$ and $f(p_{<,E^*})=p_{E}$, in which case $f$ behaves like $-$.

Case 1c. $f(p_{<,E})=p_{E}$ and $f(p_{<,E^*})=p_{E}$. We will use \autoref{containsgamma} (ii) to show that $G$ contains Aut$(\Gamma)$, so let $\bar{a} \in D$. Then there is $\bar{b} \in D$ such that $\bar{a} \cong \bar{b}$ \emph{as digraphs}, and $b_i < b_j$ for all $i<j$. Observe that for all edges $b_ib_j$ in $\bar{b}$, we have that $E(f(b_i),f(b_j)) \leftrightarrow i<j$. By homogeneity of $(D,E)$, there is $g_1 \in G$ such that $g_1(\bar{a})=\bar{b}$. Since $f \in \tmcl(G)$, there is $g_2 \in G$ such that $g_2(\bar{b})=f(\bar{b})$. But now $g=g_2g_1$ satisfies the assumptions of \autoref{containsgamma}, so we conclude that $G$ contains Aut$(\Gamma)$.

The case where $f(p_{<,E})=p_{E^*}$ and $f(p_{<,E^*})=p_{E^*}$ is symmetric to this case.

Case 1d. $f(p_{<,E})=p_{N}$ or $f(p_{<,E^*})=p_{N}$. Without loss suppose the first is true, the latter case is symmetric. We will use \autoref{equalsSymD} (iii) and show that $G=$ Sym$(D)$, so in particular $G$ contains Aut$(\Gamma)$. Let $\bar{a} \in D$ contain an edge $a_ia_j$. By homogeneity of $(D,E)$ there is $g_1 \in G$ such that $g_1(\bar{a}) \cong \bar{a}$ and $g_1(a_i)<g_1(a_j)$. Now let $g_2 \in G$ equal $f$ on $g_1(\bar{a})$. Observe that $g_2$ deletes the edge $g_1(a_i)g_1(a_j)$ (and possibly others too) and we also know that $g_2$ preserves edges. Hence, $g_2g_1(\bar{a})$ contains less edges than $\bar{a}$, so by \autoref{equalsSymD}, we are done.

\underline{Case 2} $f(p_{<,N})=p_E$.

Case 2a. Neither $f(p_{<,E})=p_{N}$ nor $f(p_{<,E^*})=p_{N}$. We will use \autoref{equalsSymD} (iii) and show that $G=$ Sym$(D)$. Let $\bar{a} \in D$ contain an edge $a_ia_j$. By homogeneity of $(D,E)$, map $\bar{a}$ to an isomorphic (as digraphs) tuple $\bar{b}$ where $b_i$, resp. $b_j$, is the least, resp. second least, element of $\bar{b}$. By assumption, $f$ maps $\{b_i,b_j\}$ to an edge but we do not know its direction. This splits into two cases.

Subcase (i). Suppose we have $E(f(b_i),f(b_j))$. Now let $\bar{b}'$ be an ordered digraph which is the same as $\bar{b}$ except that $b_i,b_j$ is changed to a non-edge. Observe that $f(\bar{b}') \cong f(\bar{b})$, because we are in the case where $f(p_{<,N})=p_E$.

But now we are done: we can find mappings in $G$ to get from $\bar{a}$ to $\bar{b}$ to $f(\bar{b})$ to $f(\bar{b}')$ to $\bar{b}'$ (noting that though $f$ may not be invertible, the function in $G$ which agrees with $f$ on $\bar{b}'$ is invertible), and $\bar{b}'$ has less edges than in $\bar{a}$.

Subcase (ii). Suppose we have $E^*(f(b_i,b_j))$. The previous argument does not work as stated, because the edges $f(b_i,b_j)$ and $f(b_i',b_j')$ will not be in the same direction. To fix this, we modify $\bar{b'}$ by swapping $b_i'$ and $b_j'$ with respect to the linear order. Now,  $f(b_i,b_j)$ and $f(b_i',b_j')$ will be in the same direction. Furthermore, because we earlier specified that $b_i$ and $b_j$ should be the two least elements of $\bar{b}$, this swapping only affects the type of the pair $b_ib_j$ - all other pairs' types are unaffected. This ensures that we have $f(\bar{b}') \cong f(\bar{b})$. The rest of the proof continues as in the previous case.

Case 2b. $f(p_{<,E})=p_{N}$ and $f(p_{<,E^*})=p_{N}$. By using the claim, it is easy to see that $f^2$ is a canonical function in $\tmcl(G)$ where $f^2(p_{<,N})=p_N$, $f(p_{<,E})=p_{E}$ and $f(p_{<,E^*})=p_{E}$. Hence, by Case 1c, $G$ contains Aut$(\Gamma)$.

Case 2c. $f(p_{<,E})=p_{N}$ and $f(p_{<,E^*})=p_{E}$. By considering $f^2$, this case is reduced to Case 1d, so $G$ contains Aut$(\Gamma)$.

Case 2d. $f(p_{<,E})=p_{N}$ and $f(p_{<,E^*})=p_{E^*}$. We will use \autoref{equalsSymD} (iii). Let $\bar{a} \in D$ contain an edge $E(a_i,a_j)$. By composing with an element of Aut$(D,E)$ if necessary, we may assume that $a_j$ is the least element, and $a_i$ is the second least element. In particular, we have tp$(a_j,a_i)=p_{<,E^*}$. Let $\bar{b}$ be an ordered digraph such that $f(\bar{b})=\bar{a}$. Now let $\bar{b}'$ be an ordered digraph which is the same as $\bar{b}$ except we swap the position of $b_i'$ and $b_j'$ in the linear order. Now, tp($b_j',b_i')=p_{<,E}$. By choosing $b_i,b_j$ to be the least elements, the types of all the other pairs are unaffected, so $f(\bar{b}')$ is the same digraph as $\bar{a}$ but the edge $a_ia_j$ is replaced by a non-edge. Hence, we are done.

Case 2e. $f(p_{<,E})=p_{E}$ and $f(p_{<,E^*})=p_{N}$. Considering $f^2$ reduces us to Case 2a.

Case 2f. $f(p_{<,E})=p_{E^*}$ and $f(p_{<,E^*})=p_{N}$. Imitate the argument in Case 2d to show that $G=$ Sym$(D)$.

\underline{Case 3} $f(p_{<,N})=p_{E^*}$. This is symmetric to Case 2.
\end{proof}

%START OF NEW SUBSECTION

\subsubsection{Canonical functions from $(D,E,<,\bar{c})$}
We now move on to the general situation where we have added constants $\bar{c} \in D$ to the structure. For convenience, we may as well assume that $c_i<c_j$ for all $i<j$. As is the case for $(D,E)$ (see \autoref{genericdigraph}), the $n$-types of $(D,E,<,\bar{c})$ correspond to the orbits of Aut$(D,E,<,\bar{c})$ acting on the set of $n$-tuples of $D$. As a result we use the concepts of types and orbits interchangeably, and we often abuse notation to provide for a smoother presentation.

Unlike the situation with no constants, this structure is not 1-transitive, i.e., we have more than one orbit. There are two kinds of orbits. The first is a singleton containing one of the constants, e.g. $\{c_1\}$ is an orbit. The second kind consists of infinite orbits, which are necessarily isomorphic to the generic digraph. An infinite orbit is determined by how its elements are related to the $c_i$, e.g., one of the orbits will be $\{x \in D: x<c_1 \wedge \bigwedge_i E(x,c_i)\}$.

In order to describe the 2-types, we extend notation from the previous section.

\textbf{Notation} Let $A,B$ be definable subsets of $D$ and let $\phi_1(x,y),\ldots,\phi_n(x,y)$ be formulas. We let $p_{A,B,\phi_1,\ldots,\phi_n}(x,y)$ denote the (partial) type determined by the formula $x \in A \wedge y \in B \wedge \phi_1(x,y) \wedge \ldots \wedge \phi_n(x,y)$.

Now let $X$ and $Y$ be orbits, $\phi \in \{<,>\}$ and $\psi \in \{E,E^*,N\}$. Then all the 2-types are of the form $p_{X,Y,\phi, \psi}= \{(a,b) \in D: a \in X, b \in Y, \phi(a,b)$ and $\psi(a,b)\}$.\footnote{This is an example of how we are abusing notation and blurring the distinction between types and orbits.}

Our task now is to analyse the possibilities for $f(p_{X,Y,\phi,\psi})$, where $f$ is a canonical function. The analysis is split into cases depending on how the orbits $X$ and $Y$ relate. The first lemma deals with the situation when $X=Y$.

\begin{lem} \label{oneorbit} Let $G$ be a closed supergroup of Aut$(D,E)$, let $f \in \tmcl(G)$ be a canonical function from $(D,E,<,\bar{c})$ and let $X$ be an infinite orbit of Aut$(D,E,\bar{c})$. Then (at least) one of the following holds:
\begin{itemize}
\item $f$ behaves like $id$ on $X$.
\item $f$ behaves like $-$ on $X$.
\item $G$ contains Aut$(\Gamma)$.
\end{itemize}
\end{lem}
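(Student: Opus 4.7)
The strategy is to adapt the proof of \autoref{noconstants} to the orbit $X$. The crucial observation is that, since $X$ is an infinite orbit of $\mathrm{Aut}(D,E,\bar{c})$, the induced substructure $(X, E|_X)$ is itself (isomorphic to) the generic digraph, and moreover $(X, E|_X, <|_X) \cong (D,E,<)$. In particular, every $(D,E,<)$-type is realized by a tuple in $X$.

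The $2$-types of pairs $(a,b) \in X^2$ with respect to $(D,E,<,\bar{c})$ are exactly $p_{X,X,<,E}$, $p_{X,X,<,E^*}$, $p_{X,X,<,N}$ and their reverses; these play the role that $p_{<,E}$, $p_{<,E^*}$, $p_{<,N}$ played in \autoref{noconstants}. Hence the behavior of $f|_X$ is determined by the triple $(f(p_{X,X,<,E}), f(p_{X,X,<,E^*}), f(p_{X,X,<,N}))$, and I would proceed by the same case analysis as before. The ``diagonal'' sub-cases (where $N \mapsto N$ and the two edge-types are mapped identically, resp.\ swapped) give that $f$ behaves like $\mathrm{id}$, resp.\ like $-$, on $X$. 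Each of the remaining sub-cases, by the same argument as the corresponding sub-case of \autoref{noconstants}, produces an element of $G$ that either orients all edges of an arbitrary finite set consistently with the linear order (yielding $G \supseteq \mathrm{Aut}(\Gamma)$ via \autoref{containsgamma} (ii)) or strictly decreases the number of edges in an arbitrary finite subset of $D$ (yielding $G = \mathrm{Sym}(D)$ via \autoref{equalsSymD} (iii)), in either case finishing the proof.

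The only bookkeeping required is the following. Each argument in \autoref{noconstants} starts with an arbitrary finite tuple $\bar{a} \in D$ and constructs an auxiliary tuple $\bar{b}$ with a specified $(D,E,<)$-type. Here I would additionally require $\bar{b} \in X$, which is possible precisely because $(X, E|_X, <|_X) \cong (D,E,<)$. The element of $G$ mapping $\bar{a}$ to $\bar{b}$ is then provided by homogeneity of $(D,E)$ (so by $\mathrm{Aut}(D,E) \subseteq G$), and the element of $G$ agreeing with $f$ on $\bar{b}$ is supplied by $f \in \tmcl(G)$. The ``we may assume $f$ preserves $<$'' convention used in the proof of \autoref{noconstants} adapts verbatim to ``we may assume $f$ preserves $<$ on $X$'': any function with the same behavior on $X^2$ that preserves $<$ on $X$ still lies in $\tmcl(G)$, since on each finite tuple of $X$ it differs from $f$ only by a post-composed automorphism of $(D,E)$.

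I do not anticipate any essentially new difficulty beyond this translation. The main (mild) obstacle is the clerical one of verifying, in each sub-case, that the auxiliary tuples needed can be chosen inside $X$ so that the hypothesis on $f(p_{X,X,\cdot,\cdot})$ actually applies; this is always possible because every finite ordered digraph embeds in $(X, E|_X, <|_X)$.
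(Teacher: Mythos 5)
Your proof is correct and is exactly the route the paper takes: the paper's entire proof of this lemma is the one-line observation that $(X,E|_X)$ is isomorphic to the generic digraph, so that after unravelling definitions the statement has the same mathematical content as \autoref{noconstants}. You have simply spelled out the bookkeeping (realizing the auxiliary tuples inside $X$, identifying the relevant $2$-types, and adapting the ``preserves $<$'' reduction) that the paper leaves implicit.
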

\begin{proof} By noting that $(X,E|_X)$ is isomorphic to $(D,E)$, unravelling the definitions will show that this lemma has exactly the same mathematical content as \autoref{noconstants}.
\end{proof}

Next, we look at how $f$ can behave between two infinite orbits. To do this analysis, we need to look at how two infinite orbits can relate to each other with respect to the linear order:

\textbf{Facts and Notation} There are two ways that two infinite orbits $X$ and $Y$ of Aut($D,E,<,\bar{c})$ can relate to each other with respect to the linear order $<$:
\begin{itemize}
\item All of the elements of one orbit, $X$ say, are smaller than all of the elements of $Y$. This is abbreviated by $X<Y$
\item $X$ and $Y$ are interdense: $\forall x<x' \in X, \exists y \in Y$ such that $x<y<x'$ and vice versa.
\end{itemize}

We deal with these two possibilities separately, starting with the case where one orbit is below the other.

\begin{lem} \label{twoorbits} Let $G$ be a closed supergroup of Aut$(D,E)$, let $f \in \tmcl(G)$ be a canonical function from $(D,E,<,\bar{c})$ and let $X$ and $Y$ be infinite orbits of Aut$(D,E,\bar{c})$ such that $X<Y$. Then (at least) one of the following holds: \begin{itemize}
\item $f$ behaves like $id,sw,rot$ or $rot^{-1}$ between $X$ and $Y$.
\item $f$ behaves like $- \circ rot_{(X,Y)}$ on $X \cup Y$.
\item $G$ contains Aut($\Gamma$).
\end{itemize}
\end{lem}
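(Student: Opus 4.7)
The proof is a case analysis on the induced map $f_*$ from the three between-types $\{p_{X,Y,<,E},\, p_{X,Y,<,E^*},\, p_{X,Y,<,N}\}$ into the three 2-types $\{p_E, p_{E^*}, p_N\}$ of $(D,E)$; since $X<Y$, these are the only 2-types realized on pairs with one coordinate in $X$ and the other in $Y$, so there are $27$ such maps. I organize by whether $f_*$ is a bijection. When $f_*$ is not injective, I adapt the arguments from Cases 1c, 1d and 2a--2f in the proof of \autoref{noconstants}. The key point is that the homogeneity of $(D,E)$, together with the fact that each of $(X,E|_X)$ and $(Y,E|_Y)$ is isomorphic to the generic digraph, lets me place any finite tuple with an edge between $X$ and $Y$ into any desired between-type. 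In each non-injective subcase either $f$ directly reduces the number of edges on a finite tuple (so \autoref{equalsSymD}(iii) or (iv) gives $G=$Sym$(D)$), or $f^2$ has a strictly cleaner bijective between-behavior that falls under the bijective analysis below. Either way, $G\supseteq$ Aut$(\Gamma)$.

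\textbf{Bijective subcase.} When $f_*$ is a permutation of $\{E,E^*,N\}$, the six possibilities are the identity, three transpositions, and two $3$-cycles. The identity is the between-behavior of $id$, the transposition $E\leftrightarrow E^*$ fixing $N$ is that of $sw$, and the two $3$-cycles are those of $rot$ and $rot^{-1}$: these four give the first bullet of the lemma directly. The transposition $E^*\leftrightarrow N$ fixing $E$ is exactly the between-behavior of $-\circ rot_{(X,Y)}$; for this subcase I additionally apply \autoref{oneorbit} to each of $X$ and $Y$ to pin down the on-orbit behavior of $f$, and either $f$ behaves like $-$ on both orbits --- giving precisely the second bullet --- or \autoref{oneorbit} already yields Aut$(\Gamma)\subseteq G$. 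The remaining transposition $E\leftrightarrow N$ fixing $E^*$ does not match any listed behavior; here I argue that by choosing a suitable finite configuration inside $X\cup Y$ and composing $f$ with an automorphism of $(D,E)$ obtained via homogeneity, one can exhibit a function in $G$ whose only net effect on the tuple is to switch the direction of a single edge between $X$ and $Y$, fitting the hypothesis of \autoref{containsgamma}(iii) and forcing Aut$(\Gamma)\subseteq G$.

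\textbf{Main obstacle.} The hardest subcase is the final transposition $E\leftrightarrow N$: because this permutation simultaneously deletes edges in one direction and creates edges from non-edges (while fixing the reverse direction), $f$ is not itself a switching or rotation operator, and isolating its net effect down to a single edge-direction switch requires a careful finite combinatorial construction. The non-injective cases, though numerous, should be routine adaptations of the corresponding ones in \autoref{noconstants}, with the roles played there by $p_{<,E},\,p_{<,E^*},\,p_{<,N}$ now played by the three between-types.
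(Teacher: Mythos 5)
Your overall strategy --- a case analysis over the $27$ possible between-behaviours, recycling the arguments of \autoref{noconstants} for the non-injective ones --- is the same as the paper's, but the bijective subcase contains two genuine errors, both stemming from not tracking how the between-orbit behaviour interacts with the on-orbit behaviour. The more serious one is your handling of the transposition $E\leftrightarrow N$ fixing $E^*$. When $f$ additionally behaves like $-$ on both $X$ and $Y$, this is exactly the behaviour of $-$ composed with a rotation between the two orbits (compute: $-\circ rot_{(Y,X)}$ sends outward edges to non-edges, non-edges to outward edges, and fixes inward edges, while acting like $-$ inside each orbit); it is the paper's Case 2d, subcase (ii), and is the exceptional second bullet of the lemma, not a route to Aut$(\Gamma)$. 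Such a behaviour is realisable in $\tmcl(\langle -,rot\rangle)$, and $\langle -,rot\rangle$ does not contain Aut$(\Gamma)$ since it preserves $P_{rot,w}$; consequently no argument can extract from such an $f$ a function in $G$ that switches a single edge and fixes everything else, so the construction you propose for this transposition cannot exist. (When instead $f$ behaves like $id$ on $Y$, the correct conclusion is the stronger $G=$ Sym$(D)$, obtained by an edge-\emph{deletion} construction: apply $f$ twice, moving the two endpoints of an edge into $X$ in the two possible orders; the two outcomes agree except that one has an edge where the other has a non-edge.)

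The second problem is that for the transposition $E^*\leftrightarrow N$ fixing $E$, your dichotomy ``either $f$ is $-$ on both orbits, or \autoref{oneorbit} already gives Aut$(\Gamma)$'' is false: \autoref{oneorbit} also allows $f$ to behave like $id$ on one or both orbits, and in those cases $f$ is neither $-\circ rot_{(X,Y)}$ on $X\cup Y$ nor any of $id,sw,rot,rot^{-1}$ between the orbits, so a further argument is required --- again the double-application edge-deletion trick, yielding $G=$ Sym$(D)$. A smaller gap: your blanket description of the non-injective cases (``either $f$ reduces edges or $f^2$ is cleaner'') misses the behaviour $N\mapsto N$, $E\mapsto E$, $E^*\mapsto E$, where no edges are deleted and $f^2$ has the same non-injective behaviour; there one must instead align all edges of a finite tuple in a single direction and invoke \autoref{containsgamma}(ii), as in Case 1c of \autoref{noconstants}.
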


\begin{proof} Let $x_0 \in X$ be fixed. We emphasise now an important feature of this proof, which is that our arguments only depend on how $f$ behaves on $\{x_0\} \cup Y$. This is done intentionally so that these arguments can be used unaltered in later lemmas.

By \autoref{oneorbit}, we may assume that $f$ behaves like $id$ or $-$ on $X$ and $Y$. As mentioned above, the arguments only concern one point $x_0 \in X$, so it does not matter how $f$ behaves on $X$. However, whether $f$ behaves like $id$ or $-$ on $Y$ can make a difference. Fortunately, for most cases the arguments require very little, if any, adjustment, so we assume $f$ acts like $id$ on $Y$. When required, we will explain how to modify the argument if $f$ acts like $-$ on $Y$.

Furthermore, as the arguments are similar to that of \autoref{oneorbit}, the proofs are more sketchy, and we leave the details to the reader.

\underline{Case 1} $f(p_{X,Y,N})=p_{N}$.\footnote{Note that because $X<Y$, $p_{X,Y,\psi}=p_{X,Y,<,\psi}$ for any formula $\psi$}

Case 1a. $f(p_{X,Y,E})=p_{E}$ and $f(p_{X,Y,E^*})=p_{E^*}$. Then $f$ behaves like $id$ between $X$ and $Y$.

Case 1b. $f(p_{X,Y,E})=p_{E^*}$ and $f(p_{X,Y,E^*})=p_{E}$. Then $f$ behaves like $sw$ between $X$ and $Y$.

Case 1c. $f(p_{X,Y,E})=p_{E}$ and $f(p_{X,Y,E^*})=p_{E}$. We will use \autoref{containsgamma} (ii) to show that $G$ contains $\Gamma$. Let $\bar{a}=(a_1,\ldots,a_n) \in D$. We want to show that by using elements of $G$, we can switch the direction of the edges of $\bar{a}$ so they are all pointing in the same direction. We do this by induction on $n$. The base case $n=1$ is trivial so let $n>1$. By the inductive hypothesis, we can assume that for $2\leq i,j \leq n$, if $a_ia_j$ is an edge, then $E(i,j) \leftrightarrow i<j$. By homogeneity, map $a_1$ to $x_0$ and the other $a_i$'s into $Y$. Then applying $f$ switches the edges adjacent to $a_1$ so they are all directed out of $a_1$ and furthermore $f$ does not alter any of the other edges. Thus, the resulting digraph has all edges going in the same direction, as required.

In the case where $f$ behaves like $-$ on $Y$, after applying the induction hypothesis, you first map all of $\bar{a}$ into $Y$, apply $f$ (so switch \emph{all} the edges' directions), map $a_1$ to $x_0$, and apply $f$ again (so we `unswitch' all the edges in $\{a_2,\ldots,a_n\}$).

Case 1d. $f(p_{X,Y,E})=p_{N}$ or $f(p_{X,Y,E^*})=p_{N}$. Given any $\bar{a} \in D$ which contains edges, we use $f$ to delete edges from it, so by \autoref{equalsSymD} (iii), $G=$ Sym$(D)$. See Case 1d of \autoref{oneorbit} for more detail.

\underline{Case 2} $f(p_{X,Y,N})=p_{E}$.

Case 2a. Neither $f(p_{X,Y,E})=p_{N}$ nor $f(p_{X,Y,E^*})=p_{N}$. $G=$ Sym$(D)$, by using the same argument as in Case 2a of \autoref{oneorbit}. Note that the argument would be unaffected if $f$ behaves like $-$ on $X$.

Case 2b. $f(p_{X,Y,E})=p_{N}$ and $f(p_{X,Y,E^*})=p_{N}$. Given any $\bar{a} \in D$ containing edges, we apply $f$ twice in order to delete edges. Thus, $G=$ Sym$(D)$ by part (iii) of \autoref{equalsSymD}.

Case 2c. $f(p_{X,Y,E})=p_{N}$ and $f(p_{X,Y,E^*})=p_{E}$. Same as Case 2b.

Case 2d. $f(p_{X,Y,E})=p_{N}$ and $f(p_{X,Y,E^*})=p_{E^*}$. 

Subcase (i) $f$ acts like $id$ on $Y$. We show that $G=$ Sym$(D)$. The idea is the same to that of Case 2d in \autoref{oneorbit}. Let $\bar{b} \in D$ contain an edge $E(b_i,b_j)$. Let $\bar{b}_1$ be obtained by mapping $b_i$ to $x_0$, applying $f$, then mapping $b_j$ to $x_0$ and applying $f$ again. Note that we have $E^*(b_{1,i},b_{1,j})$. Let $\bar{b}_2$ be obtained from $\bar{b}$ in the same way, except we map $b_j$ to $x_0$ first, and then $b_i$ second.  In this case, we have $N(b_{2,i},b_{2,j})$.  Furthermore, $\bar{b}_1$ and $\bar{b}_2$ are otherwise the same. Now suppose $\bar{a}$ is given. Find a $\bar{b}$ such that its corresponding $\bar{b}_1$ is isomorphic to $\bar{a}$. Then, we can get from $\bar{a}$ to $\bar{b_1}$ to $\bar{b_2}$, i.e., we can delete an edge from $\bar{a}$. Thus, $G=$ Sym$(D)$.

Subcase (ii) $f$ acts like $-$ on $Y$. Now, if $f$ behaves like $id$ on $X$, the subcase above shows that $G=$ Sym$(D)$. Hence, we are left with case where $f$ behaves like $id$ on both $X$ and $Y$. But then $f$ behaves like $- \circ rot_{(X,Y)}$ on $X \cup Y$.

Case 2e. $f(p_{X,Y,E})=p_{E}$ and $f(p_{X,Y,E^*})=p_{N}$. Considering $f^2$ reduces us to Case 2a.

Case 2f. $f(p_{X,Y,E})=p_{E^*}$ and $f(p_{X,Y,E^*})=p_{E}$. Then $f$ behaves like $rot$ between $X$ and $Y$. Note that if $f$ behaves like $-$ on $Y$ in this case, you can show that $G=$ Sym$(D)$ by considering $- \circ f \in \tmcl(G)$.
\end{proof}

As mentioned at the start of the proof, what was relevant is how $f$ behaved on $\{x_0\} \cup Y$.  More specifically, what was sufficient to make these arguments work was the following: For all finite digraphs $\bar{a} \in D$ and all points $a \in \bar{a}$, we can find a copy of $\bar{a}$ in $\{x_0\} \cup Y$ such that $\bar{a} \cap \{x_0\} = \{a\}$. 

This condition is satisfied in the remaining situations that need to be analysed, so their corresponding results are immediate corollaries of \autoref{twoorbits}. The statement for interdense orbits has to be modified and will perhaps appear confusing. Clarification will be provided after the statement.

\begin{cor}
Let $G$ be a closed supergroup of Aut$(D,E)$, let $f \in \tmcl(G)$ be a canonical function from $(D,E,<,\bar{c})$ and let $X$ and $Y$ be interdense infinite orbits of Aut$(D,E,\bar{c})$. Further suppose that $G$ does not contain Aut$(\Gamma)$. Then both of the following hold: \begin{itemize}
\item $f$ behaves like $id,sw,rot$ or $rot^{-1}$ between increasing tuples from $X$ to $Y$, or, $f$ behaves like $- \circ rot$ on the set of increasing tuples from $X$ to $Y$.
\item $f$ behaves like $id,sw,rot$ or $rot^{-1}$ between decreasing tuples from $X$ to $Y$, or, $f$ behaves like $- \circ rot$ on the set of decreasing tuples from $X$ to $Y$.
\end{itemize}
\end{cor}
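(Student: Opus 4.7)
The plan is to reduce this corollary to \autoref{twoorbits} by splitting $Y$ according to its position relative to a fixed reference point in $X$. Fix some $x_0 \in X$ and set $Y_+ \defeq \{y \in Y : y > x_0\}$ and $Y_- \defeq \{y \in Y : y < x_0\}$. By canonicity of $f$, the behaviour of $f$ on increasing tuples from $X$ to $Y$ is determined (via the Important Observation on $2$-types) by its action on pairs $(x_0,y)$ with $y \in Y_+$, and likewise its behaviour on decreasing tuples is determined by its action on pairs $(x_0,y)$ with $y \in Y_-$. Handling each direction therefore amounts to running the proof of \autoref{twoorbits} with $\{x_0\} \cup Y_\pm$ in place of $\{x_0\} \cup Y$.

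First I would verify the structural condition flagged by the author as sufficient for the arguments of \autoref{twoorbits} to go through: for every finite digraph $\bar{a}$ and every designated vertex $a \in \bar{a}$, an isomorphic copy of $\bar{a}$ embeds into $\{x_0\} \cup Y_+$ with $a \mapsto x_0$, and similarly for $Y_-$. For this it suffices to show that $(Y_+, E|_{Y_+})$ and $(Y_-, E|_{Y_-})$ are each isomorphic to the generic digraph. This reduces to the digraph extension property inside $D$: given finite disjoint $U,V,W \subset Y_+$, one needs a witness $y \in D$ satisfying the edge/non-edge constraints with $U,V,W$, belonging to the orbit $Y$, and lying above $x_0$. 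These constraints are mutually consistent because $X$ and $Y$ are interdense, so \autoref{genericdigraph} (iii) produces such a $y$; the argument for $Y_-$ is symmetric.

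Having established this, the case analysis of \autoref{twoorbits} transcribes essentially verbatim. Because $f$ is canonical, its action on the $2$-types $p_{X,Y,<,E}$, $p_{X,Y,<,E^*}$, $p_{X,Y,<,N}$ is determined by its action on any one realization of each, in particular on $\{x_0\} \times Y_+$. Walking through the subcases of \autoref{twoorbits} then forces $f$, on increasing tuples, to behave like one of $id, sw, rot, rot^{-1}$, or like $- \circ rot_{(X,Y)}$, except in those subcases where a direct invocation of \autoref{containsgamma} or \autoref{equalsSymD} shows that $G$ must contain Aut$(\Gamma)$---which is excluded by hypothesis. Repeating the argument with $<$ replaced by $>$ and $Y_+$ replaced by $Y_-$ yields the second bullet for decreasing tuples.

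The only step with any content is the verification that the induced digraphs on $Y_+$ and $Y_-$ are themselves generic, and even that is immediate from interdensity plus the extension property. I therefore do not expect any genuine obstacle; the work is essentially bookkeeping on top of \autoref{twoorbits}.
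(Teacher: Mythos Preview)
Your proposal is correct and is precisely the paper's intended argument. The paper does not spell out a proof at all: immediately after \autoref{twoorbits} it isolates the single property needed---that every finite digraph with a designated vertex embeds into $\{x_0\}\cup Y$ with the designated vertex sent to $x_0$---and declares the interdense case an immediate corollary once one checks this property for $\{x_0\}\cup Y_+$ and $\{x_0\}\cup Y_-$; your write-up simply makes that check explicit.
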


For example, $f$ behaves like $sw$ between increasing tuples from $X$ to $Y$ means that $f(p_{X,Y,<,E})=p_{E^*}, (p_{X,Y,<,E^*})=p_{E}$ and $(p_{X,Y,<,N})=p_{N}$.  Another example is that when we say $f$ behaves like $- \circ rot$ on the set of decreasing tuples from $X$ to $Y$, we mean that $f$ behaves like $-$ on $X$ and $Y$, $f(p_{X,Y,>,E})=p_{N}, (p_{X,Y,>,E^*})=p_{E^*}$ and $(p_{X,Y,>,N})=p_{E}$.

The final situation is to look at how $f$ can behave between one of the constants, $c$ say, and the infinite orbits.

\begin{Def}: Let $c$ be one of the named constants of $(D,E,<,\bar{c})$ and let $X_1,X_2$ and $X_3$ be infinite orbits. If it is the case that we have outward edges from $c$ to $X_1$, inward edges from $c$ to $X_2$ and non-edges between $c$ and $X_3$, we called the triple $\bar{X}=(X_1,X_2,X_3)$ a $c$-generic triple. 
\end{Def}

The reason for introducing this definition is that there is nothing to be analysed about how $f$ behaves between $c$ and a \emph{single} orbit $X$. It is only useful to ask how $f$ behaves between $c$ and several infinite orbits, in particular, a $c$-generic triple. Note that if $\bar{X}$ is a $c$-generic triple, then $c \cup X_1 \cup X_2 \cup X_3$ is isomorphic to the generic digraph.

\begin{cor} Let $G$ be a closed supergroup of Aut$(D,E)$, let $f \in \tmcl(G)$ be a canonical function from $(D,E,<,\bar{c})$, let $c$ be one of the named constants and let $\bar{X}$ be a $c$-generic triple. Then (at least) one of the following holds:
\begin{itemize}
\item $f$ behaves like $id,sw,rot$ or $rot^{-1}$ between $c$ and $\bigcup \bar{X}$.
\item $f$ behaves like $-\circ rot_{(c,\bar{X})}$ on $\{c\} \cup \bigcup \bar{X}$.
\item $G$ contains Aut($\Gamma$).
\end{itemize}
\end{cor}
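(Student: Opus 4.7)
The plan is to reduce this to \autoref{twoorbits}. The decisive observation is that by the definition of a $c$-generic triple together with the universality of the generic digraph, the substructure $\{c\} \cup \bigcup \bar X$ is itself isomorphic to $(D,E)$: each $X_i$ is isomorphic to $(D,E)$, and between $c$ and $\bigcup \bar X$ all three edge relations ($E$, $E^*$, $N$) are realised by $X_1$, $X_2$, $X_3$ respectively. Consequently the universality condition identified at the end of the proof of \autoref{twoorbits} — that every finite rooted digraph $(\bar a, a)$ can be realised in $\{x_0\} \cup Y$ with $a$ mapped to $x_0$ — holds with $x_0 := c$ and $Y := \bigcup \bar X$.

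With this in hand, I would first apply \autoref{oneorbit} to each $X_i$ individually: unless $G \supseteq$ Aut$(\Gamma)$ (in which case we are done), $f$ behaves like $id$ or like $-$ on each $X_i$. Next, because $c$ is a single point, $c$ is either entirely below or entirely above each $X_i$ with respect to $<$, so for each $i$ there is a unique order-augmented inter-orbit 2-type; these types $p_{c,X_1,\phi,E}$, $p_{c,X_2,\phi,E^*}$, $p_{c,X_3,\phi,N}$ play the roles of $p_{X,Y,E}, p_{X,Y,E^*}, p_{X,Y,N}$ from \autoref{twoorbits}. Since the canonical behaviour of $f$ on 2-types between $c$ and $\bigcup \bar X$ is exactly specified by these three images, the case analysis of \autoref{twoorbits} (Cases 1a--1d and 2a--2f) transfers line-for-line, yielding in each case either $f$ behaving like $id$, $sw$, $rot$, or $rot^{-1}$ between $c$ and $\bigcup \bar X$, or $f$ behaving like $- \circ rot_{(c, \bar X)}$, or else a witness forcing $G \supseteq$ Aut$(\Gamma)$ via \autoref{equalsSymD}(iii) or \autoref{containsgamma}(ii).

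The only genuine subtlety — and hence the main obstacle to sanity-check — is that $f$ may behave like $id$ on some $X_i$ and like $-$ on another, whereas the analogous step in \autoref{twoorbits} fixed a single behaviour on $Y$. In the ``good'' output cases $id$, $sw$, $rot$, $rot^{-1}$ this non-uniformity is irrelevant, because each such conclusion only constrains how $f$ maps the three inter-orbit types. In the $- \circ rot_{(c, \bar X)}$ output case (arising as in subcase 2d(ii) of \autoref{twoorbits}), one verifies that uniform $-$-behaviour on all three $X_i$ is forced; in any remaining mixed configuration, either composing $f$ with a suitable element of $\langle - \rangle$ (applied on one $X_i$ and not on another via homogeneity of $(D,E,\bar{c})$) reduces matters to one of the earlier cases, or the mixture itself provides, for arbitrary finite $\bar a \subset D$, enough freedom either to delete an edge (giving $G = \mathrm{Sym}(D)$ by \autoref{equalsSymD}(iii)) or to align all edges in one direction (giving $G \supseteq$ Aut$(\Gamma)$ by \autoref{containsgamma}(ii)). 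Since this book-keeping is routine, the corollary follows.
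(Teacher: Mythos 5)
Your proposal is correct and follows essentially the same route as the paper: the paper derives this corollary by observing that the only property of $\{x_0\}\cup Y$ used in the proof of \autoref{twoorbits} is that every finite rooted digraph embeds there with the root at $x_0$, and that this holds with $x_0=c$ and $Y=\bigcup\bar{X}$ precisely because $\{c\}\cup\bigcup\bar{X}$ is (isomorphic to) the generic digraph. You are in fact more careful than the paper, which leaves the mixed $id$/$-$ behaviour across the three orbits $X_i$ entirely implicit; your observation that any genuinely mixed configuration already forces $G\supseteq\mathrm{Aut}(\Gamma)$ is the right way to discharge it.
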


%NEW SUBSECTION

\subsection{Using canonical functions}
With this analysis, we are now in a position to prove the remaining lemmas.

\begin{proof}[Proof of \autoref{threeregions}] We recall that we want to show that if $G$ is a reduct of $(D,E)$, then $G$ either contains Aut$(\Gamma)$, is contained in Aut$(\Gamma)$, or contains $\langle rot \rangle$.  Suppose none of these are true - we will derive a contradiction.

$G$ is not contained in Aut$(\Gamma)$, which means that $G$ does not preserve non-edges. Hence, there is $f \in G$ and an edge $c_1c_2 \in D$ such that $f(c_1c_2)$ is a non-edge. We apply \autoref{blackbox} to obtain a canonical $g: (D,E,<,c_1,c_2) \to (D,E)$ which agrees with $f$ on $c_1$ and $c_2$.

By \autoref{oneorbit}, for any infinite orbit $X$, $g$ behaves like $id$ or $-$ on it - otherwise, $G$ would contain Aut($\Gamma$), contradicting our assumptions. By \autoref{twoorbits} and its corollaries, we have that $g$ behaves like $id$ or $sw$ between orbits - otherwise, $G$ would contain either Aut($\Gamma$) or $\langle rot \rangle$, contradicting our assumptions.

But now we have a function $g \in \tmcl(G)$ which deletes an edge (namely,$c_1c_2$) and maps all non-edges to non-edges. By imitating the proof of part (iv) of \autoref{equalsSymD}, we conclude that $G$ equals Sym$(D)$, contradicting that $G$ does not contain Aut($\Gamma$). 
\end{proof}

\vspace{5mm}
\begin{proof}[Proof of \autoref{region2}] By \autoref{region2a} and \autoref{region2b}, it remains to be proved that if $\langle sw,- \rangle < G \leq$ Aut($\Gamma)$ is a closed group, then $G=$ Aut$(\Gamma)$.

So let $G$ be such a closed group and suppose, for contradiction, that $G \neq$ Aut$(\Gamma)$. Since $\langle sw,- \rangle < G$, $G$ does not preserve $P_{sw,w}$, there exists $f \in G$ and $\bar{c} \in D$ such that $P_{sw,w}(\bar{c})$ and $\neg P_{sw,w}(f(\bar{c}))$. Now use \autoref{blackbox} to obtain $g \in \tmcl(G)$ which is canonical from $(D,E,<,\bar{c})$ and which agrees with $f$ on $\bar{c}$.

As in the previous proof, we use $\autoref{oneorbit}$ to conclude that for any infinite orbit $X$, $g$ behaves like $id$ or $-$ on $X$, and we use $\autoref{twoorbits}$ and its corollaries to conclude that $g$ behaves like $id$ or $sw$ between orbits.

\textbf{Claim 1.} $g$ behaves like $id$ on all infinite orbits, or, $g$ behaves like $-$ on all infinite orbits.

\emph{Proof of Claim 1.} Suppose not, so there exists infinite orbits $X$ and $Y$ such that $g$ behaves like $id$ on $X$ and like $-$ on $Y$. There are now two cases. The first case is if $g$ behaves like $id$ between $X$ and $Y$. In this case, by imitating the proof of part (iii) of \autoref{containsgamma}, we conclude that $G \geq$ Aut$(\Gamma)$ - contradiction. The second case is if $g$ behaves like $sw$ between $X$ and $Y$. This reduces to the previous case by considering $- \circ g$: $-\circ g$ behaves like $-$ on $X$, like $id$ on $Y$ and like $id$ between $X$ and $Y$.  Thus, we always reach a contradiction, proving the claim.

In fact, by considering $- \circ g$ if necessary, we may now assume that $g$ behaves like $id$ on all infinite orbits.

Enumerate the (finite number of) infinite orbits as $X_1,X_2,X_3,\ldots$.

\textbf{Claim 2.} We may assume $g$ behaves like $id$ between all pairs of orbits from $X_1,X_2,X_3$.

\emph{Proof of Claim 2.} If $g$ behaves like $id$ between all three orbits, we are done. If $g$ behaves like $sw$ between precisely two of the pairs - without loss $g$ behaves like $sw$ between $X_1$ and $X_2$, and between $X_1$ and $X_3$ - then by switching about $X_1$ we may assume $g$ behaves like $id$, so again we are done.  If $g$ behaves like $sw$ between precisely one pair of infinite orbits, then by imitating the proof of \autoref{containsgamma} (iii), we get that $G \geq$ Aut$(\Gamma)$, which is a contradiction. The final possibility is that $g$ behaves like $sw$ between all pairs: this reduces to the third case by switching about $X_1$, so we again get a contradiction.  Thus, we have proved the claim.

\textbf{Claim 3.} We may assume $g$ behaves like $id$ between all infinite orbits.

\emph{Proof of Claim 3.} First consider how $g$ behaves between $X_4$ and the first three $X_i$'s. If $g$ behaves like $id$ between $X_4$ and all the previous $X_i$'s, we move on. If $g$ behaves like $sw$ between $X_4$ and all the previous $X_i$'s, we switch about $X_4$, reducing to the first case. The last case is if, without loss, $g$ behaves like $id$ between $X_4$ and $X_1$ and like $sw$ between $X_4$ and $X_2$. But this is exactly the same as the contradictory case in the proof Claim 2, so this is not possible. Hence, we have shown that $g$ must behave like $id$ between all the pairs in $X_1,\ldots,X_4$.

One then moves on to $X_5$ and repeats this argument to show that we may assume $g$ behaves like $id$ between $X_1,\ldots X_5$. Continuing in this fashion proves the claim.

\textbf{Claim 4.} We may assume that $g$ behaves like $id$ between $\bar{c}$ and the infinite orbits.

Consider $c_1 \in \bar{c}$, and let $\bar{X}$ and $\bar{Y}$ be $c_1$-generic.  Suppose that $g$ behaves like $id$, resp. $sw$ between $c$ and $\bigcup \bar{X}$, resp. $\bigcup \bar{Y}$.  Then, for any finite digraph $A$ and edge $aa' \in A$, we can map $aa'$ to an edge between $c_1$ and $\bar{Y}$ and map the remaining vertices of $A$ into $\bar{X}$. Then applying $g$ will have the effect of switching precisely the single edge $aa'$. Thus, by imitating the argument of \autoref{containsgamma} (iii), we get that $G \geq$ Aut$(\Gamma)$, a contradiction.  Hence, it must be the case $g$ behaves like $id$ between $c_1$ and all the $c_1$-generic triples, or, $g$ behaves like $sw$ between $c_1$ and the triples.  But in the latter case, we can apply $sw_{c_1}$ to reduce to the former case.  

Repeating this for the other $c_i$ will complete the proof of the claim.

Observe that all the manipulations we (may have) used on $g$ have been applications of $-$ or $sw$.  This ensures that $g(\bar{c} \notin P_{sw,w}$. In particular, there is at least one edge in $\bar{c}$ whose direction $g$ switches.  Combining this observation with all the claims tells us that we are in the situation of \autoref{containsgamma} (iii): $g$ behaves like $id$ everywhere except on the finite set $\bar{c}$. Hence, $G \geq$ Aut$(\Gamma)$, giving us a contradiction, thus completing the proof.
\end{proof}

\begin{proof}[Proof of \autoref{region3}] We recall the statement of the lemma: If $G$ contains $\langle rot \rangle$, then $G$ equals $\langle rot \rangle, \langle -,rot \rangle$ or Sym$(D)$.

To prove the statement, it suffices to prove the following: \begin{enumerate}[(i)]
\item If $G > \langle rot \rangle$ and $G \not\geq \langle -,rot \rangle$, then $G=$ Sym$(D)$.
\item If $G > \langle - ,rot \rangle$, then $G=$ Sym$(D)$.
\end{enumerate}

Before continuing, recall that $\langle sw,rot \rangle =$ Sym$(D)$. Hence, we may assume in all that follows that $sw \notin G$.

(i) Suppose $G > \langle rot \rangle$ and $G \not\geq \langle -,rot \rangle$. The latter assumption implies that $- \notin G$. Then there exists $\bar{c}$ and $f \in G$ which witness the fact that $G$ does not preserve $P_{rot,1}$. Then use \autoref{blackbox} to obtain a canonical $g: (D,E,<,\bar{c}) \to (D,E)$ which agrees with $f$ on $\bar{c}$. 

By \autoref{oneorbit}, $g$ behaves like $id$ on all infinite orbits, as otherwise $G$ contains $sw$ or $-$. Similarly, by \autoref{twoorbits} and its corollaries, we have that $g$ behaves like $id$ or $rot$ between orbits.

We proceed in a similar fashion to the proof of \autoref{region2}.

\textbf{Claim 1.} We may assume that $g$ behaves like the $id$ between all infinite orbits.

\emph{Proof of Claim 1.} Let $X_1,X_2,\ldots$ enumerate the infinite orbits.  If $g$ behaves like $id$ between $X_1$ and $X_2$ then move on. Otherwise, $g$ behaves like $rot$ or $rot^{-1}$ between $X_1$ and $X_2$. Hence, by composing with a rotation about $X_1$ or $X_2$ as appropriate, we can assume $g$ behaves like $id$ between $X_1$ and $X_2$.

Now consider $X_3$. Again by composing with a rotation if necessary, we may assume that $g$ behaves like $id$ between $X_1$ and $X_3$. Suppose that $g$ does not behave like $id$ between $X_2$ and $X_3$ - we will show that $G$ must equal Sym$(D)$.  Without loss, $g$ behaves like $rot$ between $X_2$ and $X_3$. Given any finite digraph $A$ and an edge $aa'$ in $A$, we can find a copy of $A$ in $D$ such that $aa'$ is an inward edge from $X_2$ to $X_3$ and such that the other vertices of $A$ all lie in $X_1$. Applying $g$ to this copy results in the edge $aa'$ being deleted, with the rest of $A$ being the same.  Hence, by \autoref{equalsSymD} (iii), we conclude that $G=$ Sym$(D)$.  Therefore, we may assume that $g$ behaves like $id$ between $X_1,X_2$ and $X_3$.

We then consider $X_4$. Using an identical argument, we can show that if $g$ does not behave like $id$ between $X_4$ and the other orbits, then $G=$ Sym$(D)$, so we may assume $g$ behaves like $id$.  Continuing in this fashion proves the claim.

\textbf{Claim 2.} We may assume that $g$ behaves like $id$ between $\bar{c}$ and all the infinite orbits.

\emph{Proof of Claim 2.} First work with $c_1$. Let $\bar{X}$ be a $c_1$-generic triple. We know that $g$ behaves like $id$, $rot$ or $rot^{-1}$ between $c$ and $\bar{X}$, so by composing with a rotation if necessary, we may assume that $g$ behaves like $id$.  Now consider another $c_1$-generic triple $\bar{Y}$. If $g$ does not behave like $id$ between $c_1$ and $\bar{Y}$, then we use the same argument as in the proof of Claim 1 to show that $G$ equals Sym$(D)$. So we may assume that $g$ behaves like $id$ between $c_1$ and all infinite orbits.

Repeating this for $c_2$ and $c_3$ completes the proof of the claim.

Because all the possible modifications of $g$ were compositions with rotations, we still have $\neg P_{rot,1}(g(\bar{c}))$.  But that means we have $g$ acting like $id$ everywhere except on $\bar{c}$.  Hence, $g$ must either switch an edge or delete an edge in $\bar{c}$, but we do not know which. In either case, using \autoref{containsgamma} or \autoref{equalsSymD} as appropriate, we get that $G \geq$ Aut$(\Gamma)$, so $G$ contains $sw$, so $G=$ Sym$(D)$. This completes the proof of (i).

(ii) Suppose $G > \langle rot,- \rangle$. Then there exists $\bar{c}$ and $f \in G$ which witness the fact that $G$ does not preserve $P_{rot,w}$. Then use \autoref{blackbox} to obtain a canonical $g: (D,E,<,\bar{c}) \to (D,E)$ which agrees with $f$ on $\bar{c}$. 

By \autoref{oneorbit}, $g$ behaves like $id$ or $-$ on all infinite orbits, and by \autoref{twoorbits} and its corollaries, we have that $g$ behaves like $id$ or $rot$ between orbits, or like $-\circ rot$ on the union of the two orbits.

\textbf{Claim 1'.} We may assume that $g$ acts like $id$ on all infinite orbits or like $-$ on all infinite orbits.

\emph{Proof of Claim 1'.} Suppose not. so there are infinite orbits $X$ and $Y$ such that $g$ acts like $id$ on $X$ and $-$ on $Y$. There are two options for how $g$ behaves between $X$ and $Y$, like $id$ or like $rot$. If $g$ behaves like $id$, then we imitate the idea in \autoref{containsgamma} (iii) to show that $G \geq$ Aut$(\Gamma)$, so then $G$ must equal Sym$(D)$. If $g$ behaves like $rot$ between $X$ and $Y$, just compose with $rot_{X}$ to reduce to the first option. This completes the proof of the claim.

Now composing with $-$ if necessary, we may assume that $g$ behaves like $id$ on all infinite orbits.

What are the possible ways $g$ can behave between an infinite and another orbit?  There are three options: like $id$, like $rot$, or, like $-\circ rot$.  This last option is possible as it may have been the case that $g$ originally behaved like $rot$ between the two orbits, so after applying $-$ we get that it behaves like $-\circ rot$. But what does it mean to behave like $- \circ rot$ between two orbits?  It means that you swap outward edges with non-edges, while preserving inward edges the same. This means that we would be in Case 2di of \autoref{twoorbits}, where we showed that $G$ must then equal Sym$(D)$.

Hence, we may assume that $g$ acts like $id$ on all infinite orbits, and that $g$ behaves like $rot$ or $id$ between an infinite orbit and any other orbit.  But this is exactly the same situation as in part (i) of this proof, so we just repeat the argument. This completes the proof.
\end{proof}

%NEW SECTION
%NEW SECTION

\section{Summary and Open Questions}
We summarise the structure of the proof of the main theorem, \autoref{maintheorem}, which states that $\mathcal{L}$ is the lattice of the reducts of the generic digraph.  The first task is to show that $\mathcal{L}$ is a sublattice of the reducts of the generic digraph, which was done in \autoref{sublattice}.

The second task is to show that $\mathcal{L}$ contains all the reducts. By \autoref{threeregions}, which was proved using canonical functions at the start of Section 5.3, the task is split up into three regions of $\mathcal{L}$: The region above Aut$(\Gamma)$, the region below Aut$(\Gamma)$, and the rest.  The region above Aut$(\Gamma)$ is immediately dealt with by Thomas' classification of the reducts of $\Gamma$.  The proof of the region below Aut$(\Gamma)$, \autoref{region2}, has two parts. The first part is in Section 5.1, where we use the function $T(G)$ and the classification for the random tournament, and the second part is in Section 5.3. The final region, \autoref{region3}, is proved using canonical functions at the end of Section 5.3.

We end by stating some problems of interest in this area. There is the obvious task of determining the reducts of your favourite structure(s), but some more specific questions are:
\begin{itemize}
\item (Thomas' Conjecture): If a structure is homogeneous in a finite relational language, then it only has finitely many reducts.
\item Which lattices can be realised as the lattice of reducts of some structure?
\item Is there always a maximal closed group between a closed group $G$ and Sym$(M)$ (where $M$ is countable)?
\end{itemize}

The answer to the first question may be related to a question in structural Ramsey theory: Given a homogeneous structure, can you finitely extend its language so that the structure becomes Ramsey? For example, it may be easier to prove the conjecture is true for Ramsey structures, and this may be sufficient to prove the full conjecture.  Alternatively, a counterexample for one question may lead to a counterexample of the other.

Another angle on the second question could be to consider whether there is any relationship between structures which have the same lattice of reducts. For example, it is curious that $(\mathbb{Q},<)$, the random graph, the random tournament and the generic partial order have the same 5-element lattice as their lattice of reducts.

For clarification of the third question, we say that a closed group $F<$ Sym$(M)$ is maximal if there are no closed groups $F'$ such that $F < F' <$ Sym$(M)$. To find a counterexample to this question, it is sufficient to find an $\aleph_0$-categorical countable structure such that all of its non-trivial reducts have infinitely many reducts.  We remark that such a structure without the condition of being $\aleph_0$-categorical is known: $(\mathbb{Z}, \{(x,y): |x-y|=1\})$.  The relations definable in this structure are analysed in \cite{ss12}, and it follows that all of its non-trivial reducts have infinitely many reducts.

\bibliographystyle{alpha}
\bibliography{bibliography}

\end{document}